\documentclass{article}
\usepackage{iclr2027_conference,times}

\usepackage{amsmath,amssymb,amsthm,mathtools}

\usepackage{mathrsfs}
\usepackage{hyperref}
\usepackage{url}

\usepackage[capitalise]{cleveref}

\usepackage{pgfplots}
\usepackage{pgfplotstable}
\usepackage{mathtools}
\pgfplotsset{compat=1.18}
\usepackage{subcaption}

\definecolor{pepSimGDA}{RGB}{213,94,0}
\definecolor{behaviorGap}{RGB}{0,97,65}
\definecolor{lmiFull}{RGB}{0,114,178}
\pgfplotsset{
    pep figure axis/.style={
        width=\linewidth,
        height=0.72\linewidth,
        grid=major,
        grid style={draw=gray!20, line width=0.1pt},
        axis line style={black!70},
        tick style={black!70},
        tick label style={font=\footnotesize},
        label style={font=\footnotesize},
        legend style={font=\scriptsize, draw=black!45, fill=white,
                      cells={anchor=west}},
        legend pos=north east,
        mark options={solid},
        every axis plot/.append style={line width=0.9pt}
    }
}

\theoremstyle{plain}
\newtheorem{theorem}{Theorem}
\newtheorem{lemma}{Lemma}

\theoremstyle{remark}
\newtheorem{remark}{Remark}
\theoremstyle{definition}
\newtheorem{example}{Example}

\newcommand{\R}{\mathbb{R}}
\newcommand{\one}{\mathbf{1}}
\newcommand{\supp}{\operatorname{supp}}
\newcommand{\Gap}{\operatorname{Gap}}
\newcommand{\ip}[2]{\left\langle #1,#2\right\rangle}

\iclrfinalcopy

\title{AltGDA Achieves Global $O(1/T)$ Ergodic Convergence in Matrix Games}

\author{\parbox[t]{\dimexpr0.5\textwidth-2\tabcolsep\relax}{
  \normalfont\raggedright
  \textbf{Tianlong Nan}\\
  IEOR Department, Columbia University\\
  New York, NY 10027, USA\\
  \texttt{tianlong.nan@columbia.edu}
}
\And
\parbox[t]{\dimexpr0.5\textwidth-2\tabcolsep\relax}{
  \normalfont\raggedright
  \textbf{Garud Iyengar}\\
  IEOR Department, Columbia University\\
  New York, NY 10027, USA\\
  \texttt{garud@ieor.columbia.edu}
}
\AND
\parbox[t]{\dimexpr0.5\textwidth-2\tabcolsep\relax}{
  \normalfont\raggedright
  \textbf{Christian Kroer}\\
  IEOR Department, Columbia University\\
  New York, NY 10027, USA\\
  \texttt{christian.kroer@columbia.edu}
}
\And
\parbox[t]{\dimexpr0.5\textwidth-2\tabcolsep\relax}{
  \normalfont\raggedright
  \textbf{Shuvomoy Das Gupta}\\
  CMOR Department, Rice University\\
  Houston, TX 77005, USA\\
  \texttt{sd158@rice.edu}
}
}

\hypersetup{
  pdfborder={0 0 0},
  pdftitle={AltGDA Achieves Global O(1/T) Ergodic Convergence in Matrix Games},
  pdfauthor={Tianlong Nan, Garud Iyengar, Christian Kroer, Shuvomoy Das Gupta}
}

\begin{document}

\maketitle
\lhead{\(O(1/T)\) Ergodic Convergence of AltGDA}
\thispagestyle{plain}

\begin{abstract}
Alternating gradient descent--ascent (AltGDA) is a simple and practically effective method for solving finite two-player zero-sum matrix games. However, the theory of AltGDA remains limited: existing results either apply only to unconstrained settings or require restrictive assumptions on the equilibrium in constrained settings. We show that AltGDA converges globally at an $O(1/T)$ ergodic rate in every finite two-player zero-sum matrix game. Unlike prior results, our guarantee holds for every initialization and every horizon \(T\): the uniform averages of the AltGDA iterates satisfy an \(O(1/T)\) duality-gap bound. Our proof is inspired by numerical results obtained using a novel performance estimation programming (PEP) framework for Lyapunov function search over compact convex sets. Additionally, we provide simple counterexamples showing that the last-iterate duality gap of AltGDA does not converge to zero. This justifies why averaging of iterates is indeed necessary to achieve an \(O(1/T)\) rate. We have formalized and machine-checked our global ergodic convergence result in Lean 4.
\end{abstract}

\section{Introduction}
\label{sec:intro}

Game-theoretic learning algorithms underpin systems that attain superhuman play in poker~\citep{bowling2015heads,moravvcik2017deepstack,brown2018superhuman,brown2019superhuman}
as well as expert-level play in Stratego~\citep{perolat2022mastering}, and Diplomacy~\citep{meta2022human}.
A canonical model for learning in games is the two-player zero-sum matrix game, where two players select their strategies probabilistically to optimize an expected payoff, with one player minimizing and the other maximizing. 

The simplest learning dynamics is simultaneous gradient descent-ascent (SimGDA), where both players run projected gradient steps on their own strategies. SimGDA is known to achieve an $O(1/\sqrt{T})$ ergodic convergence rate in the duality gap. In this paper, we study the \emph{alternating} variant of gradient descent-ascent (AltGDA), where the players take turns updating their strategies, while always observing the newly-updated strategy of the opposing player. Empirically, AltGDA exhibits much better performance in solving finite two-player zero-sum matrix games compared to SimGDA, yet our theoretical understanding of AltGDA remains limited. Our question is whether the AltGDA dynamics, without any optimism or other tricks, admit a global $O(1/T)$ ergodic guarantee.

The question is delicate because plain gradient play in adversarial problems need not settle. In unconstrained bilinear games, simultaneous gradient dynamics diverges away from the equilibrium, and the alternating variant cycles~\citep{bailey2020finite}; 
on the simplex, adversarial regularized learning cycles as well~\citep{mertikopoulos2018cycles}. 
The classical remedies modify the dynamics through extragradient steps~\citep{korpelevich1976extragradient} 
or optimistic corrections~\citep{popov1980modification,rakhlin2013online}.  The stronger last-iterate guarantees known today are for those modified dynamics~\citep{wei2021last}.

Alternation itself has a long practical record. 
It entered large-scale game solving as a numerical device in Counterfactual Regret Minimization+ (CFR+)~\citep{tammelin2015solving}, the algorithm used for superhuman poker AI. Alternation provably does not hurt the theoretical guarantees~\citep{burch2019revisiting,farina2019online}. 
On random matrix games, numerical studies in \citep{kroer2020ieor8100_note5,nan2026convergence} suggest an ergodic $O(1/T)$ convergence rate for AltGDA. 
On the theory side, \citep{wibisono2022alternating,katona2024symplectic} show that alternation provably helps for alternating \emph{mirror} descent with Legendre regularizers. Legendre regularizers guarantee that the dynamics never touch the boundaries of the simplices.
In contrast AltGDA's Euclidean projections cause collisions with the simplex boundary, which is the main difficulty in a theoretical analysis.
\citet{nan2026convergence} sidestep this boundary issue by assuming that the game contains an interior equilibrium, or that the dynamics are initialized close to an equilibrium, and show $O(1/T)$ convergence for these special cases.

\paragraph{Contributions.}  Our contributions in this paper are threefold:

\begin{itemize}

\item \textbf{A global \(O(1/T)\) ergodic convergence result for AltGDA
  (\cref{sec:main-conv-altgda}, Appendix~\ref{sec:appendix-proof}).}
  We show that for every finite matrix game, every initialization, and every iteration budget \(T\), the uniform averages
  \((\overline x^T,\overline y^T)\) of the AltGDA iterates satisfy
  \(\Gap(\overline x^T,\overline y^T)\leq15/(2\eta T)\)
  for every fixed stepsize
  \(\eta \in (0, \tfrac{\delta}{2\sqrt2\,L}]\), where \(L>0\) bounds the
  spectral norm of the payoff matrix and \(\delta\in(0,1]\)
  is a positive constant associated with a Goldman-Tucker
  saddle point. 
  Our result provides the first global guarantee without
  any restrictions, showing that alternation alone suffices
  for an \(O(1/T)\) ergodic rate in general matrix games.
  We have formalized and machine-checked our convergence
  results in Lean~4 \citep{lean4}.

\item \textbf{A computer-assisted framework for Lyapunov-function discovery in constrained bilinear games
  (\cref{sec:lmi}, Appendix~\ref{sec:appendix-lmi}).}
  We present a computer-assisted framework based on performance estimation programming (PEP) that transforms the search for quadratic Lyapunov certificates for AltGDA into a linear matrix inequality problem (LMI). The framework combines computer-assisted automated Lyapunov analysis \citep{upadhyaya2025automated,upadhyaya2025autolyap}  with operator
  interpolation \citep{bousselmi2024interpolation} to construct an LMI over general compact convex bilinear problems, such that 
  exact feasibility of this LMI certifies a quadratic Lyapunov certificate leading to an \(O(1/T)\) ergodic
  convergence rate. We demonstrate numerically that this LMI is feasible subject to the tolerance of the SDP solver and the patterns present in computed numerical solutions guided us in deriving the analytical convergence results in the first contribution.

\item \textbf{The last-iterate duality gap of AltGDA does not converge to zero
  (\cref{sec:pep}, Appendix~\ref{sec:appendix-pep}).} Our first two contributions involve the ergodic duality gap, so a natural next question is if we can achieve an $O(1/T )$ convergence rate for AltGDA in last-iterate duality gap. We provide simple counterexamples showing that the last-iterate duality gap of AltGDA does not converge to zero both in matrix games and in bilinear problems over general compact convex sets. First, for compact convex sets, we show a simple $2$-dimensional example over the unit ball, where the last-iterate duality gap is exactly two for every iterate and every stepsize choice. Second, for the special case of matrix games, we present a \(2\times2\) matrix such that the last-iterate duality gap is bounded away from zero no matter how its stepsize is tuned. Our explicit counterexamples are motivated again by a PEP-based framework  that numerically computes the worst-case last-iterate duality gap of AltGDA over compact convex sets.

\end{itemize}

\paragraph{Julia and Lean code.}
For scientific reproducibility, we provide the Lean code for verifying our
global ergodic convergence result and the Julia code for reproducing our
numerical PEP results at:
\begin{center}
\url{https://github.com/Shuvomoy/AltGDA-code}
\end{center}
Unless otherwise specified, we conducted the experiments on a laptop running macOS Tahoe
with an Apple M5 Max processor and 128 GB of memory. The Lean verification 
uses Lean 4.32.0 and mathlib 4.32.0.

\section{Related Work}

\paragraph{Alternation in unconstrained minimax optimization.}
In unconstrained bilinear games,
\citet{bailey2020finite} showed that AltGDA with fixed stepsizes achieves constant alternating regret against any fixed comparator,
yielding \(O(1/T)\) average regret.
For smooth strongly convex-strongly concave objectives,
\citet{zhang2022near} established near-optimal local linear convergence
with better condition-number dependence than simultaneous updates.
\citet{lee2024fundamental} subsequently established a global advantage 
by proving an iteration-complexity upper bound for AltGDA below the
corresponding lower bound for simultaneous GDA.
\citet{feng2025continuous} showed that smaller heavy-ball momentum
improves local stability in continuous-time models.

\paragraph{Alternation on constrained strategy sets.}
For constrained bilinear games, \citet{wibisono2022alternating}
established an \(O(T^{-2/3})\) ergodic rate for alternating mirror
descent, which \citet{katona2024symplectic} improved to
\(O(T^{-4/5})\) through symplectic analysis.
These guarantees rely on Legendre regularizers and smoothness
assumptions that do not cover Euclidean projection onto simplices.
For projected AltGDA in matrix games, \citet{nan2026convergence}
proved an ergodic \(O(1/T)\) rate for sufficiently small constant
stepsizes, globally when an interior equilibrium exists and locally
without this condition.
Our theorem removes the interiority requirement, extending the global
guarantee to every finite matrix game and every initialization.
In online learning, \citet{cevher2023alternation} constructed
algorithms attaining \(\widetilde{O}(T^{1/3})\) alternating regret
for linear losses on simplices.
\citet{hait2025alternating} extended these guarantees to bounded
convex losses using continuous Hedge, with
\(\widetilde{O}(d^{2/3}T^{1/3})\) alternating regret in dimension \(d\).
The benefits of alternation depend on the learning rule:
\citet{lazarsfeld2025optimism} showed that, in fictitious play,
optimism can achieve constant regret without regularization,
whereas alternation alone need not.

\paragraph{Optimistic first-order methods in game solving.}
Usually, optimistic-type of modifications such as extrapolation and prediction are used in first-order methods for saddle-point optimization to achieve fast convergence rate.
Classical examples include the extragradient method~\citep{korpelevich1976extragradient}, 
Mirror-Prox~\citep{nemirovski2004prox}, 
dual extrapolation~\citep{nesterov2007dual}, 
and primal--dual splitting~\citep{chambolle2011first}.
The latter three achieve $O(1/T)$ ergodic convergence for various convex-concave problem classes.
In online learning, optimistic mirror descent and optimistic
follow-the-regularized-leader exploit predictability in the observed losses to obtain faster convergence in games~\citep{rakhlin2013optimization,syrgkanis2015fast}.
\citet{mokhtari2020unified} provide a unified analysis of extragradient and optimistic gradient descent--ascent (OGDA) as approximations of the proximal-point method.
A complementary line of work studies the last-iterate behavior of
optimistic methods~\citep{daskalakis2017training,mertikopoulos2018optimistic,daskalakis2018last}.
In particular, \citet{wei2020linear} establish linear last-iterate
convergence of OGDA in bilinear games over polytopes without assuming equilibrium uniqueness, and of optimistic multiplicative weights in matrix games with a unique equilibrium.
Finite-time last-iterate guarantees for extragradient and OGDA
also extend to constrained monotone games~\citep{cai2022finite}.
Our result shows that the $O(1/T)$ ergodic rate can also be attained through \emph{alternation alone}: 
the uniform average of the unmodified AltGDA iterates converges at the $O(1/T)$ rate in every finite matrix game, 
without additional correction, extrapolation, or prediction terms.

\paragraph{Performance estimation programming and Lyapunov analysis.}
The PEP framework was introduced by \citet{drori2014performance}
and developed through exact interpolation formulations by
\citet{taylor2017exact,taylor2017smooth}.
Subsequently, \citet{bousselmi2024interpolation} derived interpolation
conditions for linear operators, while
\citet{upadhyaya2025automated,upadhyaya2025autolyap} developed automated
Lyapunov analysis and corresponding software AutoLyap.
For AltGDA, \citet{nan2026convergence} used PEP to optimize stepsizes
against worst-case ergodic bounds. In contrast, we combine operator interpolation and automated Lyapunov analysis
to search for certificates, with the numerical results interpreted
through approximate feasibility in \cref{sec:lmi}.
We also study finite-horizon last-iterate performance over compact
convex sets in
\cref{sec:pep}.

\section{Preliminaries}
\label{sec:setting}

\paragraph{Bilinear saddle-point problems.}
In this paper we consider the following bilinear saddle point problems (SPP) of the form:
\begin{equation}
    \min_{x\in\mathcal X}\max_{y\in\mathcal Y}y^\top Ax,
    \label{eq:game-main} \tag{SPP}
\end{equation}
where \(\mathcal X\subseteq\R^n\) and \(\mathcal Y\subseteq\R^m\)
are nonempty compact convex sets and \(A\in\R^{m\times n}\).
Here \(\R^d\) denotes the \(d\)-dimensional Euclidean space.
We say that a feasible pair \((x^\star,y^\star) \in \mathcal{X} \times \mathcal{Y}\)  is a \emph{saddle point} if
\[
y^\top Ax^\star\leq(y^\star)^\top Ax^\star
\leq(y^\star)^\top Ax
\text{ for all }x\in\mathcal X,\ y\in\mathcal Y.
\]
Compactness, convexity, and bilinearity ensure that a saddle point
exists and that the two optimization orders have the common value
\[
\nu=\min_{x\in\mathcal X}\max_{y\in\mathcal Y}y^\top Ax
   =\max_{y\in\mathcal Y}\min_{x\in\mathcal X}y^\top Ax.
\]
For a feasible pair \((\widetilde x,\widetilde y) \in \mathcal{X} \times \mathcal{Y} \), define its
\emph{duality gap} by
\begin{align}
    \Gap(\widetilde x,\widetilde y)
    &:=\max_{y\in\mathcal Y}y^\top A\widetilde x
       -\min_{x\in\mathcal X}\widetilde y^\top Ax
      =\max_{\substack{x\in\mathcal X,\ y\in\mathcal Y}}
       \left(y^\top A\widetilde x-\widetilde y^\top Ax\right).
    \label{eq:gap-main}
\end{align}
This gap is always nonnegative on \(\mathcal X\times\mathcal Y\) and vanishes exactly at
saddle points.

\paragraph{Matrix games.}
Finite two-player zero-sum matrix games are the specialization of \eqref{eq:game-main} with 
\(\mathcal X=\Delta_n\), \(\mathcal Y=\Delta_m\), where
\(\Delta_d=\{z\in\R^d:z\geq0,\ \one^\top z=1\}\) is the
probability simplex. The \(x\)-player mixes over \(n\) actions to
minimize the expected payoff, and the \(y\)-player mixes over \(m\)
actions to maximize it. A saddle point is then a \emph{Nash equilibrium}
(NE); every finite matrix game has at least one~\citep{vonNeumann1928}.

\paragraph{Alternating GDA.}
Let \(\Pi_C\) denote Euclidean projection onto a nonempty closed
convex set \(C\). From an initialization
\((x^0,y^0)\in\mathcal X\times\mathcal Y\), alternating gradient
descent--ascent (AltGDA) with a fixed stepsize \(\eta>0\) generates the iterates
\begin{equation}
    \begin{aligned}
        x^{t+1}&=\Pi_{\mathcal X}\!\left(x^t-\eta A^\top y^t\right),\\
        y^{t+1}&=\Pi_{\mathcal Y}\!\left(y^t+\eta Ax^{t+1}\right),
    \end{aligned}
    \tag{AltGDA}
    \label{eq:altgda-main}
\end{equation}
for \(t=0,1,\ldots, T-1\), where $T \geq 1$ is the iteration budget. The \(x\)-update is a projected descent step;
the \(y\)-update is a projected ascent step using the newly computed
\(x^{t+1}\). At the time horizon \(T\), we distinguish the
last iterate \((x^T,y^T)\) from the uniform averages
\(\overline x^T=\frac1T\sum_{t=1}^T x^t\) and
\(\overline y^T=\frac1T\sum_{t=1}^T y^t\).

For the remainder of this section and the global ergodic convergence analysis, we specialize to \(\mathcal X=\Delta_n\) and
\(\mathcal Y=\Delta_m\) associated with \emph{AltGDA for matrix games}.

\paragraph{Goldman-Tucker strictly complementary saddle point.}
A saddle point is \emph{strictly complementary} if, 
at each coordinate,
either the equilibrium strategy has a strictly positive probability 
or the corresponding payoff slack is strictly positive. 
For a saddle point $(x^\star, y^\star)$, 
we define its payoff slack vectors as 
\begin{equation}
    s_x = A^\top y^\star - \nu\one \geq 0, \quad
    s_y = \nu\one-Ax^\star \geq 0.
    \label{eq:slacks-main}
\end{equation}
Then, $(x^\star, y^\star)$ is called a strictly complementary saddle point if 
\begin{equation}
    x_i^\star s_{x,i}=0, \quad 
    x_i^\star + s_{x,i}>0, \quad
    y_j^\star s_{y,j}=0, \quad 
    y_j^\star + s_{y,j}>0,
    \label{eq:strict-complementarity-main}
\end{equation}

for $i=1,2,\ldots,n$ and $j = 1,2,\ldots,m$. By the Goldman-Tucker theorem~\citep{GoldmanTucker1956,Williams1970}, 
every matrix game admits a strictly complementary saddle point. 
We fix one such pair and call it the \emph{Goldman-Tucker strictly complementary saddle point}. 

Let $L > 0$ be an upper bound of the spectral norm of $A$: $\lVert A\rVert_2 \leq L$.
Let $(x^\star, y^\star)$ be the Goldman-Tucker strictly complementary saddle point we choose, and its payoff slack vectors defined as in~\cref{eq:slacks-main}. 
Define the supports: 
$I := \supp(x^\star)$, 
$J := \supp(y^\star)$, and 
the smallest on-support probabilities: 
$\underline{x} := \min_{i \in I} x^\star_i > 0$ and 
$\underline{y} := \min_{j \in J} y^\star_j > 0$.
When the complements of the supports are nonempty, we denote the smallest off-support slacks by 
$\sigma_x := \min_{i \in I^c}s_{x,i}$ and $\sigma_y := \min_{j \in J^c}s_{y,j}$. 
The Goldman-Tucker theorem guarantees that $s_{x, i} > 0$ for all $i \in I^c$ and $s_{y, j} > 0$ for all $j \in J^c$. 
We then define the following scalar parameter: 
\begin{equation}
    \delta :=
    \min\left\{
      \underline x,\ \underline y,\
      \frac{\sigma_x}{L}\ \text{if }I^c\neq\varnothing,\
      \frac{\sigma_y}{L}\ \text{if }J^c\neq\varnothing
    \right\},
    \label{eq:delta-main}
\end{equation}
where terms corresponding to empty complements are omitted. 
It follows from the above that $0 < \delta \leq 1$.
We call $\delta$ the \emph{Goldman-Tucker separation constant} associated with the selected saddle point and the chosen bound \(L\).  
Intuitively, this constant records how decisively the Goldman-Tucker saddle point separates its supports from their complements, so that the larger the constant, 
the more decisive the separation (since all on-support coordinates are far from zero and all off-support coordinates have large payoff slacks).
Throughout this paper, we use \emph{on-support} and \emph{off-support} to indicate the set of coordinates corresponding to the selected Goldman-Tucker saddle-point.

\section{Main Theorem and its Proof}\label{sec:main-conv-altgda}

\subsection{Main theorem and discussion}

We establish the $O(1/T)$ ergodic convergence rate in the following theorem.
\begin{theorem}
\label{thm:main-text}
For any matrix game with the payoff matrix $A \in \R^{m\times n}$, 
let $L > 0$ satisfy $\lVert A \rVert_2\leq L$, 
and define $\delta$ from a selected Goldman-Tucker strictly complementary saddle point as in~\cref{eq:delta-main}. 
For every initialization
\((x^0,y^0)\in\Delta_n\times\Delta_m\), every \(T\geq1\), and every
fixed stepsize satisfying
$0 < \eta\leq\frac{\delta}{2\sqrt{2}\,L}$, 
the averages of the iterates generated by~\eqref{eq:altgda-main} for matrix games
with \(\mathcal X=\Delta_n\) and \(\mathcal Y=\Delta_m\) satisfy
\begin{equation}
    \Gap\left( \overline{x}^T, \overline{y}^T \right)
    \leq\frac{15}{2\eta T}.
    \label{eq:main-rate-main}
\end{equation}
\end{theorem}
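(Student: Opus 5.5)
The plan is to bound the ergodic gap by a telescoping sum of per-iteration regret terms. By bilinearity and the max form of \eqref{eq:gap-main},
\[
T\cdot\Gap(\overline x^T,\overline y^T)=\max_{x,y}\sum_{t=1}^T\bigl((y)^\top Ax^t-(y^t)^\top Ax\bigr).
\]
The three-point inequality for Euclidean projection onto $\Delta_n$ and $\Delta_m$ then gives, for any comparator $(x,y)$,
\[
\eta\,(y^t)^\top A(x^{t+1}-x)\le \tfrac12\|x^t-x\|^2-\tfrac12\|x^{t+1}-x\|^2-\tfrac12\|x^{t+1}-x^t\|^2,
\]
and
\[
\eta\,(y-y^{t+1})^\top Ax^{t+1}\le \tfrac12\|y^t-y\|^2-\tfrac12\|y^{t+1}-y\|^2-\tfrac12\|y^{t+1}-y^t\|^2 .
\]
Summing the two, the comparator distances telescope. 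Their boundary contribution is at most $\tfrac12(\operatorname{diam}\Delta_n^2+\operatorname{diam}\Delta_m^2)\le 2$. What remains is the alternation mismatch between $(y^t)^\top Ax^{t+1}$ and the terms the gap actually requires, namely $(y^{t+1})^\top Ax^{t+1}$ against $(y^t)^\top Ax^{t+1}$, together with an index shift. This mismatch equals $\sum_t (y^{t+1}-y^t)^\top Ax^{t+1}$, up to boundary terms of size $O(1)$. In the unconstrained case this is handled by a conserved energy $\|x\|^2+\|y\|^2-\eta\, y^\top A x$-type quantity, as in \citet{bailey2020finite}. The core of the proof is therefore to show that
\[
S_T:=\eta\sum_{t}(y^{t+1}-y^t)^\top A x^{t+1}
\]
is bounded by an absolute constant, independent of $T$ and of the initialization.

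To control $S_T$, I will use a quadratic Lyapunov function centered at the Goldman--Tucker saddle point, of the form
\[
\Phi_t=\tfrac12\|x^t-x^\star\|^2+\tfrac12\|y^t-y^\star\|^2-\eta\,(y^t-y^\star)^\top A(x^t-x^\star),
\]
suggested by the LMI of \cref{sec:lmi}. Since $\eta L\le 1/(2\sqrt2)$, $\Phi_t$ is nonnegative and comparable to the squared distance, and $\Phi_0\le$ a small constant because the simplices have diameter $\sqrt2$. The key step is a one-step inequality
\[
\Phi_{t+1}\le\Phi_t-c\,\eta\,(\text{complementarity terms})+\text{(a term whose sum reproduces }S_T\text{ with the opposite sign)}.
\]
Telescoping this inequality bounds $S_T$ by $\Phi_0$ plus boundary terms. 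Collecting all constants should produce $15/2$.

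The main obstacle is the boundary: projection onto the simplex destroys the exact energy conservation of the unconstrained bilinear case. I will decompose the coordinates into on-support and off-support parts. On $I$ and $J$, whenever iterates are within distance $\delta$ of $x^\star$ and $y^\star$, no on-support coordinate hits zero, because $x^\star_i\ge\underline x\ge\delta$. On $I^c$ and $J^c$, the slack $s_{x,i}\ge\sigma_x\ge\delta L$ pushes those coordinates toward zero. The projection's normal-cone contribution should then have a favorable sign: it equals $\langle s_x,x^{t+1}\rangle\ge0$ plus terms of size $\eta L\|\cdot\|$, and these are dominated precisely when $\eta\le \delta/(2\sqrt2 L)$.

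Concretely, I will write the projection as $x^{t+1}=x^t-\eta A^\top y^t-\eta(\lambda_t\one+\mu_t)$ with $\mu_t\le0$ supported where $x^{t+1}=0$, and similarly for $y^{t+1}$. I will then expand $\Phi_{t+1}-\Phi_t$ and verify that the cross terms involving $\mu_t$ are absorbed using strict complementarity and the $\delta$-separation. Globality, with no closeness to equilibrium assumed, should come from the fact that these sign arguments use only $\delta$ and the step bound, not proximity.

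I expect the absorption of the multiplier cross terms to be the hardest step. My first attempt will be to bound each such cross term coordinatewise by $\eta L\cdot\|\Delta\text{iterate}\|\cdot$(slack) and apply AM--GM against $-\tfrac12\|x^{t+1}-x^t\|^2$.
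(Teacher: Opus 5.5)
Your plan has a genuine gap in two places: the reduction targets a quantity that is not bounded, and the absorption mechanism cannot bound the correct quantity globally.

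\textbf{The reduction.} $S_T$ is not bounded. The projection KKT form is $\Delta y_t=\eta(Ax^{t+1}-\lambda_t\one+\rho_t)$ with $\rho_t\geq0$, $\rho_t\odot y^{t+1}=0$ and $\one^\top\Delta y_t=0$. It gives $\eta(\Delta y_t)^\top Ax^{t+1}=\lVert\Delta y_t\rVert_2^2+\eta\rho_t^\top y^t\geq\lVert\Delta y_t\rVert_2^2$. Hence $S_T$ grows linearly in $T$ whenever the $y$-iterates keep moving, e.g.\ on the interior cycle of \cref{ex:simplex-last-iterate} for $\eta\leq1/2$. The terms $-\frac12(\lVert\Delta x_t\rVert_2^2+\lVert\Delta y_t\rVert_2^2)$ from your three-point inequalities are not a resource for AM--GM; they are needed for an exact cancellation. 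Also use $-\eta(y^t)^\top A\Delta x_t=\lVert\Delta x_t\rVert_2^2+\eta\mu_t^\top x^t$, with $\mu_t\geq0$ the multiplier of $x\geq0$. Then the remainder in your summed inequality is exactly $\frac{\eta}{2}\bigl[(y^{t+1})^\top Ax^{t+1}-(y^t)^\top Ax^t\bigr]+\frac12r_t$, where $r_t=\eta(\mu_t^\top x^t+\rho_t^\top y^t)\geq0$ is a pure collision term. So the real target is $\sum_t r_t$. With this correction, your one-sided regret inequality is a valid one-potential substitute for the paper's pairing of two shifted inequalities (potentials $\phi_t,\psi_t$), with the same dependence on $\sum_t r_t$.

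\textbf{The absorption mechanism.} Your mechanism cannot bound $\sum_t r_t$ globally, and no per-step inequality $r_t\leq c\,\mathscr D_t$ with $c<1$ can hold; the energy can genuinely increase in a step.

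On the support, ``iterates within $\delta$ of $(x^\star,y^\star)$'' is exactly the local hypothesis the theorem removes. The paper instead uses the exact balance $V_{t+1}-V_t+\mathscr D_t=r_t$ for $V_t=\lVert x^t-x^\star\rVert_2^2+\lVert y^t-y^\star\rVert_2^2-\eta(y^t-y^\star)^\top A(x^t-x^\star)$. (With your $\frac12$ normalization the cross term should be $-\frac{\eta}{2}(\cdot)$; with $-\eta$ it is not conserved even on interior orbits.) The dissipation is $\mathscr D_t=\eta\bigl(s_x^\top(x^t+x^{t+1})+s_y^\top(y^t+y^{t+1})\bigr)+2\eta(\mu_t^\top x^\star+\rho_t^\top y^\star)$. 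Since $\mu_t^\top x^\star\geq\underline x\sum_{i\in I}\mu_{t,i}$, and on-support clipping contributes at most $\eta\cdot\eta L\sum_{i\in I}\mu_{t,i}$ once the average-multiplier term $\eta e_t^x\Delta M_t^x$ cancels between the on- and off-support parts, on-support collisions are dominated at any distance from equilibrium.

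Off the support, the collision term contains $-\eta(d_t^x)^\top\Delta z_t$, where $z_t=x^t_{I^c}$ and $d_t^x=\mathsf C_I(-A^\top y^t)$ is the off-support payoff contrast at the \emph{current} $y^t$, not at $y^\star$. Suppose an off-support coordinate of tiny mass $\varepsilon$ is clipped while $y^t$ sits on the face $J$ far from $y^\star$ and nothing else collides. Then $r_t\approx\eta\varepsilon(d_t^x)_i$ but $\mathscr D_t\approx\eta\varepsilon s_{x,i}$. This ratio is governed by $y^t-y^\star$ and does not shrink with $\eta$. AM--GM against $\lVert\Delta x_t\rVert_2^2$ only trades it for an $O(\eta^2L^2)$ per-step error that the dissipation does not control.

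The missing idea is summation by parts with the storage term $B_{x,t}=\eta(d_{t-1}^x)^\top z_t$. The leftover $\eta(d_t^x-d_{t-1}^x)^\top z_t$ is $O(\eta^2L^2/\sigma_x)\,s_x^\top x^t$, because $d_t^x-d_{t-1}^x=\mathsf C_I(-A^\top\Delta y_{t-1})=O(\eta L^2)$. This yields $r_t\leq B_t-B_{t+1}+\frac12\mathscr D_t$ with $|B_t|\leq2\sqrt2\,\eta L$, and summing against the energy balance gives $\sum_t r_t=O(1)$. This telescoping storage term, not a per-step Lyapunov decrease, is what makes the argument global.
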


We present a proof sketch of~\Cref{thm:main-text} in the next subsection, and defer the complete proof to Appendix~\ref{sec:appendix-proof}. 

\Cref{thm:main-text} establishes the first global $O(1/T)$ ergodic
convergence guarantee for AltGDA in \emph{every} finite matrix game.
Therefore, it closes the gap left by the previous theory of~\citet{nan2026convergence}: 
\emph{alternation alone, without optimism or any
additional correction step, is sufficient to achieve an $O(1/T)$ ergodic rate in general matrix games}.
More specifically, our result strengthens the previous guarantees in
the following respects:
\begin{itemize}
    \item \emph{No structural assumption on the game.}
    \Cref{thm:main-text} applies to every finite matrix game, and does not require the existence of an interior Nash equilibrium.
    Note that every matrix game admits a Goldman--Tucker strictly complementary saddle point due to the Goldman-Tucker theorem~\citep{GoldmanTucker1956,Williams1970}.
    In contrast, the previous global $O(1/T)$ guarantee required the game to admit an interior Nash equilibrium, 
    which is a restrictive assumption as many matrix games have equilibria supported only on proper faces of the simplices and admit no fully mixed equilibrium.

    \item \emph{No locality assumption on the initialization.}
    \Cref{thm:main-text} holds for every pair of initial strategies. 
    For games without an interior equilibrium, the previous $O(1/T)$ theorem required the initial strategies to lie in a sufficiently small neighborhood of a maximum-support equilibrium, which is a restrictive condition.

    \item \emph{A uniform theorem for every horizon.}
    For general matrix games, the previous evidence for a global $O(1/T)$ rate in terms of the ergodic duality gap came from numerical finite-horizon PEP computations for $T=5,6, \ldots, 50$ in \citet{nan2026convergence}, 
    which conjectured an $O(1/T)$ ergodic-rate.
    Our \Cref{thm:main-text} shows that the finite-horizon numerical conjecture in \citet{nan2026convergence} is indeed correct with an analytical proof that holds for every $T \geq 1$.
\end{itemize}

\subsection{Proof Sketch of~\Cref{thm:main-text}}\label{sec:architecture}

The standard path to an \(O(1/T)\) ergodic rate often follow a Lyapunov function based
argument: we show a nonnegative Lyapunov or energy function whose per-step decrease
dominates the quantity to be summed.  However, the presence of alternation and projection in AltGDA requires some careful modifications to the Lyapunov based argument. We briefly describe the  proof sketch of our main convergence result in five steps below with the details of each step shown in full in
Appendix~\ref{sec:appendix-proof}.  

\paragraph{Establish an exact energy balance.}

By the KKT conditions  \eqref{eq:altgda-main} for matrix games AltGDA admits the following equivalent representation without requiring explicit projection steps:
there are scalars $\gamma_t, \lambda_t$ and vectors $\mu_t\in\R_+^n,\rho_t\in\R_+^m$ such that (\cref{lem:kkt})
\begin{align}
    & \Delta x_t := x^{t+1} - x^t 
    =
    \eta(v_t-\gamma_t\one+\mu_t), \quad 
    \mu_t\odot x^{t+1}=0,
    \label{eq:kkt-x-main} \\
    & \Delta y_t := y^{t+1} - y^t
    =
    \eta(u_t-\lambda_t\one+\rho_t), \quad 
    \rho_t\odot y^{t+1}=0, 
    \label{eq:kkt-y-main}
\end{align}
where \(v_t := -A^\top y^t\) and \(u_t := Ax^{t+1}\), and \(\odot\) denotes the coordinatewise product.
We call \(\gamma_t,\lambda_t\) \emph{threshold scalars}, and \(\mu_t,\rho_t\) are \emph{multiplier vectors}. 

Let us fix a Goldman-Tucker saddle point $(x^\star, y^\star)$ as the selected NE.
We can then define several important quantities that are useful throughout the proof. 

We leverage an \emph{energy function} $V_t$ that consists of the squared Euclidean distance between the current iterates and the selected NE, plus a bilinear cross term to account for the alternation: 
\begin{equation}
    V_t :=
    \lVert x^t-x^\star\rVert_2^2
    +\lVert y^t-y^\star\rVert_2^2
    -\eta(y^t-y^\star)^\top A(x^t-x^\star).
    \label{eq:energy-main}
\end{equation}

We then define the \emph{dissipation} term $\mathscr{D}_t$ that scales with the magnitude of the mismatches between the selected NE and the current iterates on both on-support and off-support coordinates. 
\begin{equation}
    \mathscr{D}_t :=
    \eta\left( s_x^\top (x^t + x^{t+1}) + s_y^\top (y^t + y^{t+1} ) \right) + 2 \eta \left( \mu_t^\top x^\star + \rho_t^\top y^\star \right)
    \label{eq:dissipation-2}
\end{equation}
where the first term measures the probability masses of the current iterates on the off-support coordinates, 
and the second term measures the current iterates colliding with the simplex boundary on the on-support coordinates. 
We notice that both terms vanish when the supports of the selected NE are correctly identified. 

Finally, we define the \emph{residual} term $r_t := \eta v_t^\top\Delta x_t + \eta u_t^\top\Delta y_t - \lVert\Delta x_t\rVert_2^2 - \lVert\Delta y_t\rVert_2^2$. 
By using the KKT representations, we can show that the residual term is exactly (\cref{lem:residual})
\begin{equation}
    r_t = \eta(\mu_t^\top x^t+\rho_t^\top y^t) \geq 0. 
\end{equation}
This term is positive exactly when there exists at least one coordinate that has both positive probability mass in the previous iteration and a positive projection multiplier, and thus zero probability mass in the new iteration, by~\cref{eq:kkt-x-main,eq:kkt-y-main}.
Therefore, this term is positive only when the trajectories of~\eqref{eq:altgda-main} collide with the simplex boundary.

As the first step of the proof, we establish an exact identity that balances $V_t$, $\mathscr{D}_t$, and $r_t$ (\cref{lem:energy})
\begin{equation}
    V_{t+1} - V_t + \mathscr{D}_t = r_t.
    \label{eq:energy-balance-main}
\end{equation}
This equation explicitly describes the change of the energy function per step, and plays a central role in our proof.

\paragraph{Control the residual by the dissipation.}
The next central step we need is showing that the residual term is small relative to the dissipation term. In particular, we pursue an upper bound for the residual term $r_t$ which is smaller than $\mathscr{D}_t$ when we choose a small stepsize. 

To achieve this, we need to handle the on-support and off-support coordinates separately. 
The difficult part is the off-support block of coordinates. 
It is turns out that we can prove the following useful decomposition of the off-support block. For the $x$-block, letting $(v)_{I^c} \in \mathbb{R}^{| I^c |}$ denote the restriction of a vector $v$ to coordinates in $I^c$, we have (\cref{lem:reflection-x})
\begin{align}
    \eta(\mu_t)_{I^c}^\top (x^t)_{I^c} = - & \langle G_x (\Delta x_t)_{I^c}, (\Delta x_t)_{I^c} \rangle \notag \\ 
    & - \eta(d_t^x)^\top (\Delta x_t)_{I^c} - \eta \left( \tfrac{1}{| I |} \sum\nolimits_{i\in I}\mu_{t,i} \right) \left( \sum\nolimits_{i \in I^c} (\Delta x_t)_i \right), 
\end{align}
where $G_x$ is a positive semidefinite matrix and $d^x_t \in \mathbb{R}^{| I^c |}$ encodes the off-support payoff compared to the average of the on-support payoff.
Here, we present this decomposition using the same notations as before, 
and we write a complete statement and its proof with a set of more comprehensive notations in Appendix~\eqref{app:subsec:x-block}. 

For the on-support part, we have the following decomposition: 
\begin{equation*}
    \eta (\mu_t)_I^\top x_I^t =
    - \eta (\mu_t)_I^\top
    \left( (\Delta x_t)_I - ( \tfrac{1}{| I |}\sum\nolimits_{i \in I} (\Delta x_t)_i ) \one_{| I |} \right) + \eta \left( \tfrac{1}{| I |} \sum\nolimits_{i\in I}\mu_{t,i} \right) \!\! \left( \sum\nolimits_{i \in I^c} (\Delta x_t)_i \right).
    \label{eq:cancellation-x-main}
\end{equation*}

Combining both on-support and off-support components, the following upper bound holds for the $x$-block: 
\begin{equation}
    \begin{aligned}
    \eta\mu_t^\top x^t
    \leq{}
    B_{x,t}-B_{x,t+1} +
    \sqrt{1 + \tfrac{1}{|I|}}\,
    \tfrac{\eta^2 L^2}{\sigma_x}(s_x^\top (x^t + x^{t+1}))
    + \sqrt{1 - \tfrac{1}{| I |}}\,
    \tfrac{\eta^2 L}{\underline x}(\mu_t^\top x^\star), 
    \end{aligned}
    \label{eq:block-bound-x-main}
\end{equation}
where $B_{x,t} \coloneqq \eta(d_{t-1}^x)^\top(x^t)_{I^c}$, with
$d_{-1}^x=d_0^x$, as in~\cref{eq:storage-x}.
If $I^c=\varnothing$, set $B_{x,t}=0$ and omit the
$\sigma_x$-term in~\cref{eq:block-bound-x-main}.
\cref{eq:block-bound-x-main} bounds the $x$-block contribution
$\eta\mu_t^\top x^t$ to $r_t$ by the consecutive difference
of a bounded storage term plus $O(\eta)\mathscr{D}_t$,
where the implied constant depends only on the game and the selected
saddle point.
Combining this bound with the corresponding $y$-block bound
(\cref{eq:block-bound-y}), and setting
$B_t:=B_{x,t}+B_{y,t}$ with $B_{y,t}$ as in~\cref{eq:storage-y},
the stepsize condition in~\cref{thm:main-text} yields
\begin{equation}
    r_t \leq B_t - B_{t+1} + \tfrac{1}{2} \mathscr{D}_t.
    \label{eq:residual-upper-bounded-by-dissipation}
\end{equation}

\paragraph{Prove the residual is summable.}
The summability of the residual term follows directly from~\cref{eq:energy-balance-main,eq:residual-upper-bounded-by-dissipation}: 
\begin{equation*}
    \sum\nolimits_{t=0}^{T-1} r_t \leq B_0 - B_{T} + \frac{1}{2} \sum\nolimits_{t=0}^{T-1} \mathscr{D}_t = B_0 - B_{T} + \frac{1}{2} \sum\nolimits_{t=0}^{T-1} (r_t - V_{t+1} + V_t), 
\end{equation*}
which leads to a constant upper bound for $\sum_{t=0}^{T-1} r_t$ for any $T \geq 1$ because both $V_t$ and $B_t$ are bounded.
The rest of the proof follows the same path as that in~\citet[Lemma 1]{nan2026convergence}, which essentially proves the following descent inequality: 
\begin{equation}
    \eta (y^\top Ax^t - (y^t)^\top Ax) + \eta(y^\top Ax^{t+1}-(y^{t+1})^\top Ax) \leq \Phi_t - \Phi_{t+1}+r_t, \quad \forall\, t \geq 1, 
    \label{eq:combined-shifted-main}
\end{equation}
where $\Phi_t$ is a bounded Lyapunov function. 
Since the left-hand side of~\cref{eq:combined-shifted-main} is connected to the duality gap of the averaged iterates by averaging and maximizing, the summability of $\sum_{t=0}^{T-1} r_t$ is sufficient to prove an $O(1/T)$ convergence rate.

\section{A PEP Framework for Lyapunov Certificates}\label{sec:lmi}

Our Lyapunov-style analysis behind \cref{thm:main-text} was motivated by a PEP-based framework that numerically searches for quadratic Lyapunov certificates for AltGDA over general compact convex sets. Our framework is inspired by the recent automated PEP-based Lyapunov analysis of \citet{upadhyaya2025automated,upadhyaya2025autolyap} and the operator
interpolation results of \citet{bousselmi2024interpolation}.  

The core idea behind our PEP-based Lyapunov framework is modeling and solving a linear matrix inequality (LMI) over the space of appropriate quadratic Lyapunov functions, and if this LMI is feasible, then there exists a suitable Lyapunov function that can lead to the desired ergodic convergence rate for AltGDA. In our Lyapunov function search three constraints (denoted by $C_1, C_2, C_3$) are modeled in the LMI: constraint \(C_1\) requires one-step Lyapunov descent up to an auxiliary quadratic term, \(C_2\) requires a
nonnegative potential, and \(C_3\) compares the auxiliary term with the
one-step comparator gap, with the decision variables being two symmetric Lyapunov
matrices, an auxiliary quadratic pair, and condition-specific positive
semidefinite (PSD) blocks with interpolation and normal-cone
multipliers. Also rather than considering only one iteration of AltGDA, we consider a variable number of iterations modeled by a history parameter \(h\), as proving the desirable convergence rate may require more than one iteration. Here $h=0$ corresponds to one iteration of AltGDA, $h=1$ means two iterations and so on. 

An \emph{exactly} feasible solution to the Lyapunov LMI satisfies \(C_1, C_2, C_3\) on every
window, and telescoping gives the resultant $O(1/T)$ ergodic convergence rate for AltGDA in \cref{thm:c123-lmi-conditional-rate} below. However, the standard SDP solvers can only compute solutions with a numerical tolerance (e.g., Mosek \citep{mosekjl} has a default feasibility tolerance of $10^{-8}$). Such a numerically computed feasible solution is only \emph{approximate} and \emph{not exact}. However, the structure present in the approximately feasible solution to this PEP-based LMI often reveals specific patterns \citep{goujaud2023fundamental}, which we can then exploit to arrive at an analytical Lyapunov function for AltGDA leading to the desired $O(1/T)$ ergodic convergence rate.
In Appendix~\ref{sec:appendix-lmi}, we describe the formulation for  the LMI  in detail and also discuss how the feasibility of this LMI leads to an $O(1/T)$ convergence rate for AltGDA. For brevity we just present the core result below.

\begin{theorem}[Conditional Lyapunov--LMI rate]
\label{thm:c123-lmi-conditional-rate}
Let \(A\in\R^{m\times n}\) satisfy \(\lVert A\rVert_2\leq1\), and let
\(\mathcal X\subseteq\R^n\)
and \(\mathcal Y\subseteq\R^m\) be compact convex sets with
\(\lVert x\rVert_2\leq D\) for \(x\in\mathcal X\) and
\(\lVert y\rVert_2\leq D\) for \(y\in\mathcal Y\).  Let
\(\{(x^t,y^t)\}_{t\geq0}\) be generated by the AltGDA iteration \eqref{eq:altgda-main} with initialization
\((x^0,y^0)\in\mathcal X\times\mathcal Y\) and stepsize \(\eta>0\), and
let \(\Gap\) be defined by \eqref{eq:gap-main}. Fix an integer \(h\geq0\).  Suppose the Lyapunov LMI of
Appendix~\ref{sec:appendix-lmi} is exactly feasible for \((h,\eta)\), and fix
any feasible certificate.  Let \(Q^x\) and \(Q^y\) be the Lyapunov
matrices in that certificate, and define
\(\varsigma_x^+=\max\{\lambda_{\max}(Q^x),0\}\) and
\(\varsigma_y^+=\max\{\lambda_{\max}(Q^y),0\}\).  Fix an integer
\(T\geq1\).  Then
\[
    \Gap\left(
        \frac1T\sum_{k=0}^{T-1}x^k,\;
        \frac1T\sum_{k=0}^{T-1}y^k
    \right)
    \leq
    \frac{D^2\left((h+4)\varsigma_x^++(h+5)\varsigma_y^+
    +(h+1)\eta^2\left(\varsigma_x^++\varsigma_y^+\right)\right)}{T}.
\]
\end{theorem}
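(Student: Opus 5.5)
The plan is to unpack what exact feasibility of the Lyapunov LMI gives pointwise along the AltGDA trajectory, and then telescope. Fix a window of \(h+1\) consecutive AltGDA steps starting at time \(k\), and let \(z_k\) be the stacked state collecting the iterates \(x^k,\dots,x^{k+h+1}\), \(y^k,\dots,y^{k+h+1}\), a comparator pair \((x,y)\in\mathcal X\times\mathcal Y\), and the gradient and normal-cone vectors generated by the projections.

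\textbf{Step 1: pointwise inequalities.} The multipliers in the certificate are nonnegative and the blocks are PSD. Because the trajectory satisfies the interpolation conditions (for \(\lVert A\rVert_2\le1\), from \citet{bousselmi2024interpolation}) and the normal-cone inequalities for the projections onto \(\mathcal X,\mathcal Y\), the S-lemma-type weighting in the LMI turns each of \(C_1,C_2,C_3\) into a scalar inequality on \(z_k\). Writing \(\mathcal V_k=\mathcal V(z_k)\) for the quadratic potential built from \(Q^x,Q^y\) and \(W_k\) for the auxiliary quadratic pair, these read:
\begin{equation*}
\text{(i) } \mathcal V_{k+1}-\mathcal V_k+W_k\le 0,\qquad \text{(ii) } \mathcal V_k\ge0,\qquad \text{(iii) } y^\top Ax^k-(y^k)^\top Ax\le W_k .
\end{equation*}
Here (i) holds up to the shift of the window by one step, and the normalization of the gap term in (iii) is whatever the LMI fixes.

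\textbf{Step 2: telescoping.} Summing (iii) and (i) over \(k=0,\dots,T-1\) gives
\begin{equation*}
\sum_{k=0}^{T-1}\bigl(y^\top Ax^k-(y^k)^\top Ax\bigr)\le \mathcal V_0-\mathcal V_T\le \mathcal V_0 ,
\end{equation*}
where the last step uses (ii). If windows do not fully fit at the end of the horizon, I will handle the last \(h+1\) indices separately by bounding their gap terms crudely, using \(\lvert y^\top Ax\rvert\le D^2\) for each term. This crude bound is the likely source of the additive \(h\)-dependent constants.

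\textbf{Step 3: bounding \(\mathcal V_0\).} The potential is a quadratic form in the window variables. The \(Q^x\) part acts on \(h+\text{const}\) vectors of norm at most \(D\), namely the iterates and the comparator. Gradient-type vectors such as \(\eta A^\top y\) have norm at most \(\eta D\), which is where the \((h+1)\eta^2\) term comes from. Since \(v^\top Qv\le\varsigma^+\lVert v\rVert^2\), I get \(\mathcal V_0\le D^2\bigl((h+4)\varsigma_x^++(h+5)\varsigma_y^++(h+1)\eta^2(\varsigma_x^++\varsigma_y^+)\bigr)\). The extra \(y\) count reflects that the \(y\)-window contains one more iterate because of the alternation.

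\textbf{Step 4: conclusion.} Divide by \(T\), use bilinearity to rewrite the left side as \(T\bigl(y^\top A\bar x-\bar y^\top Ax\bigr)\) with the averages over \(k=0,\dots,T-1\), and maximize over the comparator. By \eqref{eq:gap-main} this yields the stated bound.

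\textbf{Main obstacle.} The hardest step is the careful bookkeeping of the window states in Steps 2 and 3. I need to verify that the LMI's lifting makes \(\mathcal V_{k+1}\) in (i) literally equal to \(\mathcal V\) of the shifted window, so that the sum telescopes exactly. I also need to count precisely how many \(D\)-bounded and \(\eta D\)-bounded vectors enter \(\mathcal V_0\) in order to recover the constants \(h+4\), \(h+5\), and \(h+1\). I would first pin down the window state from the appendix formulation and then match the constants.
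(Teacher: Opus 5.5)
Your architecture matches the paper's proof. The trace (weak-duality) argument turns each exactly feasible condition into a window inequality. Combining $C_1$ and $C_3$ gives $(q^\star)^\top x^k-(p^\star)^\top y^k\le\mathcal V_k-\mathcal V_{k+1}$, and summing over $k=0,\dots,T-1$ telescopes. $C_2$ at the terminal window discards $-\mathcal V_T$, and bilinearity plus a supremum over comparators gives the gap. What is missing is the bookkeeping that produces the stated constants, which is most of the statement's content, and two of your tentative explanations would not give them.

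First, drop the end-of-horizon correction in Step 2. The trajectory is given for all $t\ge0$, so the window inequality at every base $k\le T$ is available; it only involves iterates up to index $k+h+2$. Hence the sum telescopes exactly to $\mathcal V_0-\mathcal V_T$. Crude $D^2$ bounds on $h+1$ leftover gap terms would add an extra $O((h+1)D^2)$ to the numerator, which the statement does not contain. All of the $h$-dependence comes from $\mathcal V_0$.

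Second, in Step 3 you misidentify which columns are $D$-bounded and which are $\eta D$-bounded. The potential state is
\[
H^x_0=[x^\star\mid q^\star\mid x^0\mid\bar\delta^0\mid\cdots\mid\bar\delta^{h+1}\mid q^0\mid\cdots\mid q^h].
\]
It contains a single iterate. Its $D$-bounded columns are $x^\star,q^\star,x^0$ and the \emph{unscaled} images $q^0,\dots,q^h$ (using $\lVert A\rVert_2\le1$), which gives $h+4$. Its small columns are the projection residuals $\bar\delta^{i+1}=x^i-\eta q^i-x^{i+1}$, not vectors of the form $\eta A^\top y$. The estimate you need is
\[
\lVert\bar\delta^{i+1}\rVert=\operatorname{dist}(x^i-\eta q^i,\mathcal X)\le\lVert(x^i-\eta q^i)-x^i\rVert=\eta\lVert q^i\rVert\le\eta D,
\]
which uses $x^i\in\mathcal X$. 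The padding convention $\bar\delta^0=0$ is what leaves exactly $h+1$ (not $h+2$) nonzero residual columns. The same holds for $\underline\delta^{i+1}$, using $\lVert p^{i+1}\rVert\le D$.

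Likewise, $H^y_0$ has one extra \emph{image} column ($p^0,\dots,p^{h+1}$), not an extra iterate, and that is where $h+5$ comes from. Applying $\mathbf{tr}(QH^\top H)\le\max\{\lambda_{\max}(Q),0\}\lVert H\rVert_F^2$ with these counts gives $\lVert H^x_0\rVert_F^2\le(h+4)D^2+(h+1)\eta^2D^2$ and $\lVert H^y_0\rVert_F^2\le(h+5)D^2+(h+1)\eta^2D^2$. This yields the claimed bound.
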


\paragraph{Numerical results.}

In our numerical experiments, for a numerical solution to the Lyapunov LMI to be considered approximately feasible, we require: 
\begin{itemize}
    \item MOSEK reports \texttt{OPTIMAL} and \texttt{FEASIBLE\_POINT}.
    \item The largest absolute equality residual is less than or equal to $10^{-6}$.
    \item The smallest eigenvalue among the required PSD blocks is $-10^{-7}$ or larger.
    \item The smallest nonnegativity-constrained dual multiplier is $-10^{-8}$ or larger.
\end{itemize}

Figure~\ref{fig:c123-mosek-lmi-diagnostics} summarizes the numerical
Lyapunov search over the stepsize \(\eta\) and history parameter \(h\). We find that for $h=0, 1$, the LMI in consideration does not yield an approximately feasible solution based on our acceptance criteria. Starting from $h \geq 2$, we find that MOSEK can find an approximately feasible solution to the LMI for a range of stepsizes, as shown in \Cref{fig:c123-mosek-lmi-diagnostics}. \Cref{fig:c123-mosek-lmi-diagnostics}(a) shows the range of accepted stepsizes. On the other hand, \Cref{fig:c123-mosek-lmi-diagnostics}(b) shows how closely the computed
candidates satisfy the LMI constraints based on a diagnostic score that we define as follows. Let \(r\) be the maximum absolute affine-identity residual,
\(e\) the minimum eigenvalue over the required PSD blocks, and
\(\ell\) the minimum nonnegativity-constrained multiplier. Then the diagnostic score for a numerical solution is given by:
\[
s=\max\left\{
  \frac{r}{10^{-6}},
  \frac{(-10^{-7}-e)_+}{10^{-7}},
  \frac{(-10^{-8}-\ell)_+}{10^{-8}}
\right\},
\text{ where } (a)_+=\max\{a,0\},
\]

where Figure~\ref{fig:c123-mosek-lmi-diagnostics}(b)  displays
\(\log_{10}(\max\{s,10^{-16}\})\). All grids with dark green, light green, or yellow-green color satisfy the
aforementioned numerical acceptance tests. The gray grids show the pairs that failed the solver-status requirement.

\begin{figure}[t]
    \centering
    \begin{subfigure}[t]{0.48\linewidth}
        \centering
        \begin{tikzpicture}
            \begin{axis}[
                pep figure axis,
                xlabel={$h$},
                ylabel={Accepted range of $\eta$},
                xtick={2,3,4,5},
                ymode=log,
                log basis y=10,
                ymin=8e-4,
                ymax=0.7,
                ytick={0.001,0.01,0.1,0.5},
                yticklabels={$10^{-3}$,$10^{-2}$,$10^{-1}$,$0.5$},
                enlarge x limits=0.22
            ]
                \addplot[
                    draw=none,
                    mark=none,
                    error bars/y dir=both,
                    error bars/y explicit,
                    error bars/error bar style={draw=lmiFull, line width=3.2pt},
                    error bars/error mark options={draw=lmiFull, line width=0.8pt}
                ]
                    table[
                        col sep=comma,
                        x=h,
                        y=eta_midpoint,
                        y error=eta_radius
                    ] {data/c123_lmi_optimal_eta_ranges.csv};
            \end{axis}
        \end{tikzpicture}
        \caption{History parameter $h$}
    \end{subfigure}
    \hfill
    \begin{subfigure}[t]{0.48\linewidth}
        \centering
        \begin{tikzpicture}
            \begin{axis}[
                pep figure axis,
                width=0.84\linewidth,
                xlabel={$\eta$},
                ylabel={$h$},
                xmode=log,
                log basis x=10,
                xmin=8e-4,
                xmax=0.75,
                ytick={2,3,4,5},
                ymin=1.55,
                ymax=5.45,
                colormap={diagnosticmap}{
                    color(0cm)=(behaviorGap!75!black);
                    color(1cm)=(yellow!80!black);
                    color(2cm)=(pepSimGDA!90!black)
                },
                point meta min=-3,
                point meta max=0,
                colorbar,
                colorbar style={
                    width=5pt,
                    ylabel={$\log_{10}$ diagnostic score},
                    ylabel style={font=\scriptsize},
                    yticklabel style={font=\scriptsize}
                }
            ]
                \addplot[
                    only marks,
                    mark=square*,
                    mark size=2.2pt,
                    mark options={draw=gray!60, fill=gray!25},
                    restrict expr to domain={\thisrow{strict_mosek_primal_point_id}}{0:0}
                ]
                    table[
                        col sep=comma,
                        x=eta,
                        y=h
                    ] {data/c123_lmi_optimal_eta_heatmap.csv};
                \addplot[
                    scatter,
                    only marks,
                    mark=square*,
                    mark size=2.2pt,
                    scatter/use mapped color={draw=mapped color, fill=mapped color},
                    point meta=\thisrow{log10_diagnostic_score},
                    restrict expr to domain={\thisrow{strict_mosek_primal_point_id}}{1:1}
                ]
                    table[
                        col sep=comma,
                        x=eta,
                        y=h
                    ] {data/c123_lmi_optimal_eta_heatmap.csv};
            \end{axis}
        \end{tikzpicture}
        \caption{Diagnostic score over $\eta$}
    \end{subfigure}
    \caption{Numerical Lyapunov search for AltGDA.
(a) Range of accepted stepsizes yielding approximately feasible solutions for the LMI for each history parameter \(h \geq 2\). Note that for $h=0, 1$, Mosek could not find any feasible solution.
(b) Quality of the numerical solutions for \((h,\eta)\) with darker green indicating
higher precision solutions.}
\label{fig:c123-mosek-lmi-diagnostics}
\end{figure}

\section{Last-Iterate Behavior}\label{sec:pep}

In the previous sections, we established the global $O(1/T)$ convergence rate of AltGDA inspired by the PEP-based framework that seeks a quadratic Lyapunov certificate. A natural question is: \emph{can we achieve a last-iterate convergence rate for AltGDA by tuning its stepsize $\eta >0$}? Unfortunately, the answer is no: we give simple counterexamples showing that the last-iterate duality gap of AltGDA does not converge to zero both in matrix games and in bilinear problems over general compact convex sets. For AltGDA over compact convex sets, we provide a simple $2$-dimensional counterexample over the unit ball such that no matter what $\eta>0$ we use, the last-iterate duality gap remains a constant across all iterations. Second, for matrix games with $\mathcal X=\Delta_n$ and $\mathcal Y=\Delta_m$, we present one  \(2 \times 2\) matrix and initialization such that AltGDA has its last-iterate duality gap bounded away from zero for every $\eta >0$. Our explicit counterexamples are motivated again by a PEP-based framework \citep{drori2014performance,taylor2017smooth}  that numerically computes the worst-case last-iterate duality gap of AltGDA over compact convex sets.

Appendix~\ref{subsec:pep-exact-witnesses} proves the results associated with both counterexamples below.

\begin{example}[Last-iterate duality gap of AltGDA for bilinear problems over compact convex sets]
\label{prop:pep-exact-plateau}
Let \(\mathcal P_T(\eta)\) denote the worst-case last-iterate duality gap
of projected AltGDA over all dimensions, all compact convex sets
\(\mathcal X,\mathcal Y\) contained in Euclidean unit balls, all
\(A\) with \(\lVert A\rVert_2\leq1\), and all feasible initializations.
For every constant stepsize \(\eta>0\) and time horizon \(T\geq1\), we have
\[
\mathcal P_T(\eta)=2,
\text{ and } \inf_{\eta>0}\mathcal P_T(\eta)=2.
\]
\end{example}

The upper bound is attained on two-dimensional unit balls with \(A=I_2\).
The construction has gap \(2\) at every iterate, including initialization.

\begin{example}[Last-iterate duality gap of AltGDA for matrix games]
\label{ex:simplex-last-iterate}
On \(\Delta_2\times\Delta_2\), consider
\[
A=\begin{pmatrix}1&-1\\-1&1\end{pmatrix}, \text{ with }
x^0=(3/4,1/4), \text{ and } y^0=(1/2,1/2).
\]
The unique Nash equilibrium for this game is \(x^\star=y^\star=(1/2,1/2)\).
For this matrix game, for every fixed stepsize \(\eta>0\), AltGDA has its last-iterate duality gap bounded away from zero for every $\eta >0$, i.e., it satisfies $\inf_{t\geq0}\Gap(x^t,y^t)>0$. 

By our convergence results, for every fixed \(0<\eta\leq1/2\), while in last-iterate duality gap we have $\Gap(x^t,y^t)\geq 1/2$, AltGDA converges in ergodic duality gap at a rate $\Gap(\overline x^T,\overline y^T)\leq 5/(4\eta T)$.
Thus averaging yields convergence on a trajectory whose last-iterate
gap never vanishes. We provide the proof to this claim in \cref{rem:simplex-uniform-smallsteps}. 

\end{example}

Figure~\ref{fig:pep-lastiterate} validates
\cref{prop:pep-exact-plateau} numerically using the PEP framework. It shows the computed worst-case last-iterate duality gaps for $T = 5,\ldots,30$ are essentially constant at 2 for all practical purposes over a stepsize grid containing 25 points in $[1/64, 2]$. In Appendix~\ref{subsec:pep-protocol} we describe the PEP framework in detail.

\begin{figure}[t]
    \centering
    \colorlet{seriesblue}{lmiFull}
    \begin{minipage}[t]{0.48\linewidth}
        \centering
        \begin{tikzpicture}
          \begin{axis}[
              width=\linewidth,
              height=0.78\linewidth,
              xmode=log,
              ymode=log,
              log basis x=10,
              log basis y=10,
              xmin=4.8, xmax=31,
              ymin=0.9, ymax=2.4,
              xtick={5,10,20,30},
              xticklabels={5,10,20,30},
              ytick={1,2},
              yticklabels={1,2},
              xlabel={Horizon $T$},
              ylabel={Worst-case last-iterate duality gap},
              xlabel style={font=\footnotesize},
              ylabel style={font=\footnotesize},
              tick label style={font=\footnotesize},
              grid=major,
              major grid style={gray!20},
              axis line style={black!70},
              clip marker paths=true,
            ]
            \addplot+[
              color=seriesblue,
              solid, mark=diamond*, mark size=1.7pt, line width=0.9pt,
            ] table[x=T, y=last_iterate_duality_gap, col sep=comma,
            ] {data/pep_ergodic_lastiterate_gap_figure_data.csv};
          \end{axis}
        \end{tikzpicture}
    \end{minipage}
    \caption{Numerical confirmation of the exact worst-case last-iterate gap
\(2\) over normalized compact convex sets
(\cref{prop:pep-exact-plateau}), for \(T=5,\dots,30\).}
    \label{fig:pep-lastiterate}
\end{figure}

\section{Conclusion}\label{sec:discussion}

In this paper we established a global \(O(1/T)\) ergodic convergence rate for
AltGDA in every finite two-player zero-sum matrix game, for every initialization and every horizon
$T$. Thus, alternation alone
suffices to obtain this rate in general matrix games.
We have formalized and machine-checked this global ergodic convergence
result in Lean~4. Our proof was guided by numerical observations from a PEP
framework for quadratic Lyapunov function search over compact
convex sets, which illustrates how systematic
numerical searches of Lyapunov functions can inform analytical convergence
proofs. Additionally, we provided simple counterexamples showing that the last-iterate duality gap of AltGDA does not converge to zero both in matrix games and in bilinear problems over general compact convex sets. This last-iterate behavior justifies why averaging of iterates is indeed necessary to achieve an \(O(1/T)\) rate.

\section*{Acknowledgments}

This research was supported by the Office of Naval Research awards
N00014-22-1-2530 and N00014-23-1-2374, and the National Science Foundation
awards IIS-2147361 and IIS-2238960. S. Das Gupta acknowledges support
from AFOSR Grant Number FA9550-25-1-0183.

\subsection*{AI use statement}

For developing our numerical PEP framework and deriving analytical convergence results in this paper,  we designed a \textit{domain-specific harness} using our domain expertise and used it with GPT 5.6 Sol. A  domain-specific harness is a user-written framework built around a general purpose LLM to enhance the LLM's domain specific capabilities. Our harness for the design and analysis of first-order methods for constrained bilinear games is inspired by the AutoOPT harness of \citet{kim2026autoopt} that is designed for an unconstrained single function setup. Our harness is composed of three agent \emph{skill}s; an agent skill is a collection of simple markdown files and scripts for extending agentic capabilities of an LLM, for more information see \url{https://agentskills.io/home}. We designed the first \emph{skill}  to assist us in modeling and solving the PEP problems arising in constrained bilinear games to assist us with the formulation, derivation,
implementation, and validation of the Lyapunov LMI PEP problem and the finite-horizon last-iterate duality-gap PEP problem in \cref{sec:lmi} and \cref{sec:pep}. The second skill of our harness helped us in developing analytical convergence arguments described in \cref{sec:main-conv-altgda} by observing and discovering analytical patterns obtained by solving the Lyapunov LMI PEP problem numerically. After manually verifying the correctness of the analytical convergence proofs, in the final step we used a third component of our harness to help us in formalizing and machine checking all these convergence results in Lean 4.32.0 with mathlib 4.32.0. For our Lean code, we ran two rounds of checks. In the first check, we used Lean to verify the ergodic convergence rate results with every step being explicit and the underlying logic verified, whereas in the second step we used \texttt{Comparator} independently (\url{https://github.com/leanprover/comparator}) to ensure that the Lean code proves exactly the statements it claims, does not use any forbidden axioms, and the submitted proofs are accepted by Lean kernel. 

We thoroughly reviewed any AI-assisted mathematical
arguments, code, computations, and prose and we accept responsibility for the correctness of the results in the manuscript.

\subsection*{Ethics statement}

Our work is theoretical and computational, and it uses no human subjects,
personal or sensitive data, user studies, or deployed decision systems.  Our paper studies convergence in bilinear saddle point problems and we make no deployment
recommendation either.

\subsection*{Reproducibility statement}
The Lean verification code and Julia experiment code are provided at:
\begin{center}
\url{https://github.com/Shuvomoy/AltGDA-code}
\end{center}
All the convergence results in
Sections~\ref{sec:setting} and \ref{sec:architecture} and in
Appendix~\ref{sec:appendix-proof} of the paper are formalized and
machine-checked using Lean 4.32.0 with mathlib 4.32.0. The numerical PEP
frameworks for the Lyapunov-function search in \cref{sec:lmi} and
Appendix~\ref{sec:appendix-lmi} and for the finite-horizon PEP study for
investigating last-iterate behavior of AltGDA can be reproduced using Julia.

\bibliography{refs}

\bibliographystyle{iclr2027_conference}

\appendix
\counterwithin{equation}{section}
\counterwithin{theorem}{section}
\counterwithin{lemma}{section}
\counterwithin{remark}{section}
\counterwithin{example}{section}

\appendix

\begin{center}
    {\Large Appendices}
\end{center}

\section{Full Proof of the Main Theorem}\label{sec:appendix-proof}
\subsection{Overview, setting, and notation}
\label{sec:overview}

This appendix gives a self-contained proof of \cref{thm:main-text}.
It first restates the setting of~\cref{sec:setting} 
and then constructs the analysis witness (we call the fixed Goldman-Tucker strictly complementary saddle point a \emph{witness} in the proof) needed to state the result again as~\cref{thm:main}. 
The roadmap at the end of this subsection aligns the detailed proof with the proof sketch in~\cref{sec:architecture}.

Consider a finite two-player zero-sum game with payoff matrix \( A\in\R^{m\times n} \): 
one player mixes over \(n\) actions to minimize the expected payoff and the other mixes over \(m\) actions to maximize it. 
The simplest learning dynamics for such a game is letting both players run projected gradient steps simultaneously. 
We study the \emph{alternating} variant, in which the maximizer moves second, against the minimizer's freshly updated strategy. 
The averages of the players' strategies then approach an equilibrium at the rate \(O(1/T)\) for any $T \geq 1$, 
from every initialization, 
with any appropriate stepsize.

Let
\[
    \Delta_d
    =
    \left\{z\in\R^d:\ z\geq0,\ \one^\top z=1\right\}
\]
denote the probability simplex in \(\R^d\), where \(\R^d\) is the $d$ dimensional Euclidean space. 
The matrix game can be represented by 
\begin{equation}
    \min_{x\in\Delta_n} \max_{y\in\Delta_m} y^\top Ax,
    \label{eq:game}
\end{equation}
where the \(x\)-player minimizes and the \(y\)-player maximizes the payoff. 
By the minimax theorem~\citep{vonNeumann1928}, 
the game has a value $\nu$ and a saddle point. 
The duality gap measures how near a candidate pair $( \widetilde{x}, \widetilde{y}) \in \Delta_n \times \Delta_m$ is to being a saddle point:
\begin{align}
    \Gap(\widetilde x,\widetilde y)
    &=
    \max_{y\in\Delta_m}y^\top A\widetilde x
    -
    \min_{x\in\Delta_n}\widetilde y^\top Ax
    \notag\\
    &=
    \max_{\substack{x\in\Delta_n, y\in\Delta_m}}
    \left(
      y^\top A\widetilde x-\widetilde y^\top Ax
    \right),
    \label{eq:gap}
\end{align}
which is nonnegative everywhere and zero exactly at saddle points.

We study alternating projected gradient descent--ascent (AltGDA). 
With a fixed stepsize $\eta > 0$ and \(\Pi_C\) denoting Euclidean projection onto a closed convex set \(C\), 
AltGDA generates
\begin{equation}
    x^{t+1}
    =
    \Pi_{\Delta_n}\!\left(x^t-\eta A^\top y^t\right),
    \qquad
    y^{t+1}
    =
    \Pi_{\Delta_m}\!\left(y^t+\eta Ax^{t+1}\right),
    \qquad \forall\, t \geq 0.
    \label{eq:altgda}
\end{equation}
The alternation trick is applied as we use \(x^{t+1}\) to update $y^{t+1}$ instead of using \(x^t\). 
We track the uniform averages of the players' strategies: 
\begin{equation}
    \overline x^T=\frac1T\sum\nolimits_{t=1}^T x^t,
    \qquad
    \overline y^T=\frac1T\sum\nolimits_{t=1}^T y^t.
    \label{eq:averages}
\end{equation}

\paragraph{The main result.}
It is known that every matrix game possesses a strictly complementary saddle point, by the Goldman--Tucker theorem. 
From one such saddle point,
we can extract a scalar parameter $\delta \in (0,1]$, 
called here the \emph{Goldman--Tucker separation constant} (see~\ref{sec:witness}): 
roughly speaking, this parameter captures 
the smallest equilibrium probability on the supports (of the selected saddle point) and the smallest normalized off-support payoff slack.

Then, \cref{thm:main} states that, for any $L > 0$ with \(\lVert A\rVert_2 \leq L\), 
AltGDA with every fixed stepsize \( 0 < \eta \leq \delta/(2\sqrt2\,L) \) guarantees that 
\[
    \Gap\left( \overline{x}^T, \overline{y}^T\right) \leq \frac{15}{2\eta T}
    \quad \text{for every } T \geq 1
    \text{ and every initialization $(x^0, y^0)$.}
\]

\paragraph{The main difficulty.}
The standard route to an \(O(1/T)\) ergodic rate is an energy function argument: 
by exhibiting a nonnegative energy whose per-step decrease dominates the residual quantity to be summed, 
we can show that the residual terms in the regret analysis are bounded by a constant that is independent of $T$ no matter how large the horizon $T$ is, which leads to an $O(1/T)$ ergodic rate. 

This is how the $O(1/T)$ convergence rate is proven under the assumption of the existence of an interior NE~\citep{nan2026convergence}.
However, in the general matrix games without an interior NE, the difference of the usual energy function no longer yields an upper bound for the residual terms. 

In our proof, we identify that each AltGDA step obeys an \emph{exact} energy balance identity (Lemma~\ref{lem:energy}),
\begin{equation*}
    V_{t+1}-V_t+\mathscr D_t=r_t,
\end{equation*}
where \(\mathscr D_t \geq 0\) is the ``dissipation'' term, 
and \(r_t \geq 0\) is the residual term that needs to be controlled. 
The residual term corresponds to a collision term supported on deactivated coordinates. 
It is positive exactly when at least one such coordinate has both positive previous mass and a positive projection multiplier (Lemma~\ref{lem:residual}).
The proof then shows that the residual terms, summed over an arbitrary
horizon $T$, cost only a constant.

\paragraph{Organization.}
\Cref{sec:projection} first gives the KKT representation of the two projections and identifies the residual \(r_t\) exactly.
\cref{sec:energy} then proves the energy balance for an
alternation-corrected energy \(V_t\). 
\Cref{sec:reflection} establishes an important inequality: the residual term is upper bounded by a fraction of the dissipation term when we choose an appropriate stepsize.
With these tools in hand, we build a finite bound for the residual budget, equivalently, the summability of the residual term in 
\cref{sec:budget}. 
The final step uses 
the projection inequalities to convert that budget into the duality gap bound in \cref{sec:gap}.  

\paragraph{Notation.}
The vector \(\one\) has every entry one, with dimension determined by
context; \(I_d\) is the \(d\times d\) identity; \(e_i\) is the
\(i\)th standard basis vector.  Vector inequalities are
coordinatewise, and \(\odot\) is the coordinatewise product.  For an
index set \(S\), we write \(S^c\) for its complement, \(|S|\) for its
cardinality, \(\one_S\) for its indicator vector, and \(g_S\) for the
corresponding subvector of \(g\); \(\supp(z)\) is the set of indices
of nonzero coordinates of \(z\).  The norm \(\lVert\cdot\rVert_2\) is
Euclidean on vectors and spectral on matrices, and for a symmetric
positive semidefinite matrix \(G\) we write
\(\lVert z\rVert_G^2=z^\top Gz\).

\subsection{The Goldman--Tucker saddle point and the main theorem}
\label{sec:witness}

The proof uses one strictly complementary saddle point at which supports
and payoff slacks separate cleanly. Writing $\nu$ for the game value,
the $x$-player's payoff slack on coordinate $i$ is
$(A^\top y^\star)_i-\nu$.
At this selected saddle point, on-support coordinates ($x^\star_i>0$)
have zero payoff slack, i.e., $(A^\top y^\star)_i=\nu$,
whereas off-support coordinates ($x^\star_i=0$) have strictly positive
payoff slack, i.e., $(A^\top y^\star)_i>\nu$.

The existence of such a saddle point can be shown by writing the matrix games as linear programs. 
The bilinear two player zero sum games, written as linear programs in the strategy and a value variable, are
\begin{align}
    \min_{x,\theta}\quad &\theta
    &
    \text{subject to}\quad&
    Ax\leq\theta\one,\quad
    \one^\top x=1,\quad x\geq0,
    \label{eq:primal-lp}\\
    \max_{y,\omega}\quad &\omega
    &
    \text{subject to}\quad&
    A^\top y\geq\omega\one,\quad
    \one^\top y=1,\quad y\geq0.
    \label{eq:dual-lp}
\end{align}
These constitute a primal-dual linear programming pair, 
and the strict-complementarity theorem for general-form programs~\citep{Williams1970} applies; 
equivalently, the Goldman-Tucker theorem~\citep{GoldmanTucker1956} applies after a routine conversion of the free value variables to standard form.  
Either theorem yields an optimal pair $(x^\star, y^\star)$ with common value $\nu$ whose slack vectors
\begin{equation}
    s_x=A^\top y^\star-\nu\one,\qquad
    s_y=\nu\one-Ax^\star
    \label{eq:slacks}
\end{equation}
are nonnegative and satisfy, coordinatewise,
\begin{equation}
    x_i^\star s_{x,i}=0,\qquad x_i^\star+s_{x,i}>0,
    \qquad
    y_j^\star s_{y,j}=0,\qquad y_j^\star+s_{y,j}>0.
    \label{eq:strict-complementarity}
\end{equation}
In words, on each coordinate, either the equilibrium strategy places
mass or the payoff slack is positive, and never both. 
In our proof of the main theorem, 
we first fix one such pair and call it the \emph{Goldman-Tucker saddle point}. 
In the remainder of this appendix, we use $(x^\star, y^\star)$ to denote the selected Goldman-Tucker saddle point. 

This saddle point determines the index sets (on-support and off-support sets of coordinates) and 
separation constants of the analysis. 
Let $I = \supp(x^\star)$ and \(J=\supp(y^\star)\), 
with counts \(k_x=|I|\), \(\ell_x=|I^c|\), \(k_y=|J|\), and
\(\ell_y=|J^c|\).
Note that the support sizes \(k_x,k_y\) are nonzero. 
We define the smallest on-support probabilities: 
\[
    \underline x=\min_{i\in I}x_i^\star,
    \qquad
    \underline y=\min_{j\in J}y_j^\star,
\]
and the smallest off-support payoff slacks: 
\[
    \sigma_x=\min_{i\in I^c}s_{x,i},
    \qquad
    \sigma_y=\min_{j\in J^c}s_{y,j},
\]
where \(\sigma_x\) is defined when \(I^c\neq\varnothing\) and \(\sigma_y\) is defined when \(J^c\neq\varnothing\). 
Thus, \(\underline x,\underline y>0\), 
and every defined payoff slack minimum is positive.
Let $L > 0$ satisfy \(\lVert A\rVert_2\leq L\), and set
\begin{equation}
    \delta
    =
    \min\left\{
      \underline x,\ \underline y,\
      \frac{\sigma_x}{L}\ \text{if }I^c\neq\varnothing,\
      \frac{\sigma_y}{L}\ \text{if }J^c\neq\varnothing
    \right\},
    \label{eq:delta}
\end{equation}
where terms corresponding to empty off-support index sets (i.e., $| I | = n$ or $| J | = m$) are omitted.
Then, we have \(\delta>0\) and \(\delta\leq1\) because \(\underline x\leq1\) and $\underline{y} \leq 1$.
We call this scalar the \emph{Goldman-Tucker separation constant}
associated with the selected Goldman-Tucker saddle point and the chosen bound \(L\).
Informally, it records how decisively the Goldman-Tucker saddle point separates its supports from their off-support coordinates, which can be measured by the minimum of \emph{the least probability assigned to an on-support action} and \emph{the least normalized payoff slack of an off-support action}.

The following theorem restates \cref{thm:main-text}.
\begin{theorem}
    \label{thm:main}
    Let \(A\in\R^{m\times n}\), let \(L>0\) satisfy
    \(\lVert A\rVert_2\leq L\), and define \(\delta\) from a selected
    Goldman--Tucker strictly complementary saddle point as in
    \eqref{eq:delta}.  For every initialization
    \((x^0,y^0)\in\Delta_n\times\Delta_m\), every \(T\geq1\), and every
    fixed stepsize satisfying
    \begin{equation}
        0<\eta\leq\frac{\delta}{2\sqrt{2}\,L},
        \label{eq:stepsize}
    \end{equation}
    the averages \eqref{eq:averages} of the AltGDA iterates
    \eqref{eq:altgda} satisfy, in the duality gap \eqref{eq:gap},
    \begin{equation}
        \boxed{
        \Gap(\overline x^T,\overline y^T)
        \leq\frac{15}{2\eta T}.}
        \label{eq:main-rate}
    \end{equation}
\end{theorem}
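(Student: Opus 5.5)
The proof follows the five-step architecture of \cref{sec:architecture}; the steps are carried out in order.

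\emph{KKT representation and residual.} First, write each projection onto the simplex via its KKT conditions. This gives threshold scalars $\gamma_t,\lambda_t$ and multipliers $\mu_t,\rho_t\ge0$, complementary to $x^{t+1}$ and $y^{t+1}$, as in \eqref{eq:kkt-x-main}--\eqref{eq:kkt-y-main}. Substituting $\Delta x_t/\eta$ into $v_t^\top\Delta x_t$ and using $\one^\top\Delta x_t=0$ and $\mu_t^\top x^{t+1}=0$ yields $\eta v_t^\top\Delta x_t-\lVert\Delta x_t\rVert^2=\eta\mu_t^\top x^t$. Doing the same for $y$ gives the exact residual formula $r_t=\eta(\mu_t^\top x^t+\rho_t^\top y^t)\ge0$.

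\emph{Energy balance.} Next, expand $V_{t+1}-V_t$ for the alternation-corrected energy \eqref{eq:energy-main}. The steps are: write $\lVert x^{t+1}-x^\star\rVert^2-\lVert x^t-x^\star\rVert^2=2\Delta x_t^\top(x^{t+1}-x^\star)-\lVert\Delta x_t\rVert^2$, insert the KKT form, and use $\one^\top(x^{t+1}-x^\star)=0$. Then handle the $\mu_t$ terms by $\mu_t^\top(x^{t+1}-x^\star)=-\mu_t^\top x^\star$. The bilinear terms are rewritten via $A^\top y^\star=\nu\one+s_x$ and $Ax^\star=\nu\one-s_y$; combined with $s_x^\top x^\star=0$ and $s_y^\top y^\star=0$, this produces the $s_x^\top(x^t+x^{t+1})$ and $s_y^\top(y^t+y^{t+1})$ terms. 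The cross term $-\eta(y-y^\star)^\top A(x-x^\star)$ is chosen exactly so that the leftover $\eta$-bilinear terms from the alternation cancel. Collecting terms gives $V_{t+1}-V_t+\mathscr D_t=r_t$.

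\emph{Residual versus dissipation (main obstacle).} The key inequality is $r_t\le B_t-B_{t+1}+\tfrac12\mathscr D_t$, and I expect this to be the hardest step. I would split $\mu_t^\top x^t$ into the $I$ and $I^c$ blocks.
\begin{itemize}
\item On $I^c$: for coordinates deactivated at step $t$, the KKT conditions give $x^t_i=-(\Delta x_t)_i=-\eta(v_{t,i}-\gamma_t+\mu_{t,i})$. I would eliminate $\gamma_t$ through the on-support average, producing the payoff-difference vector $d_t^x$ and a nonpositive quadratic term $-\lVert(\Delta x_t)_{I^c}\rVert_{G_x}^2$.
\item The term $-\eta(d_t^x)^\top(\Delta x_t)_{I^c}$ is handled by summation by parts, giving $B_{x,t}-B_{x,t+1}$ plus $\eta(d_t^x-d_{t-1}^x)^\top x^t_{I^c}$. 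Since $d^x_t-d^x_{t-1}$ is controlled by $\eta L\cdot$(one-step movement), and $s_x\ge\sigma_x$ on $I^c$, this error is $\lesssim\eta^2L^2\sigma_x^{-1}s_x^\top(x^t+x^{t+1})$.
\item On $I$: $x^\star_i\ge\underline x$ bounds $\mu_{t,I}$ by $\mu_t^\top x^\star/\underline x$.
\end{itemize}
With $\eta\le\delta/(2\sqrt2L)$, the constants $\sqrt{1\pm1/|I|}\,\eta L/\delta\le\tfrac12$ then give the $\tfrac12\mathscr D_t$ bound. The delicate points are the cross term between the two blocks, which must cancel exactly, and the boundary case $t=0$ (where $d_{-1}^x=d_0^x$).

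\emph{Summation and gap.} Summing the key inequality against the energy balance gives $\tfrac12\sum_{t<T} r_t\le B_0-B_T+\tfrac12(V_0-V_T)$. Here $V_t$ is bounded because diameters are at most $\sqrt2$ and $\eta L\le1/(2\sqrt2)$, and $B_t$ is bounded because $\lVert d^x\rVert\lesssim L$ and $\eta L\le\delta$; hence $\sum_{t<T} r_t$ is bounded by an absolute constant. Finally, apply the projection (three-point) inequalities for each player against an arbitrary comparator $(x,y)$. As in \citet[Lemma 1]{nan2026convergence}, this gives the two-step descent inequality \eqref{eq:combined-shifted-main} with a bounded $\Phi_t$. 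Summing over $t$, dividing by $2\eta T$, and maximizing over $(x,y)$ using bilinearity yields $\Gap(\overline x^T,\overline y^T)\le C/(\eta T)$. Tracking the constants from diameter $2$, $\Phi$, and the residual budget should give $C=15/2$.
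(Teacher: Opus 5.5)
Your proposal is correct and follows essentially the same route as the paper: the KKT residual identity, the exact energy balance, the off-support reflection identity with storage $B_t$ and the exact cancellation of $e_t^x\Delta M_t^x$ against the centered on-support term, the finite residual budget, and the two-step projection inequality with $\Phi_t=\phi_t+\psi_t$. The paper makes explicit a few small points you leave implicit: the drift bound is $\lVert d_t^x-d_{t-1}^x\rVert_\infty\le\sqrt{1+1/k_x}\,L\lVert\Delta y_{t-1}\rVert_2\le\sqrt{1+1/k_x}\,\eta L^2$; the $y$-block uses the shifted contrast $d_{t+1}^y$; and $T=1$ needs the direct bound $\Gap(x^1,y^1)\le 2L\le 1/(\sqrt2\,\eta)$, because the telescoped sum is empty there.
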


In the subsequent sections, we assume a nonzero $A \neq 0$ (otherwise the matrix game is trivial). Note that we can choose an arbitrary \(L>0\) with
\(\lVert A\rVert_2\leq L\), and in practice this can be simply done by setting $L = \lVert A \rVert_2$.

\subsection{Projection and residual identities}
\label{sec:projection}

The first-order description of the two projections provides upper bounds for the following ``per-step iterate movement'': 
\begin{equation}
    \Delta x_t=x^{t+1}-x^t,\qquad
    \Delta y_t=y^{t+1}-y^t,
    \qquad
    v_t=-A^\top y^t,\qquad
    u_t=Ax^{t+1}
    \label{eq:increments}
\end{equation}
Here, $v_t$ and $u_t$ are shorthand notations for the signed payoff vectors that drive the movement of the iterates.
We use \(\Delta\) to denote the forward difference at some time
index: 
for any time-indexed quantity $a_t$, we write $\Delta a_t=a_{t+1}-a_t$.

The first-order description also provides a classic and useful residual representation, which we call the projection KKT representation of the AltGDA algorithm.
\begin{lemma}[Projection KKT representation of AltGDA]
    \label{lem:kkt}
    For every \(t \geq 0\), 
    there are scalars \(\gamma_t, \lambda_t\) and vectors \(\mu_t\in\R_+^n,\rho_t\in\R_+^m\) such that
    \begin{align}
        \Delta x_t
        &=
        \eta(v_t-\gamma_t\one+\mu_t),
        &
        \mu_t\odot x^{t+1}&=0,
        \label{eq:kkt-x}\\
        \Delta y_t
        &=
        \eta(u_t-\lambda_t\one+\rho_t),
        &
        \rho_t\odot y^{t+1}&=0.
        \label{eq:kkt-y}
    \end{align}
    Moreover, we have 
    \begin{equation}
        \lVert\Delta x_t\rVert_2\leq\eta L,
        \qquad
        \lVert\Delta y_t\rVert_2\leq\eta L.
        \label{eq:movement}
    \end{equation}
\end{lemma}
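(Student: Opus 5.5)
The plan is to read off the KKT conditions of the two Euclidean projections onto the simplex and then use nonexpansiveness of the projection for the movement bounds. For the $x$-update, $x^{t+1}$ is the unique minimizer of $\tfrac12\lVert z-(x^t+\eta v_t)\rVert_2^2$ over $z\in\Delta_n$, since $x^t-\eta A^\top y^t=x^t+\eta v_t$. The constraints are affine ($\one^\top z=1$, $z\geq0$), so linear constraint qualification holds and KKT is necessary and sufficient. Hence there exist a multiplier $\tilde\gamma\in\R$ for the equality constraint and $\tilde\mu\in\R^n_+$ for $z\geq0$ with
\[
x^{t+1}-(x^t+\eta v_t)+\tilde\gamma\one-\tilde\mu=0,\qquad \tilde\mu\odot x^{t+1}=0 .
\]
Rescaling $\gamma_t:=\tilde\gamma/\eta$ and $\mu_t:=\tilde\mu/\eta\in\R^n_+$, which is allowed because $\eta>0$ preserves sign and complementarity, gives exactly $\Delta x_t=\eta(v_t-\gamma_t\one+\mu_t)$ with $\mu_t\odot x^{t+1}=0$. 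The $y$-update is handled identically, with $y^{t+1}=\Pi_{\Delta_m}(y^t+\eta u_t)$ and $u_t=Ax^{t+1}$; this yields $\lambda_t$ and $\rho_t$. If one prefers to avoid general KKT theory, the explicit threshold form $\Pi_\Delta(w)=(w-\tau\one)_+$ gives the same representation: take $\gamma_t=\tau/\eta$ and $\mu_t=(\tau\one-w)_+/\eta$, where $w$ is the point being projected.

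For the movement bound, I use the fact that $x^t\in\Delta_n$, so $x^t=\Pi_{\Delta_n}(x^t)$. Nonexpansiveness of $\Pi_{\Delta_n}$ then gives
\[
\lVert\Delta x_t\rVert_2=\lVert\Pi_{\Delta_n}(x^t-\eta A^\top y^t)-\Pi_{\Delta_n}(x^t)\rVert_2\leq\eta\lVert A^\top y^t\rVert_2\leq\eta L\lVert y^t\rVert_2 .
\]
Because $y^t\in\Delta_m$, we have $\lVert y^t\rVert_2\leq\lVert y^t\rVert_1=1$, which gives $\lVert\Delta x_t\rVert_2\leq\eta L$. The same argument, using $y^t\in\Delta_m$ and $\lVert x^{t+1}\rVert_2\leq1$, gives $\lVert\Delta y_t\rVert_2\leq\eta L$. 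This also needs $\lVert A^\top\rVert_2=\lVert A\rVert_2\leq L$.

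No step is a real obstacle. The only point needing care is that the iterates stay feasible for all $t$. This holds by induction from $(x^0,y^0)\in\Delta_n\times\Delta_m$, since each update is a projection onto the simplex, and it is what justifies both $x^t=\Pi_{\Delta_n}(x^t)$ and the bound $\lVert y^t\rVert_2\leq1$.
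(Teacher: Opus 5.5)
Your proposal is correct and follows the same route as the paper's proof: KKT conditions for the Euclidean projection onto the simplex, with the multipliers scaled by $\eta$, then nonexpansiveness of $\Pi_{\Delta_n}$ at the fixed point $x^t=\Pi_{\Delta_n}(x^t)$ together with $\lVert y^t\rVert_2\leq\one^\top y^t=1$ (and symmetrically for $y$). Your extra remarks on constraint qualification, the explicit threshold formula, and feasibility of the iterates by induction are all valid. The paper uses these facts without stating them.
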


\begin{proof}
By the KKT conditions for the Euclidean projection
\(p^+=\Pi_{\Delta_d}(p+\eta g)\), we have 
\begin{equation}
    p^+-p=\eta(g-\gamma\one+\mu),
    \qquad
    \mu\geq0,\qquad
    \mu\odot p^+=0,
    \label{eq:projection-KKT-conditions}
\end{equation}
where \(\gamma\) is the (scalar) multiplier of the sum constraint and 
\(\mu\) is a vector corresponding to the multipliers of the sign constraints. 
Applying~\cref{eq:projection-KKT-conditions} with \((p,g)=(x^t,v_t)\) and \((p,g)=(y^t,u_t)\), we have~\cref{eq:kkt-x,eq:kkt-y}.

For the per-step iterate movement bound, we use the nonexpansive property of the projections and the definition of the induced $2$-norm: 
\[
    \lVert\Delta x_t\rVert_2
    =
    \bigl\lVert
      \Pi_{\Delta_n}(x^t-\eta A^\top y^t)-\Pi_{\Delta_n}(x^t)
    \bigr\rVert_2
    \leq\eta\lVert A^\top y^t\rVert_2
    \leq\eta L\lVert y^t\rVert_2
    \leq\eta L,
\]
where the last step uses
\(\lVert y^t\rVert_2\leq\one^\top y^t=1\).  The \(y\)-iterate movement bound follows identically.
\end{proof}

In~\cref{eq:kkt-x,eq:kkt-y}, the multiplier \(\mu_t\) is strictly positive only on those coordinates on which the next-step iterate touches the simplex boundary
($x_i^{t+1} = 0$), and likewise for \(\rho_t\). 

Next, we introduce a quantity called ``residual term'' in the proof: 
\begin{equation}
    r_t :=
    \eta v_t^\top\Delta x_t
    +\eta u_t^\top\Delta y_t
    -\lVert\Delta x_t\rVert_2^2
    -\lVert\Delta y_t\rVert_2^2.
    \label{eq:residual-definition}
\end{equation}
This is a key quantity in the convergence proof for AltGDA.
In the unconstrained case, each iterate movement equals \(\eta\) times the corresponding payoff vector and hence \(r_t\) vanishes. 
In the constrained case, it is much trickier to handle. 
The next lemma computes a more useful equivalent closed-form expression of \(r_t\).
Since $\eta>0$ and the probabilities and projection multipliers are
nonnegative, \cref{eq:residual-kkt} implies that $r_t>0$ if and only if
there is a coordinate $i$ with $x_i^t>0$ and $\mu_{t,i}>0$, or a
coordinate $j$ with $y_j^t>0$ and $\rho_{t,j}>0$.
By complementarity in~\cref{lem:kkt}, these conditions imply
$x_i^{t+1}=0$ or $y_j^{t+1}=0$, respectively.
Thus, the residual is strictly positive exactly when at least one player
has a previously positive coordinate reach the simplex boundary with a
strictly positive projection multiplier; merely touching the boundary
is not sufficient.
\begin{lemma}[Exact residual representation]
\label{lem:residual}
For every \(t\geq0\),
\begin{equation}
    r_t
    =
    \eta\left(
      \mu_t^\top x^t+\rho_t^\top y^t
    \right)
    \geq0.
    \label{eq:residual-kkt}
\end{equation}
\end{lemma}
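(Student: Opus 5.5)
The plan is to substitute the KKT representation of \cref{lem:kkt} directly into the definition \eqref{eq:residual-definition} and let complementarity do the work, treating the $x$-block and the $y$-block separately since they enter $r_t$ additively.

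For the $x$-block, \eqref{eq:kkt-x} gives $\eta v_t=\Delta x_t+\eta\gamma_t\one-\eta\mu_t$. Substituting this into the $x$-part of $r_t$ yields
\[
\eta v_t^\top\Delta x_t-\lVert\Delta x_t\rVert_2^2
=\eta\gamma_t\,\one^\top\Delta x_t-\eta\,\mu_t^\top\Delta x_t .
\]
The first term vanishes, because both $x^{t+1}$ and $x^t$ lie in $\Delta_n$, so $\one^\top\Delta x_t=1-1=0$. For the second term, complementarity $\mu_t\odot x^{t+1}=0$ gives $\mu_t^\top x^{t+1}=0$. Hence $-\eta\mu_t^\top\Delta x_t=-\eta\mu_t^\top x^{t+1}+\eta\mu_t^\top x^t=\eta\mu_t^\top x^t$.

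The $y$-block is handled identically. Using \eqref{eq:kkt-y}, one writes $\eta u_t=\Delta y_t+\eta\lambda_t\one-\eta\rho_t$. Then $\one^\top\Delta y_t=0$ and $\rho_t^\top y^{t+1}=0$, so this block contributes exactly $\eta\rho_t^\top y^t$. Adding the two blocks gives \eqref{eq:residual-kkt}.

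Nonnegativity is immediate: $\eta>0$, the multipliers satisfy $\mu_t,\rho_t\ge0$ by \cref{lem:kkt}, and $x^t,y^t\ge0$ because the iterates are feasible (the initialization is feasible and each step is a projection onto the simplex). There is no real obstacle here. The only point to be careful about is that the simplex constraint makes the threshold scalars $\gamma_t,\lambda_t$ drop out through $\one^\top\Delta x_t=\one^\top\Delta y_t=0$, and that the complementarity used is with respect to the new iterate $x^{t+1}$ (respectively $y^{t+1}$). That is precisely what leaves the old iterate $x^t$ (respectively $y^t$) in the final expression.
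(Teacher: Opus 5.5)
Your proposal is correct and follows essentially the same route as the paper. Both substitute the KKT representation into the definition of $r_t$, use $\one^\top\Delta x_t=\one^\top\Delta y_t=0$ to eliminate the threshold scalars, and use complementarity at the new iterate so that only $\eta\mu_t^\top x^t$ and $\eta\rho_t^\top y^t$ remain, with nonnegativity following from the signs of the multipliers and the feasibility of the iterates.
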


\begin{proof}
Since \(\one^\top\Delta x_t=0\), by~\cref{eq:kkt-x} we have 
\begin{align*}
    \eta v_t^\top\Delta x_t-\lVert\Delta x_t\rVert_2^2
    &=
    \eta\left(v_t-\frac{\Delta x_t}{\eta}\right)^\top
    \Delta x_t\\
    &=
    \eta(\gamma_t\one-\mu_t)^\top\Delta x_t\\
    &=
    -\eta\mu_t^\top(x^{t+1}-x^t)
    =
    \eta\mu_t^\top x^t,
\end{align*}
where the last equality follows because \(\mu_t^\top x^{t+1}=0\). 
By a symmetric proof, we can show that the \(y\)-player's identity holds as well. 
Furthermore, $r_t$ is nonnegative for any $t \geq 0$ because every factor in~\cref{eq:residual-kkt} is nonnegative.
\end{proof}

\subsection{An energy function for AltGDA}
\label{sec:energy}

Another important component in the proof is an energy function. 
A natural choice of the energy function for the minimax problems is the squared distance to a saddle point. 
To account for the asymmetry introduced by alternation, 
we add a bilinear cross term of size \(\eta\). 
With the Goldman-Tucker saddle point \((x^\star,y^\star)\) fixed,
define
\begin{equation}
    V_t
    =
    \lVert x^t-x^\star\rVert_2^2
    +\lVert y^t-y^\star\rVert_2^2
    -\eta(y^t-y^\star)^\top A(x^t-x^\star).
    \label{eq:energy}
\end{equation}
Alongside the energy, 
the analysis tracks two nonnegative quantities built from the current iterates and the Goldman-Tucker saddle point: 
the payoff slack-weighted probabilities
\[
    P_t^x=s_x^\top x^t,\qquad
    P_t^y=s_y^\top y^t,\qquad
    P_t=P_t^x+P_t^y,
\]
and the equilibrium probability-weighted multipliers 
\[
    E_t^x=\mu_t^\top x^\star,\qquad
    E_t^y=\rho_t^\top y^\star,\qquad
    E_t=E_t^x+E_t^y.
\]
In particular, taking the $x$-player side as an example, 
because \(s_x\) vanishes on \(I\), 
the quantity \(P_t^x\) measures the off-support iterate probabilities, with each coordinate weighted by its payoff slack.
Similarly, because \(x^\star\) vanishes outside \(I\), 
the quantity \(E_t^x\) measures the multipliers on the support, 
with each coordinate weighted by its equilibrium probability. 

We then define a quantity that combines $P_t$ and $E_t$: 
\begin{equation}
    \mathscr D_t
    =
    \eta(P_t+P_{t+1})+2\eta E_t. 
    \label{eq:dissipation}
\end{equation}
We call the above quantity the \emph{dissipation} of the $t$-th step, 
and all of these three quantities ($P_t$, $E_t$, and $\mathscr{D}_t$) are nonnegative by definition. 

The central identity of the analysis is that energy, dissipation, and residual are in exact balance.
\begin{lemma}[Exact energy identity]
\label{lem:energy}
For every \(t\geq0\),
\begin{equation}
    \boxed{V_{t+1}-V_t+\mathscr D_t=r_t.}
    \label{eq:energy-balance}
\end{equation}
\end{lemma}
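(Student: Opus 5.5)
The plan is a direct algebraic verification. I will work in the shifted coordinates $a_t:=x^t-x^\star$ and $b_t:=y^t-y^\star$, and use only three ingredients: the KKT representation of \cref{lem:kkt}, the slack definitions \eqref{eq:slacks}, and the simplex facts $\one^\top a_t=\one^\top b_t=0$. The idea is to compute both sides of \eqref{eq:energy-balance} as expressions in the bilinear quantities $b_s^\top A a_{s'}$ and in $P_t,P_{t+1},E_t$, and then match them. \cref{lem:residual} is not needed, because I keep $r_t$ in its defining form \eqref{eq:residual-definition}.

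\emph{Step 1 (quadratic part of $V$).} Write $a_t=a_{t+1}-\Delta x_t$. This gives
\[
\lVert a_{t+1}\rVert_2^2-\lVert a_t\rVert_2^2=2a_{t+1}^\top\Delta x_t-\lVert\Delta x_t\rVert_2^2 .
\]
Substituting $\Delta x_t=\eta(v_t-\gamma_t\one+\mu_t)$ removes the $\gamma_t$ term, since $\one^\top a_{t+1}=0$. Complementarity $\mu_t^\top x^{t+1}=0$ turns $\eta\mu_t^\top a_{t+1}$ into $-\eta E^x_t$. Next, expand $a_{t+1}^\top v_t=-(b_t+y^\star)^\top A a_{t+1}$ and use $A^\top y^\star=s_x+\nu\one$ together with $s_x^\top x^\star=0$. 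This yields
\[
(y^\star)^\top A a_{t+1}=P^x_{t+1}.
\]
The $y$-block is symmetric. Here $u_t=Aa_{t+1}+Ax^\star$ and $Ax^\star=\nu\one-s_y$, so $b_{t+1}^\top u_t=b_{t+1}^\top A a_{t+1}-P^y_{t+1}$.

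\emph{Step 2 (collect $V_{t+1}-V_t$).} Adding the cross-term change $-\eta(b_{t+1}^\top Aa_{t+1}-b_t^\top Aa_t)$ to Step 1, I expect
\[
V_{t+1}-V_t=\eta\bigl(b_{t+1}^\top Aa_{t+1}-2b_t^\top Aa_{t+1}+b_t^\top Aa_t\bigr)-2\eta P_{t+1}-2\eta E_t-\lVert\Delta x_t\rVert_2^2-\lVert\Delta y_t\rVert_2^2 .
\]

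\emph{Step 3 (right-hand side).} Compute $r_t-\mathscr D_t$ from the definitions \eqref{eq:residual-definition} and \eqref{eq:dissipation}. The squared-norm terms and the $-2\eta E_t$ term already match Step 2. It therefore remains to show
\[
\eta v_t^\top\Delta x_t+\eta u_t^\top\Delta y_t=\eta\bigl(b_{t+1}^\top Aa_{t+1}-2b_t^\top Aa_{t+1}+b_t^\top Aa_t\bigr)-\eta(P_{t+1}-P_t).
\]
For this I expand $v_t^\top\Delta x_t=-(b_t+y^\star)^\top A(a_{t+1}-a_t)$ and again use $(y^\star)^\top A a_s=P^x_s$. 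I also expand $u_t^\top\Delta y_t=(b_{t+1}-b_t)^\top A a_{t+1}+(Ax^\star)^\top(b_{t+1}-b_t)$ with $(Ax^\star)^\top b_s=-P^y_s$. The bilinear terms then combine to exactly the bracket above, and the slack terms telescope to $-(P_{t+1}-P_t)$.

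There is no conceptual obstacle. The only delicate point is the bookkeeping of the slack terms, namely that pairing $y^\star$ with $A a_s$, or $Ax^\star$ with $b_s$, gives exactly $\pm P_s$ with no constant left over. This relies on $\one^\top a_s=\one^\top b_s=0$, which kills the $\nu$-contributions, and on strict complementarity, which gives $s_x^\top x^\star=s_y^\top y^\star=0$. I would state these two facts as preliminary identities before running Steps 1–3.
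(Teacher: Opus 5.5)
Your proof is correct. Steps 1--3 check out, including the slack identities $(y^\star)^\top A a_s=P^x_s$ and $(Ax^\star)^\top b_s=-P^y_s$.

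It differs from the paper's proof in where you expand the squared distances. The paper sets $a=x^t-x^\star$, $p=\Delta x_t$ and expands at the old iterate, $\lVert a+p\rVert_2^2-\lVert a\rVert_2^2=2a^\top p+\lVert p\rVert_2^2$. The multipliers then contribute $2\eta\bigl[\mu_t^\top(x^t-x^\star)+\rho_t^\top(y^t-y^\star)\bigr]$, which still contains the collision term $\mu_t^\top x^t+\rho_t^\top y^t$. After rewriting the bilinear remainder through $P_{t+1}-P_t$ and the definition of $r_t$, the paper must invoke \cref{lem:residual} to replace that term by $r_t/\eta$.

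You expand at the new iterate, so complementarity $\mu_t^\top x^{t+1}=\rho_t^\top y^{t+1}=0$ acts immediately and the multipliers contribute only $-2\eta E_t$. The residual then enters solely through its definition \eqref{eq:residual-definition}. In effect, you fold the computation $\eta v_t^\top\Delta x_t-\lVert\Delta x_t\rVert_2^2=-\eta\mu_t^\top\Delta x_t=\eta\mu_t^\top x^t$ from the proof of \cref{lem:residual} into the main expansion. Your route is shorter and does not depend on that lemma. The paper's route instead shows the energy change explicitly in terms of the collision term that gives $r_t$ its interpretation.

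One small precision: $s_x^\top x^\star=s_y^\top y^\star=0$ is ordinary complementary slackness (e.g.\ $s_x^\top x^\star=(y^\star)^\top Ax^\star-\nu=0$) and holds at every saddle point. So the identity does not use strictness; strictness is needed only later, to get $\sigma_x,\sigma_y>0$.
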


\begin{proof}
Set
\[
    a=x^t-x^\star,\qquad
    b=y^t-y^\star,\qquad
    p=\Delta x_t,\qquad
    h=\Delta y_t.
\]
Because the sums over all coordinates of both $a$ and $b$ are zero, 
the KKT conditions~\cref{eq:kkt-x,eq:kkt-y} and the definitions of the payoff slack in~\cref{eq:slacks} imply 
\begin{align}
    a^\top p
    &=
    -\eta P_t^x-\eta b^\top Aa+\eta\mu_t^\top a,
    \label{eq:energy-ap}\\
    b^\top h
    &=
    -\eta P_t^y+\eta b^\top Aa+\eta b^\top Ap
    +\eta\rho_t^\top b.
    \label{eq:energy-bh}
\end{align}
Indeed, \(a^\top A^\top y^\star=a^\top s_x=P_t^x\) because
\(\one^\top a=0\) and \(s_x^\top x^\star=0\), 
and \(b^\top Ax^\star=-b^\top s_y=-P_t^y\) by the same arguments.

Expanding the two squared distances and the bilinear term in the definition of $V_t$ in~\cref{eq:energy}, 
and then using \cref{eq:energy-ap,eq:energy-bh}, 
we have 
\begin{align}
    V_{t+1}-V_t
    ={}&
    -2\eta P_t
    +2\eta(\mu_t^\top a+\rho_t^\top b)
    +\lVert p\rVert_2^2+\lVert h\rVert_2^2
    \notag\\
    &+\eta b^\top Ap-\eta h^\top A(a+p).
    \label{eq:energy-expanded}
\end{align}
Furthermore, it holds that 
\[
    P_{t+1}=P_t+y^{\star\top}Ap-h^\top Ax^\star,
\]
and, by the definition of $r_t$ in~\cref{eq:residual-definition},
\[
    -r_t
    =
    \eta y^{t\top}Ap-\eta h^\top Ax^{t+1}
    +\lVert p\rVert_2^2+\lVert h\rVert_2^2.
\]
Then, substituting \(y^t=y^\star+b\) and \(x^{t+1}=x^\star+a+p\) into these two expressions transforms \eqref{eq:energy-expanded} into
\begin{equation}
    V_{t+1}-V_t
    =
    -\eta(P_t+P_{t+1})
    +2\eta\left[
      \mu_t^\top(x^t-x^\star)
      +\rho_t^\top(y^t-y^\star)
    \right]-r_t.
    \label{eq:difference-of-energy}
\end{equation}
By Lemma~\ref{lem:residual} and the definition of $E_t$, the terms in the bracket equal \(r_t/\eta-E_t\).
By substituting \(r_t/\eta-E_t\) into~\cref{eq:difference-of-energy}, we prove~\cref{eq:energy-balance}.
\end{proof}

We also present two elementary bounds on the energy function.
These bounds show the energy function is bounded by some finite constants and are useful to conclude the main theorem.

We use a shorthand notation 
\[
    q=\eta L 
\]
to denote the per-step iterate movement scale (see~\cref{lem:kkt}). 
Since \(\delta\leq1\) and our selected stepsize satisfies~\cref{eq:stepsize}, we have:
\begin{equation}
    q\leq\frac1{2\sqrt2}.
    \label{eq:q-bound}
\end{equation}
The bilinear cross term in the definition of $V_t$ in~\cref{eq:energy} cannot overwhelm the squared distances: 
by Cauchy-Schwarz inequality and \(2ab \leq a^2+b^2\) for any scalars $a, b$, we have 
\begin{equation}
    V_t
    \geq
    \left(1-\frac q2\right)
    \left(
      \lVert x^t-x^\star\rVert_2^2
      +\lVert y^t-y^\star\rVert_2^2
    \right)
    \geq0.
    \label{eq:energy-lower}
\end{equation}
On the other hand, since the squared diameter of a simplex is at most \(2\), we can also upper bound the initial energy: 
\begin{equation}
    V_0\leq4+2q.
    \label{eq:energy-initial}
\end{equation}

Summing~\cref{eq:energy-balance} shows that the total dissipation equals the total residual plus two extra bounded energy terms. 
If each residual is bounded by a fraction of the dissipation
plus a telescoping term, 
then we can show that the total residual is bounded by a constant. 
The next section establishes this relation.

\subsection{residual is upper bounded by a fraction of dissipation}
\label{sec:reflection}

This section establishes an important relation between $r_t$ and $\mathscr{D}_t$
\begin{equation*}
    r_t\leq B_t-B_{t+1}+\tfrac12\mathscr D_t, 
\end{equation*}
where \(B_t\) is a uniformly bounded term. 
By~\cref{lem:residual}, the residual splits into the two player
blocks \(\eta\mu_t^\top x^t\) and \(\eta\rho_t^\top y^t\).
We next present the \(x\)-player's block in detail, and then show the results corresponding to $y$-player's block, which follow by the same argument as the \(x\)-player's block.

We first define some additional notations and an auxiliary lemma.

For a nonempty index set \(S\) and a vector \(g\), define 
\begin{equation}
    (\mathsf C_Sg)_i
    =
    \frac1{|S|}\sum_{j\in S}g_j-g_i,
    \qquad i\in S^c.
    \label{eq:contrast}
\end{equation}

\begin{lemma} 
    \label{lem:contrast}
    Let \(S\) have cardinality \(k\).  For \(i\in S^c\),
    \begin{equation}
        |(\mathsf C_Sg)_i|
        \leq
        \sqrt{1+\frac1k}\,\lVert g\rVert_2.
        \label{eq:contrast-norm}
    \end{equation}
    If \(h\in\R^k\) and \(\overline h=k^{-1}\one^\top h\), then for every
    \(i\in S\),
    \begin{equation}
        |\overline h-h_i|
        \leq
        \sqrt{1-\frac1k}\,\lVert h\rVert_2.
        \label{eq:centering-norm}
    \end{equation}
\end{lemma}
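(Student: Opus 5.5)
Both bounds are Cauchy--Schwarz inequalities applied to a single linear functional. In each case we write the quantity as \(w^\top g\) (or \(w^\top h\)) for an explicit coefficient vector \(w\), then compute \(\lVert w\rVert_2\) exactly. Note that in \eqref{eq:contrast-norm} the vector \(g\) lives in the ambient space containing \(S\) and \(i\), while in \eqref{eq:centering-norm} the vector \(h\) is indexed by the \(k\) elements of \(S\).

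For \eqref{eq:contrast-norm}, fix \(i\in S^c\) and set \(w=\frac1k\one_S-e_i\), so that \((\mathsf C_Sg)_i=w^\top g\) by \eqref{eq:contrast}. Since \(i\notin S\), the supports of \(\frac1k\one_S\) and \(e_i\) are disjoint. Hence
\[
\lVert w\rVert_2^2=k\cdot\frac1{k^2}+1=1+\frac1k,
\]
and Cauchy--Schwarz gives \(|(\mathsf C_Sg)_i|\le\sqrt{1+1/k}\,\lVert g\rVert_2\).

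For \eqref{eq:centering-norm}, fix \(i\in S\), identified with an index in \(\{1,\dots,k\}\), and set \(w=\frac1k\one-e_i\in\R^k\), so that \(\overline h-h_i=w^\top h\). Here the supports overlap at coordinate \(i\). That coordinate equals \(\frac1k-1\), and the remaining \(k-1\) coordinates equal \(\frac1k\). Therefore
\[
\lVert w\rVert_2^2=\Bigl(1-\frac1k\Bigr)^2+\frac{k-1}{k^2}=\frac{(k-1)^2+(k-1)}{k^2}=\frac{k-1}{k}=1-\frac1k.
\]
Equivalently, \(w\) is the projection of \(-e_i\) onto \(\one^\perp\), whose squared norm is \(1-\frac1k\). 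Cauchy--Schwarz then yields the claim. When \(k=1\) both sides vanish, which is consistent.

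There is no real obstacle here. The only point requiring care is the overlap/disjointness distinction between the two cases. This distinction produces the \(+\frac1k\) in \eqref{eq:contrast-norm} versus the \(-\frac1k\) in \eqref{eq:centering-norm}, and it is exactly the source of the factors \(\sqrt{1\pm1/|I|}\) that appear in \cref{eq:block-bound-x-main}.
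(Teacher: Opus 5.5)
Your proposal is correct and follows essentially the same route as the paper: write each quantity as an inner product with the coefficient vector $k^{-1}\one_S-e_i$, compute its squared norm ($1+1/k$ for $i\notin S$ by disjoint supports, $1-1/k$ for $i\in S$), and apply Cauchy--Schwarz. Your coordinatewise computation of the second norm is an expanded version of the paper's expansion $1/k-2/k+1$.
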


\begin{proof}
For \(i\notin S\),
\[
    (\mathsf C_Sg)_i
    =
    \ip{k^{-1}\one_S-e_i}{g},
\]
and the coefficient vector $k^{-1}\one_S-e_i, \, i \in S^c$ has squared norm \(1/k+1\). 
For
\(i\in S\),
\[
    \overline h-h_i
    =
    \ip{k^{-1}\one_S-e_i}{h},
\]
where the coefficient vector $k^{-1}\one_S-e_i, \, i \in S$ has squared norm $1/k - 2/k + 1 = 1 - 1/k$, because the entry at \(i\) is \(1/k-1\).  Combining both inner products with Cauchy--Schwarz inequality yields~\cref{eq:contrast-norm,eq:centering-norm}.
\end{proof}

\subsubsection{Upper bound the \texorpdfstring{\(x\)}{x}-player block}
\label{app:subsec:x-block}

Assume first that \(I^c\neq\varnothing\); the case \(I^c=\varnothing\) is easier and is handled at the end of this subsection. 
We introduce a set of notations corresponding to the off-support coordinates: 
\[
    O_x=I^c,\qquad
    z_t=x^t_{O_x},\qquad
    M_t^x=\one^\top z_t.
\]
In words, \(z_t\) is the restriction of the iterate $x^t$ to the off-support coordinates and \(M_t^x\) is its total probability over all off-support coordinates.
Furthermore, we define
\[
    d_t^x=\mathsf C_I(-A^\top y^t),\qquad
    e_t^x=\frac1{k_x}\sum_{i\in I}\mu_{t,i},\qquad
    G_x=I_{\ell_x}+\frac1{k_x}\one\one^\top.
\]
Here, \(d_t^x\) is the contrast of the payoff vectors on the off-support coordinates against the on-support average of the payoff vectors, 
\(e_t^x\) is the average of the multipliers over the on-support coordinates, and \(G_x\succeq I\) is a positive semidefinite matrix.

The following lemma provides a useful decomposition of the off-support component $\eta(\mu_t)_{O_x}^\top z_t$.
\begin{lemma} 
\label{lem:reflection-x}
For every \(t\geq0\),
\begin{align}
    G_x\Delta z_t
    &=
    -\eta d_t^x-\eta e_t^x\one+\eta(\mu_t)_{O_x},
    \label{eq:reflection-vector-x}\\
    \eta(\mu_t)_{O_x}^\top z_t
    &=
    -\lVert\Delta z_t\rVert_{G_x}^2
    -\eta(d_t^x)^\top\Delta z_t
    -\eta e_t^x\Delta M_t^x.
    \label{eq:reflection-scalar-x}
\end{align}
\end{lemma}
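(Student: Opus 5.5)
Both identities come from restricting the KKT representation of \cref{lem:kkt} to the off-support block and then eliminating the threshold scalar $\gamma_t$ with the on-support block. Sum-preservation $\one^\top\Delta x_t=0$ links the two blocks.

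\emph{Vector identity.} Restricting \cref{eq:kkt-x} to $O_x$ gives
\[
\Delta z_t=\eta\bigl((v_t)_{O_x}-\gamma_t\one+(\mu_t)_{O_x}\bigr).
\]
Averaging \cref{eq:kkt-x} over $I$ gives
\[
\tfrac1{k_x}\one^\top(\Delta x_t)_I=\eta\bigl(\overline{v}_{t,I}-\gamma_t+e_t^x\bigr),
\qquad \overline v_{t,I}=\tfrac1{k_x}\textstyle\sum_{i\in I}v_{t,i}.
\]
Since $\one^\top\Delta x_t=0$, we have $\one^\top(\Delta x_t)_I=-\Delta M_t^x=-\one^\top\Delta z_t$. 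Hence
\[
\eta\gamma_t=\eta\overline v_{t,I}+\eta e_t^x+\tfrac1{k_x}\one^\top\Delta z_t.
\]
Substituting this into the off-support equation gives
\[
\Delta z_t+\tfrac1{k_x}\one\one^\top\Delta z_t=\eta\bigl((v_t)_{O_x}-\overline v_{t,I}\one\bigr)-\eta e_t^x\one+\eta(\mu_t)_{O_x}.
\]
The left side is $G_x\Delta z_t$. By \cref{eq:contrast} with $g=v_t=-A^\top y^t$, the first bracket on the right is exactly $-\mathsf C_I(v_t)=-d_t^x$. This yields \cref{eq:reflection-vector-x}.

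\emph{Scalar identity.} Take the inner product of \cref{eq:reflection-vector-x} with $\Delta z_t$. The left side becomes $\lVert\Delta z_t\rVert_{G_x}^2$, and on the right the term $-\eta e_t^x\one^\top\Delta z_t$ equals $-\eta e_t^x\Delta M_t^x$. For the multiplier term, complementarity $\mu_t\odot x^{t+1}=0$ restricted to $O_x$ gives
\[
(\mu_t)_{O_x}^\top\Delta z_t=(\mu_t)_{O_x}^\top z_{t+1}-(\mu_t)_{O_x}^\top z_t=-(\mu_t)_{O_x}^\top z_t.
\]
Rearranging the resulting equation gives \cref{eq:reflection-scalar-x}.

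The only real bookkeeping step is the sign and normalization in eliminating $\gamma_t$. It has to produce precisely $G_x=I+\frac1{k_x}\one\one^\top$ and the contrast $\mathsf C_I$ with the sign convention of \cref{eq:contrast}. I would double-check this on a small example with $k_x=1$. Everything else is direct substitution.
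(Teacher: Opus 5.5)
Your proposal is correct and follows the paper's own proof: you eliminate $\gamma_t$ by summing the KKT identity over $I$ with $\one^\top\Delta x_t=0$, substitute into the off-support block to obtain $G_x\Delta z_t$ and $-d_t^x=-\mathsf C_I(v_t)$, and then pair with $\Delta z_t$, using $(\mu_t)_{O_x}^\top z_{t+1}=0$. Your signs and normalizations already check out, so the proposed $k_x=1$ sanity check is unnecessary.
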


\begin{proof}
By summing~\cref{eq:kkt-x} over \(I\), using
\(\one^\top\Delta x_t=0\), and rearranging terms, we have 
\[
    \gamma_t
    =
    \frac1{k_x}\sum_{i\in I}v_{t,i}
    +e_t^x+\frac{\Delta M_t^x}{\eta k_x}.
\]
Substituting the above expression into the KKT representations in~\cref{lem:kkt} proves~\cref{eq:reflection-vector-x}. 
By taking the inner product of~\cref{eq:reflection-vector-x} with \(\Delta z_t\), and using \((\mu_t)_{O_x}^\top z_{t+1}=0\), we obtain~\cref{eq:reflection-scalar-x}.
\end{proof}
Set \(d_{-1}^x=d_0^x\) and define the
storage
\begin{equation}
    B_{x,t}=\eta(d_{t-1}^x)^\top z_t.
    \label{eq:storage-x}
\end{equation}
Then, we have 
\begin{equation}
    -\eta(d_t^x)^\top\Delta z_t
    =
    B_{x,t}-B_{x,t+1}
    +\eta(d_t^x-d_{t-1}^x)^\top z_t.
    \label{eq:storage-telescope-x}
\end{equation}
For \(t\geq1\), linearity of $C_I$ gives
\[
    d_t^x-d_{t-1}^x
    =
    \mathsf C_I(-A^\top\Delta y_{t-1}).
\]
This difference is zero at \(t=0\) by the convention above.
Combining the iterate movement bound~\cref{eq:movement} with 
the bound~\cref{eq:contrast-norm} on $C_I$ yields
\begin{equation}
    \lVert d_t^x-d_{t-1}^x\rVert_\infty
    \leq
    \sqrt{1+\frac1{k_x}}\,
    \eta L^2.
    \label{eq:drift-x}
\end{equation}
We then connect the $\ell_1$ norm of $z_t$ to the dissipation term $P^x_t$ by leveraging the definition of $\sigma_x$: 
since \(s_{x,i}\geq\sigma_x\) for \(i\in O_x\) and \(s_x\) vanishes on
\(I\), strict complementarity gives us that 
\begin{equation}
    M_t^x\leq\frac{P_t^x}{\sigma_x}.
    \label{eq:outside-mass-x}
\end{equation}

Combining~\cref{eq:drift-x,eq:outside-mass-x} with~\cref{eq:storage-telescope-x}, we obtain an upper bound for $-\eta(d_t^x)^\top\Delta z_t$.

Next, we turn to the on-support component. 
Let
\[
    h_t^x=(\Delta x_t)_I,
    \qquad
    \overline h_t^x=\frac1{k_x}\one^\top h_t^x
\]
denote the on-support iterate movement and its mean. 
Since $\sum_{i \in I}(\Delta x_t)_i + \sum_{i \in I^c}(\Delta x_t)_i = 0$ and $(\mu_t)_I^\top x_I^{t+1} = 0$, we have 
\begin{equation}
    \Delta M_t^x=-\one^\top h_t^x,
    \qquad
    (\mu_t)_I^\top x_I^t=-(\mu_t)_I^\top h_t^x.
    \label{eq:mass-complementarity-x}
\end{equation}
Combining them yields the exact identity
\begin{equation}
    (\mu_t)_I^\top x_I^t-e_t^x\Delta M_t^x
    =
    (\mu_t)_I^\top
    \left(\overline h_t^x\one-h_t^x\right).
    \label{eq:cancellation-x}
\end{equation}
By the nonnegativity of \((\mu_t)_I\geq0\), 
the per-step iterate movement bound~\cref{eq:movement}, 
the bound~\cref{eq:centering-norm} for $(\overline{h}_t^x\one - h_t^x)$, and the lower bound
\(E_t^x=\mu_t^\top x^\star\geq\underline x\sum_{i\in I}\mu_{t,i}\), 
we have 
\begin{align}
    (\mu_t)_I^\top
    \left(\overline h_t^x\one-h_t^x\right)
    &\leq
    \sqrt{1-\frac1{k_x}}\,
    \lVert h_t^x\rVert_2
    \sum_{i\in I}\mu_{t,i}
    \notag\\
    &\leq
    \sqrt{1-\frac1{k_x}}\,
    \eta L\,\frac{E_t^x}{\underline x}.
    \label{eq:cancellation-bound-x}
\end{align}

Finally, we combine both off-support (\cref{eq:reflection-scalar-x}) and on-support (\cref{eq:cancellation-x}) components. 
Note that the term $\eta e_t^x\Delta M_t^x$ in~\cref{eq:reflection-scalar-x,eq:cancellation-x} can be exactly canceled, and $-\lVert\Delta z_t\rVert_{G_x}^2 \leq 0$ because of the positive semidefiniteness. 
By using the equations and inequalities mentioned above, in particular, \cref{eq:storage-telescope-x,eq:drift-x,eq:outside-mass-x,eq:cancellation-bound-x}, we attain an upper bound for the $x$-player block of $r_t$:
\begin{equation}
    \boxed{
    \begin{aligned}
    \eta\mu_t^\top x^t
    \leq{}&
    B_{x,t}-B_{x,t+1}\\
    &+
    \sqrt{1+\frac1{k_x}}\,
    \frac{\eta L^2}{\sigma_x}(\eta P_t^x)
    +
    \sqrt{1-\frac1{k_x}}\,
    \frac{\eta L}{\underline x}(\eta E_t^x).
    \end{aligned}}
    \label{eq:block-bound-x}
\end{equation}

At the end of this part, we consider \(I^c=\varnothing\).
In this case, we retain the definitions
\[
    e_t^x=\frac1{k_x}\sum_{i\in I}\mu_{t,i},
    \qquad
    h_t^x=(\Delta x_t)_I,
    \qquad
    \overline h_t^x=\frac1{k_x}\one^\top h_t^x,
\]
and set
\[
    M_t^x=\Delta M_t^x=B_{x,t}=0.
\]

In this case, only the last term in~\cref{eq:block-bound-x} is nonzero, and one can verify that~\cref{eq:block-bound-x} still holds by the same arguments as those showing~\cref{eq:cancellation-bound-x}.

\subsubsection{Upper bound the \texorpdfstring{\(y\)}{y}-player block}

The proof for the \(y\)-player block follows the same route as that for the $x$-player block.
Because the \(y\)-player's update at the \(t\)-th iteration is driven by
\(Ax^{t+1}\), 
there is a time-index change across the arguments.
Again, we first assume \(J^c\neq\varnothing\), 
and define
\[
    O_y=J^c,\qquad
    w_t=y^t_{O_y},\qquad
    M_t^y=\one^\top w_t,
\]
and
\[
    d_t^y=\mathsf C_J(Ax^t),\qquad
    e_t^y=\frac1{k_y}\sum_{j\in J}\rho_{t,j},\qquad
    G_y=I_{\ell_y}+\frac1{k_y}\one\one^\top.
\]
By eliminating \(\lambda_t\) exactly as in~\cref{lem:reflection-x}, we have 
\begin{align}
    G_y\Delta w_t
    &=
    -\eta d_{t+1}^y-\eta e_t^y\one+\eta(\rho_t)_{O_y},
    \label{eq:reflection-vector-y}\\
    \eta(\rho_t)_{O_y}^\top w_t
    &=
    -\lVert\Delta w_t\rVert_{G_y}^2
    -\eta(d_{t+1}^y)^\top\Delta w_t
    -\eta e_t^y\Delta M_t^y.
    \label{eq:reflection-scalar-y}
\end{align}
Considering the time-index changes, we define the telescoping term for the $y$-player: 
\begin{equation}
    B_{y,t}=\eta(d_t^y)^\top w_t.
    \label{eq:storage-y}
\end{equation}
Then, we can again write a decomposition of the term $-\eta(d_{t+1}^y)^\top\Delta w_t$: 
\begin{equation}
    -\eta(d_{t+1}^y)^\top\Delta w_t
    =
    B_{y,t}-B_{y,t+1}
    +\eta(d_{t+1}^y-d_t^y)^\top w_t,
    \label{eq:storage-telescope-y}
\end{equation}
and 
\[
    d_{t+1}^y-d_t^y=\mathsf C_J(A\Delta x_t).
\]
Hence, as in~\cref{eq:drift-x},
\begin{equation}
    \lVert d_{t+1}^y-d_t^y\rVert_\infty
    \leq
    \sqrt{1+\frac1{k_y}}\,\eta L^2.
    \label{eq:drift-y}
\end{equation}

For the on-support component, we symmetrically define 
\[
    h_t^y=(\Delta y_t)_J,\qquad
    \overline h_t^y=\frac1{k_y}\one^\top h_t^y.
\]
Then, we have 
\[
    \Delta M_t^y=-\one^\top h_t^y,\qquad
    (\rho_t)_J^\top y_J^t=-(\rho_t)_J^\top h_t^y,
\]
and therefore 
\begin{equation}
    (\rho_t)_J^\top y_J^t-e_t^y\Delta M_t^y
    =
    (\rho_t)_J^\top
    \left(\overline h_t^y\one-h_t^y\right).
    \label{eq:cancellation-y}
\end{equation}
Then, we can show the following two upper bounds for the $y$-player block: 
\[
    M_t^y\leq\frac{P_t^y}{\sigma_y},
    \qquad
    E_t^y\geq
    \underline y\sum_{j\in J}\rho_{t,j}.
\]
Combining these two bounds with~\cref{eq:reflection-scalar-y,eq:storage-telescope-y,eq:drift-y,eq:cancellation-y} yields 
\begin{equation}
    \boxed{
    \begin{aligned}
    \eta\rho_t^\top y^t
    \leq{}&
    B_{y,t}-B_{y,t+1}\\
    &+
    \sqrt{1+\frac1{k_y}}\,
    \frac{\eta L^2}{\sigma_y}(\eta P_t^y)
    +
    \sqrt{1-\frac1{k_y}}\,
    \frac{\eta L}{\underline y}(\eta E_t^y).
    \end{aligned}}
    \label{eq:block-bound-y}
\end{equation}
When \(J^c=\varnothing\), we define 
\[
    e_t^y=\frac1{k_y}\sum_{j\in J}\rho_{t,j},
    \qquad
    h_t^y=(\Delta y_t)_J,
    \qquad
    \overline h_t^y=\frac1{k_y}\one^\top h_t^y,
\]
and set \(B_{y,t}=0\). 
Again, it can be verified that~\cref{eq:block-bound-y} still holds by the same arguments as those showing the upper bound involving $E^y_t$.

\subsubsection{Combining the two blocks}

By choosing an appropriate stepsize satisfying~\cref{eq:stepsize}, we can make the coefficients before the dissipation terms be at most $\frac{1}{2}$.

Whenever the set of the off-support coordinates is nonempty, 
the definition of \(\delta\) in~\cref{eq:delta} gives
\(\eta L^2/\sigma_x\leq1/(2\sqrt2)\), and
\(\sqrt{1+1/k_x}\leq\sqrt2\). 
The same bounds hold on the \(y\)-player block. 
Hence, we have 
\begin{equation}
    \sqrt{1+\frac1{k_x}}\,
    \frac{\eta L^2}{\sigma_x}
    \leq\frac12,
    \qquad
    \sqrt{1+\frac1{k_y}}\,
    \frac{\eta L^2}{\sigma_y}
    \leq\frac12.
    \label{eq:absorb-P}
\end{equation}
Similarly, \(\eta L\leq\delta/(2\sqrt2)\) and $\delta \leq \min\{ \underline{x}, \underline{y} \}$ give
\begin{equation}
    \sqrt{1-\frac1{k_x}}\,
    \frac{\eta L}{\underline x}
    \leq\frac1{2\sqrt2}<\frac12,
    \qquad
    \sqrt{1-\frac1{k_y}}\,
    \frac{\eta L}{\underline y}
    \leq\frac1{2\sqrt2}<\frac12.
    \label{eq:absorb-E}
\end{equation}
Let
\[
    B_t:=B_{x,t}+B_{y,t}
\]
be the combined telescoping term. 
Combining the block bounds \cref{eq:block-bound-x,eq:block-bound-y} and 
applying~\cref{eq:absorb-P,eq:absorb-E} provides the important bound we need: 
\begin{align}
    r_t
    &\leq
    B_t-B_{t+1}
    +\frac{\eta}{2}P_t+\frac{\eta}{2}E_t
    \notag\\
    &\leq
    B_t-B_{t+1}+\frac12\mathscr D_t, 
    \label{eq:pointwise-bound}
\end{align}
where the second inequality holds by the definition of the dissipation in~\cref{eq:dissipation} and the nonnegativity of $P_{t+1}$ and $E_t$: 
\[
    \frac12\mathscr D_t
    -
    \left(
      \frac{\eta}{2}P_t+\frac{\eta}{2}E_t
    \right)
    =
    \frac{\eta}{2}P_{t+1}+\frac{\eta}{2}E_t
    \geq0.
\]

The term $B_t$ is uniformly bounded: 
Since each contrast payoff vector satisfies
\(\lVert d_t^x\rVert_\infty\leq\sqrt{1+1/k_x}\,L\) by
\eqref{eq:contrast-norm}, and each off-support probability is at most one, we have 
\[
    |B_{x,t}|
    \leq
    \sqrt{1+\frac1{k_x}}\,q
    \leq\sqrt2\,q,
\]
and similarly \(|B_{y,t}|\leq\sqrt2\,q\). 
Hence, 
\begin{equation}
    |B_t|\leq2\sqrt2\,q.
    \label{eq:storage-bound}
\end{equation}

\begin{remark} 
Note that the fixed index sets \(I\) and \(J\) are supports of the
Goldman-Tucker saddle point, not of the current iterates. 
The arguments in this section hold when the iterate coordinates repeatedly leave and reenter the simplex boundary.
\end{remark}

\subsection{Finite residual budget}
\label{sec:budget}

By combining the per-step bound~\cref{eq:pointwise-bound} with the exact energy balance~\cref{eq:energy-balance}, we can show that the total budget for the residual term is finite, i.e., the residual term is summable. 
Let
\[
    R_T=\sum_{t=0}^{T-1}r_t,
    \qquad
    \mathcal D_T=\sum_{t=0}^{T-1}\mathscr D_t
\]
denote the accumulated residual and dissipation. 
Summing up the per-step bound~\cref{eq:pointwise-bound} and then
applying the bound~\cref{eq:storage-bound} yields 
\begin{equation}
    R_T
    \leq
    B_0-B_T+\frac12\mathcal D_T
    \leq
    4\sqrt2\,q+\frac12\mathcal D_T.
    \label{eq:residual-absorption}
\end{equation}
The exact energy balance~\cref{eq:energy-balance}, summed over the
same range, provides 
\begin{equation}
    \mathcal D_T=R_T-V_T+V_0.
    \label{eq:dissipation-sum}
\end{equation}
By substituting~\cref{eq:dissipation-sum} into~\cref{eq:residual-absorption} and solving for \(R_T\), we have 
\[
    R_T
    \leq
    8\sqrt2\,q+V_0-V_T.
\]
Using the energy bounds~\cref{eq:energy-lower,eq:energy-initial}, we prove 
\begin{equation}
    \boxed{
    R_T\leq4+(2+8\sqrt2)q.}
    \label{eq:residual-budget}
\end{equation}
This inequality shows that the total contribution of $r_t$ or the ``projection collisions'' is bounded by an absolute constant, 
for every horizon $T$ and every initialization $(x^0, y^0)$.

\subsection{From finite residual budget to the rate on the averaged gap}
\label{sec:gap}

It remains to bound the duality gap of the averages of the iterates by the accumulated residual. 
This can be done by using the standard projection inequality: if
\(p^+=\Pi_C(p+\eta g)\) for a closed convex set \(C\), then for every
\(z\in C\),
\begin{equation}
    \eta g^\top(z-p^+)
    \leq
    \frac12\left(
      \lVert p-z\rVert_2^2
      -\lVert p^+-z\rVert_2^2
      -\lVert p^+-p\rVert_2^2
    \right).
    \label{eq:projection-vi}
\end{equation}

Fix arbitrary comparators \(x\in\Delta_n\) and \(y\in\Delta_m\), and
let
\[
    g_t:=y^\top Ax^t-(y^t)^\top Ax.
\]
Averaging \(g_t\) over \(t=1,\dots,T\) and taking the supremum over comparators produces 
\(\Gap(\overline x^T,\overline y^T)\).

Because the two players update in alternation, 
we use the telescoping potential functions that pair the current \(x\)-iterate 
\emph{once with the current} and \emph{once with the preceding
\(y\)-iterate}: 
for \(t\geq1\), define
\begin{align}
    \phi_t
    &=
    \frac12\lVert x^t-x\rVert_2^2
    +\frac12\lVert y^t-y\rVert_2^2
    +\eta(y^t)^\top Ax,
    \label{eq:phi}\\
    \psi_t
    &=
    \frac12\lVert x^t-x\rVert_2^2
    +\frac12\lVert y^{t-1}-y\rVert_2^2
    -\frac12\lVert y^t-y^{t-1}\rVert_2^2,
    \label{eq:psi}\\
    \Phi_t&=\phi_t+\psi_t.
    \label{eq:Phi}
\end{align}

Applying~\cref{eq:projection-vi} to the \(x\)-update with comparator
\(x \in \Delta_n\) and to the \(y\)-update with comparator \(y \in \Delta_m\), 
and assembling the bilinear terms, it follows that 
\begin{equation}
    \eta g_{t+1}
    \leq
    \phi_t-\phi_{t+1}
    +\eta u_t^\top\Delta y_t
    -\frac12\left(
      \lVert\Delta x_t\rVert_2^2
      +\lVert\Delta y_t\rVert_2^2
    \right).
    \label{eq:shifted-one}
\end{equation}
Instead, pairing the current \(x\)-update with the preceding \(y\)-update leads to 
\begin{equation}
    \eta g_t
    \leq
    \psi_t-\psi_{t+1}
    +\eta v_t^\top\Delta x_t
    -\frac12\left(
      \lVert\Delta x_t\rVert_2^2
      +\lVert\Delta y_t\rVert_2^2
    \right).
    \label{eq:shifted-two}
\end{equation}
Adding up~\cref{eq:shifted-one,eq:shifted-two} yields the residual term in~\cref{eq:residual-definition}:
\begin{equation}
    \eta(g_t+g_{t+1})
    \leq
    \Phi_t-\Phi_{t+1}+r_t,
    \qquad t\geq1.
    \label{eq:combined-shifted}
\end{equation}

For \(T\geq2\), summing \eqref{eq:combined-shifted} from \(t=1\) to
\(T-1\) counts every interior \(g_t\) twice and the endpoints once, so
\begin{equation}
    2\eta\sum_{t=1}^Tg_t
    \leq
    \Phi_1-\Phi_T
    +\sum_{t=1}^{T-1}r_t
    +\eta(g_1+g_T).
    \label{eq:gap-telescope}
\end{equation}
The following bounds show that the endpoint quantities are bounded uniformly over the comparators:
\begin{equation}
    \Phi_1\leq4+q,\qquad
    -\Phi_T\leq q+\frac{q^2}{2},\qquad
    \eta(g_1+g_T)\leq4q, 
    \label{eq:endpoints}
\end{equation}
where the first bound follows from the simplex squared-diameter bound $2$ together with \(|(y^1)^\top Ax|\leq L\),  the second inequality holds because the only possibly negative terms in \(\Phi_T\): \(-\lVert\Delta y_{T-1}\rVert_2^2/2\) and \(\eta(y^T)^\top Ax\) are bounded below by \(-q^2/2\) and \(-q\) respectively, 
and the third bound uses \(|g_t|\leq2L\).

Because every residual is nonnegative by Lemma~\ref{lem:residual}, we have 
\[
    \sum_{t=1}^{T-1}r_t\leq R_T,
\]
and then we can apply the finite budget bound~\cref{eq:residual-budget}. 
Combining it with the endpoint bounds~\cref{eq:endpoints} in~\cref{eq:gap-telescope}, and then using \(q\leq1/(2\sqrt2)\) from~\cref{eq:q-bound}, we obtain
\begin{align}
    2\eta\sum_{t=1}^Tg_t
    &\leq
    8+(8+8\sqrt2)q+\frac{q^2}{2}
    \notag\\
    &\leq
    12+2\sqrt2+\frac1{16}
    =
    \frac{193}{16}+2\sqrt2
    <15.
    \label{eq:final-constant}
\end{align}
By dividing both sides of~\cref{eq:final-constant} by \(2\eta T\), taking the supremum over the comparators, and using the bilinearity
of the pairing $g_t$, we have 
\[
    \Gap(\overline x^T,\overline y^T)
    \leq\frac{15}{2\eta T},
\]
which completes~\cref{thm:main} for \(T\geq2\).

For \(T=1\), the average is just the first iterate, and the direct bound suffices:
\[
    \Gap(x^1,y^1)
    \leq2L
    =
    \frac{2q}{\eta}
    \leq
    \frac1{\sqrt2\,\eta}
    <
    \frac{15}{2\eta}.
\]
Therefore, \cref{thm:main} is proven for every \(T\geq1\).

\section{Lyapunov-LMI Details}\label{sec:appendix-lmi}
In this appendix we describe the Lyapunov LMI that is at the core of the PEP
framework which motivated the convergence analysis of \cref{thm:main-text}. Also we 
present a proof of 
\cref{thm:c123-lmi-conditional-rate}, which is conditional on having a exactly feasible solution to the LMI in consideration. Throughout, the setting consists of compact convex
sets \(\mathcal X\subseteq\R^n\) and \(\mathcal Y\subseteq\R^m\) of
radius \(D\), a payoff matrix normalized by
\(\lVert A\rVert_2\leq1\), and AltGDA with projections onto \(\mathcal X\) and \(\mathcal Y\).

\paragraph{Local notation.}
Notation that we use in this appendix is as follows.
\(\mathbb S^d\) (respectively \(\mathbb S^d_+\)) denotes the symmetric
(respectively symmetric positive semidefinite) \(d\times d\) matrices,
and \(M\succeq0\) means \(M\in\mathbb S^d_+\).  The symmetric outer
product of \(u,v\in\R^d\) is
\(\boxdot(u,v)=\tfrac12\left(uv^\top+vu^\top\right)\).
For a closed convex set \(C\), \(\iota_C\) is its indicator function,
so \(\partial\iota_C(z)\) is the normal cone to \(C\) at \(z\).
Several symbols reuse letters used elsewhere in the paper, always
with a new, locally defined meaning.  The starred pair
\((x^\star,y^\star)\) is an arbitrary comparator in
\(\mathcal X\times\mathcal Y\), not the strictly complementary saddle
point used to prove \cref{thm:main-text}.  The symbols \(\mu\) and
\(\lambda\) denote LMI variables, \emph{not} the projection multipliers of
Appendix~\ref{sec:appendix-proof}.  The vectors
\(\bar\delta,\underline\delta\) are projection residuals and they are not related to the separation parameter \(\delta\).

\subsection{Interpolation results}

In this section and the next, we will be using the following interpolation results from \citealp{bousselmi2024interpolation} and \citealp[Theorem~3.6]{taylor2017exact}, respectively.

\begin{lemma}[{Interpolation of a matrix with bounded singular value,
\citealp{bousselmi2024interpolation}}]
\label{lem:matrix-interpolation}
Consider the sets of pairs
\(\{(x^i,p^i)\}_{i\in\{1,\dots,T_1\}}\subseteq\R^n\times\R^m\) and
\(\{(y^j,q^j)\}_{j\in\{1,\dots,T_2\}}\subseteq\R^m\times\R^n\), and
define the matrices
\(X=[x^1\mid\cdots\mid x^{T_1}]\in\R^{n\times T_1}\),
\(P=[p^1\mid\cdots\mid p^{T_1}]\in\R^{m\times T_1}\),
\(Y=[y^1\mid\cdots\mid y^{T_2}]\in\R^{m\times T_2}\), and
\(Q=[q^1\mid\cdots\mid q^{T_2}]\in\R^{n\times T_2}\).  Then there
exists a matrix \(A\in\R^{m\times n}\) with maximum singular value
\(\sigma_{\textup{max}}(A)\leq L\) such that \(p^i=Ax^i\) for all
\(i\in\{1,\dots,T_1\}\) and \(q^j=A^\top y^j\) for all
\(j\in\{1,\dots,T_2\}\) if and only if
\begin{align*}
 & X^\top Q=P^\top Y,\\
 & L^2X^\top X-P^\top P\succeq0,\\
 & L^2Y^\top Y-Q^\top Q\succeq0.
\end{align*}
\end{lemma}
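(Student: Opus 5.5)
Necessity is a direct computation; for sufficiency, I would build the operator \(A\) blockwise on the ranges of \(X\) and \(Y\) and then fill the one unconstrained block with Parrott's norm-preserving completion theorem.

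\emph{Necessity.} If \(A\) exists with \(AX=P\), \(A^\top Y=Q\) and \(\lVert A\rVert_2\le L\), then \(X^\top Q=X^\top A^\top Y=(AX)^\top Y=P^\top Y\). Moreover,
\[
L^2X^\top X-P^\top P=X^\top(L^2I-A^\top A)X\succeq0,
\]
and symmetrically \(L^2Y^\top Y-Q^\top Q=Y^\top(L^2I-AA^\top)Y\succeq0\), since \(L^2I-A^\top A\) and \(L^2I-AA^\top\) are positive semidefinite.

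\emph{Sufficiency: partial maps.} Let \(U=\operatorname{range}(X)\subseteq\R^n\) and \(W=\operatorname{range}(Y)\subseteq\R^m\).

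\begin{itemize}
\item If \(Xc=0\), the second condition gives \(\lVert Pc\rVert_2^2\le L^2\lVert Xc\rVert_2^2=0\). Hence \(A_0\colon Xc\mapsto Pc\) is a well-defined linear map \(U\to\R^m\) with \(\lVert A_0u\rVert_2\le L\lVert u\rVert_2\).
\item In the same way, \(B_0\colon Yd\mapsto Qd\) is a well-defined linear map \(W\to\R^n\) with norm at most \(L\).
\item The first condition gives compatibility of the two maps:
\[
\ip{Yd}{A_0Xc}=d^\top Y^\top Pc=d^\top Q^\top Xc=\ip{B_0Yd}{Xc}.
\]
\end{itemize}

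The task is therefore to find \(A\) with \(A|_U=A_0\), \(A^\top|_W=B_0\) and \(\lVert A\rVert_2\le L\).

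\emph{Sufficiency: block form.} Decompose \(\R^n=U\oplus U^\perp\) and \(\R^m=W\oplus W^\perp\), and write \(A\) as a \(2\times2\) operator block matrix \(\begin{pmatrix}A_{11}&A_{12}\\A_{21}&A_{22}\end{pmatrix}\), with rows indexed by \((W,W^\perp)\) and columns by \((U,U^\perp)\).

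\begin{itemize}
\item Prescribing \(A|_U=A_0\) fixes the first block column \(\begin{pmatrix}A_{11}\\A_{21}\end{pmatrix}\), namely \(P_WA_0\) and \(P_{W^\perp}A_0\).
\item Prescribing \(A^\top|_W=B_0\) fixes the first block row \(\begin{pmatrix}A_{11}&A_{12}\end{pmatrix}\) through its adjoint.
\item The compatibility identity above says exactly that the two prescriptions of \(A_{11}\), namely \(P_WA_0|_U\) and \((P_UB_0)^\top\), coincide.
\item Any choice of \(A_{22}\) then yields an \(A\) satisfying both interpolation identities.
\item The norm bounds on \(A_0\) and \(B_0\) say that the known column and the known row each have operator norm at most \(L\).
\end{itemize}

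\emph{Main obstacle: the completion step.} It remains to choose \(A_{22}\) so that \(\lVert A\rVert_2\le L\). This is precisely Parrott's theorem on norm-preserving completions: for any partial block matrix whose known block column and known block row both have norm at most \(L\), there exists \(A_{22}\) making the full matrix have norm at most \(L\).

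\begin{itemize}
\item I would invoke Parrott's theorem directly.
\item For a self-contained argument, I would use the explicit Davis--Kahan--Weinberger formula
\[
A_{22}=-K A_{11}^\top M,
\]
where \(A_{12}=K(L^2I-A_{11}^\top A_{11})^{1/2}\) and \(A_{21}=(L^2I-A_{11}A_{11}^\top)^{1/2}M\) with contractions \(K,M\). Such \(K\) and \(M\) exist by the Douglas factorization lemma applied to the row and column norm bounds.
\item Verifying \(L^2I-A^\top A\succeq0\) for this choice is the only genuinely technical computation.
\item I expect the delicate points to be the pseudo-inverse and square-root manipulations when \(L^2I-A_{11}^\top A_{11}\) is singular.
\end{itemize}

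All spaces are finite-dimensional, so no closure or compactness issues arise.
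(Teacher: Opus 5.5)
The paper gives no proof of this lemma. It is quoted from \citet{bousselmi2024interpolation} and used as a black box: necessity is applied on actual AltGDA windows, and sufficiency is used to realize a matrix from Gram data in the PEP reformulations.

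Your argument is a correct, self-contained proof, modulo Parrott's theorem. It also makes clear where each hypothesis enters:
\begin{itemize}
\item the two semidefinite conditions make $A_0\colon Xc\mapsto Pc$ and $B_0\colon Yd\mapsto Qd$ well defined on $\operatorname{range}X$ and $\operatorname{range}Y$, with norm at most $L$;
\item the identity $X^\top Q=P^\top Y$ makes the two prescriptions of the shared block $A_{11}$ coincide;
\item the only remaining analytic content is the norm-preserving completion. It applies because the known block column is $A_0$ and the adjoint of the known block row is $B_0$.
\end{itemize}
Citing Parrott (or Davis--Kahan--Weinberger) at that point closes the proof, including degenerate cases where some of the subspaces are trivial.

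If you write out the explicit completion, fix one slip. The two Douglas factorizations are attached to the wrong off-diagonal blocks, so as written the dimensions do not match: $K(L^2I-A_{11}^\top A_{11})^{1/2}$ has domain $U$, whereas $A_{12}$ acts on $U^\perp$. The correct assignment is:
\begin{itemize}
\item the row bound $A_{11}A_{11}^\top+A_{12}A_{12}^\top\preceq L^2I_W$ gives $A_{12}=(L^2I_W-A_{11}A_{11}^\top)^{1/2}M$ with a contraction $M\colon U^\perp\to W$;
\item the column bound gives $A_{21}=K(L^2I_U-A_{11}^\top A_{11})^{1/2}$ with a contraction $K\colon U\to W^\perp$;
\item then $A_{22}=-KA_{11}^\top M$.
\end{itemize}
With this choice, the verification you expect to be delicate is immediate:
\[
\begin{pmatrix}A_{11}&A_{12}\\A_{21}&A_{22}\end{pmatrix}
=\begin{pmatrix}I&0\\0&K\end{pmatrix}
\underbrace{\begin{pmatrix}A_{11}&(L^2I-A_{11}A_{11}^\top)^{1/2}\\(L^2I-A_{11}^\top A_{11})^{1/2}&-A_{11}^\top\end{pmatrix}}_{=:J}
\begin{pmatrix}I&0\\0&M\end{pmatrix}.
\]
The intertwining identity $A_{11}^\top(L^2I-A_{11}A_{11}^\top)^{1/2}=(L^2I-A_{11}^\top A_{11})^{1/2}A_{11}^\top$ gives $J^\top J=L^2I$, so the norm is at most $L$.

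In finite dimensions, Douglas's lemma is just a pseudo-inverse. For example, take $M=D^\dagger A_{12}$ with $D=(L^2I_W-A_{11}A_{11}^\top)^{1/2}$. The bound $A_{12}A_{12}^\top\preceq D^2$ forces $\operatorname{range}A_{12}\subseteq\operatorname{range}D$, so $DM=A_{12}$, and $MM^\top\preceq D^\dagger D^2D^\dagger\preceq I$. No singular-case analysis is needed.
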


\begin{lemma}[{Interpolation of a compact convex set with bounded
radius, \citealp[Theorem~3.6]{taylor2017exact}}]
\label{lem:pep-set-interp}
Let \(\mathcal I\) be a finite index set and
\(\{(z^i,g^i)\}_{i\in\mathcal I}\subseteq\R^d\times\R^d\).  There
exists a compact convex set \(C\subseteq\R^d\) contained in the ball
of radius \(R\) about the origin with \(z^i\in C\) and
\(g^i\in\partial\iota_C(z^i)\) for all \(i\in\mathcal I\) if and
only if
\begin{align*}
 & (g^j)^\top(z^i-z^j)\leq0,\quad i,j\in\mathcal I,\\
 & \lVert z^i\rVert_2^2\leq R^2,\quad i\in\mathcal I.
\end{align*}
\end{lemma}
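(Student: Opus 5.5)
We prove the two directions separately. Necessity is immediate from definitions; for sufficiency the natural candidate is the convex hull of the data points.

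\emph{Necessity.} Suppose such a set \(C\) exists. Since \(C\) lies in the ball of radius \(R\) about the origin and \(z^i\in C\), we get \(\lVert z^i\rVert_2^2\leq R^2\) for every \(i\in\mathcal I\). The normal cone is
\[
\partial\iota_C(z^j)=\{g: g^\top(z-z^j)\leq 0\ \text{for all } z\in C\}.
\]
Applying this with \(z=z^i\in C\) gives \((g^j)^\top(z^i-z^j)\leq0\) for all \(i,j\in\mathcal I\).

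\emph{Sufficiency.} Assume both families of inequalities hold, and set \(C:=\operatorname{conv}\{z^i: i\in\mathcal I\}\). If \(\mathcal I=\varnothing\), any nonempty compact convex subset of the ball works trivially, so assume \(\mathcal I\neq\varnothing\). We then check three things.
\begin{itemize}
\item \(C\) is compact and convex, being the convex hull of finitely many points.
\item \(C\) lies in the ball of radius \(R\): the closed ball is convex and contains every \(z^i\), so it contains their convex hull.
\item Each \(z^j\) belongs to \(C\), and \(g^j\in\partial\iota_C(z^j)\).
\end{itemize}
For the normal-cone claim, take an arbitrary \(z\in C\) and write \(z=\sum_i\lambda_i z^i\) with \(\lambda_i\geq0\) and \(\sum_i\lambda_i=1\). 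Then
\[
(g^j)^\top(z-z^j)=\sum_{i\in\mathcal I}\lambda_i\,(g^j)^\top(z^i-z^j)\leq 0,
\]
since every summand is nonpositive by hypothesis. This is exactly the defining inequality of the normal cone at \(z^j\).

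There is no genuine obstacle. The only point needing a moment's care is that the normal-cone inequalities must hold against \emph{every} point of \(C\), not just the data points. Choosing \(C\) as the convex hull, the smallest admissible set, is what makes this work: the pairwise inequalities extend to all of \(C\) by linearity in \(z\).
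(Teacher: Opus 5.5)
Your proof is correct: necessity follows directly from the radius bound and the definition of the normal cone. For sufficiency, $C=\operatorname{conv}\{z^i\}$ is compact, lies in the convex closed ball, and inherits the normal-cone inequalities by linearity in $z$. The paper gives no proof of its own and simply cites Theorem~3.6 of Taylor, Hendrickx, and Glineur; to my knowledge that result is also proved with a convex-hull interpolant, so your argument is essentially the standard one.
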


The bounded-radius assumption is not part of the LMI itself; it is
used only at the end of the proof to bound the initial endpoint
potential.

\subsection{Derivation of the Lyapunov LMI}
\label{subsec:c123-lmi-specification}

\paragraph{Window notation.}
Fix a base index \(k\), an integer \(h\geq0\), and a normalized matrix
satisfying \(\lVert A\rVert_2\leq1\).  Let \(p^i=Ax^i\) and
\(q^i=A^\top y^i\).  For the projection residuals, write
\(\bar\delta^{i+1}=x^i-\eta q^i-x^{i+1}
\in\partial\iota_{\mathcal X}(x^{i+1})\) and
\(\underline\delta^{i+1}=y^i+\eta p^{i+1}-y^{i+1}
\in\partial\iota_{\mathcal Y}(y^{i+1})\), with
\(\bar\delta^0=0\) and \(\underline\delta^0=0\).  These two zero
residuals are padding terms: they make the \(k=0\) window match the
shifted-window notation used below.  For a comparator
\((x^\star,y^\star)\in\mathcal X\times\mathcal Y\), set
\(p^\star=Ax^\star\), \(q^\star=A^\top y^\star\), and
\(\bar\delta^\star=\underline\delta^\star=0\).  Write
\(I^\star_r(k)=\{\star,k,k+1,\dots,k+r\}\).

\paragraph{Potential states.}
The numerical certificate is designed to search over Lyapunov matrices
\(Q^x\in\mathbb S^{2h+6}\) and \(Q^y\in\mathbb S^{2h+7}\); these are
required to be symmetric but need not be positive semidefinite a
priori.  For a fixed comparator, define
\[
H^x_k=\left[x^\star\mid q^\star\mid x^k\mid\bar\delta^k\mid\cdots\mid
\bar\delta^{k+h+1}\mid q^k\mid\cdots\mid q^{k+h}\right],
\]
\[
H^y_k=\left[y^\star\mid p^\star\mid y^k\mid\underline\delta^k\mid
\cdots\mid\underline\delta^{k+h+1}\mid p^k\mid\cdots\mid
p^{k+h+1}\right],
\]
and define \(H^x_{k+1}\) and \(H^y_{k+1}\) by shifting the iterate,
residual, and gradient indices forward by one.  Thus
\(H^x_k\) and \(H^x_{k+1}\) have \(2h+6\) columns and
\(H^y_k,H^y_{k+1}\) have
\(2h+7\); the shifted pair repeats the column pattern of the
unshifted pair with base index \(k+1\).  The potential is
\[
\mathcal V_k
=\mathbf{tr}\left(Q^x(H^x_k)^\top H^x_k\right)
+\mathbf{tr}\left(Q^y(H^y_k)^\top H^y_k\right),
\]
written \(\mathcal V_k(x^\star,y^\star)\) when the dependence on the
comparator columns of \(H^x_k,H^y_k\) matters.

\paragraph{Full-state selectors.}
Let \(N_x=2h+8\) and \(N_y=2h+9\).  Define the full-state data
matrices
\[
F^x_k=\left[x^\star\mid q^\star\mid x^k\mid\bar\delta^k\mid\cdots\mid
\bar\delta^{k+h+2}\mid q^k\mid\cdots\mid q^{k+h+1}\right]
\in\R^{n\times N_x},
\]
and
\[
F^y_k=\left[y^\star\mid p^\star\mid y^k\mid\underline\delta^k\mid
\cdots\mid\underline\delta^{k+h+2}\mid p^k\mid\cdots\mid
p^{k+h+2}\right]\in\R^{m\times N_y}.
\]
Their Gram matrices are \(G^x=(F^x_k)^\top F^x_k\) and
\(G^y=(F^y_k)^\top F^y_k\).  Hatted symbols denote the corresponding
coordinate selectors in these full states: a hatted selector
\(\widehat z\in\R^{N_x}\) represents the actual vector
\(F^x_k\widehat z\) on the \(x\)-side, and analogously on the
\(y\)-side.  For the \(x\)-side, set
\(\widehat x^\star=e^{N_x}_1\), \(\widehat q^\star=e^{N_x}_2\),
\(\widehat x^k=e^{N_x}_3\), \(\widehat{\bar\delta}^\star=0^{N_x}\),
\(\widehat{\bar\delta}^{k+i}=e^{N_x}_{4+i}\) for
\(i\in\{0,\dots,h+2\}\), \(\widehat q^{k+i}=e^{N_x}_{h+7+i}\) for
\(i\in\{0,\dots,h+1\}\), and
\[
\widehat x^{k+i}
=\widehat x^k-\sum_{t=1}^i\widehat{\bar\delta}^{k+t}
-\eta\sum_{t=0}^{i-1}\widehat q^{k+t},
\qquad i\in\{1,\dots,h+2\}.
\]
For the \(y\)-side, set \(\widehat y^\star=e^{N_y}_1\),
\(\widehat p^\star=e^{N_y}_2\), \(\widehat y^k=e^{N_y}_3\),
\(\widehat{\underline\delta}^\star=0^{N_y}\),
\(\widehat{\underline\delta}^{k+i}=e^{N_y}_{4+i}\) for
\(i\in\{0,\dots,h+2\}\), \(\widehat p^{k+i}=e^{N_y}_{h+7+i}\) for
\(i\in\{0,\dots,h+2\}\), and
\[
\widehat y^{k+i}
=\widehat y^k-\sum_{t=1}^i\widehat{\underline\delta}^{k+t}
+\eta\sum_{t=1}^i\widehat p^{k+t},
\qquad i\in\{1,\dots,h+2\}.
\]
Note that, for each potential-state matrix, the hatted version denotes the
selector matrix that is obtained columnwise from the corresponding hatted
vectors, e.g., \(H^x_k=F^x_k\widehat H^x_k\) and
\(H^y_k=F^y_k\widehat H^y_k\), with the same convention for the
shifted matrices. Let us now define
\[
\widehat X=\left[\widehat x^\star\mid\widehat x^k\mid\widehat x^{k+1}
\mid\cdots\mid\widehat x^{k+h+2}\right],\qquad
\widehat P=\left[\widehat p^\star\mid\widehat p^k\mid\widehat p^{k+1}
\mid\cdots\mid\widehat p^{k+h+2}\right],
\]
\[
\widehat Y=\left[\widehat y^\star\mid\widehat y^k\mid\widehat y^{k+1}
\mid\cdots\mid\widehat y^{k+h+1}\right],\qquad
\widehat Q=\left[\widehat q^\star\mid\widehat q^k\mid\widehat q^{k+1}
\mid\cdots\mid\widehat q^{k+h+1}\right].
\]
The symbol \(\widehat Q\) here is a selector matrix for the
\(q\)-vectors and is unrelated to the Lyapunov matrices \(Q^x\) and
\(Q^y\).

\paragraph{Generic condition.}
We now build the algebraic certificate from a generic trace condition.
For symmetric target matrices \(M_x\in\mathbb S^{N_x}\) and
\(M_y\in\mathbb S^{N_y}\), we can write the desired trace condition
\(\mathbf{tr}(M_xG^x)+\mathbf{tr}(M_yG^y)\leq0\) for Gram matrices $G^x, G^y$ associated with an admissible AltGDA window.  The 
worst-case problem computes the largest value of this target over all
AltGDA trajectories in the admissible window and all normalized matrices compatible within this window:
\[
\mathsf p_{\mathrm{op}}(M_x,M_y):=\left(\begin{array}{l}
\underset{\substack{F^x_k\in\R^{n\times N_x},\ F^y_k\in\R^{m\times N_y},\\
A\in\R^{m\times n},\ m,n\in\mathbb N}}{\mbox{maximize}}\;
\mathbf{tr}\!\left(M_x(F^x_k)^\top F^x_k\right)
+\mathbf{tr}\!\left(M_y(F^y_k)^\top F^y_k\right)\\
\textup{subject to}\\
\lVert A\rVert_2\leq1,\\
F^y_k\widehat p^i=AF^x_k\widehat x^i,\quad i\in I^\star_{h+2}(k),\\
F^x_k\widehat q^j=A^\top F^y_k\widehat y^j,\quad j\in I^\star_{h+1}(k),\\
\ip{F^x_k\widehat{\bar\delta}^j}{F^x_k(\widehat x^i-\widehat x^j)}
\leq0,\quad i,j\in I^\star_{h+2}(k),\quad j\notin\{i,\star\},\\
\ip{F^y_k\widehat{\underline\delta}^j}{F^y_k(\widehat y^i-\widehat y^j)}
\leq0,\quad i,j\in I^\star_{h+2}(k),\quad j\notin\{i,\star\}.
\end{array}\right)
\]

In the problem above, the last two constraints model convexity of the sets in consideration. The hatted selectors already encode the AltGDA recursions, so we do not need any 
separate update equations for them.  The first two
constraint lines impose \(p^i=Ax^i\) and \(q^j=A^\top y^j\) and the last
two are the normal-cone inequalities for the two projections (with zero cases omitted).

By \cref{lem:matrix-interpolation} (with \(L=1\)), the explicit matrix
\(A\) can be eliminated: the problem is equivalently written with the
interpolation constraints
\((F^x_k\widehat X)^\top(F^x_k\widehat Q)
=(F^y_k\widehat P)^\top(F^y_k\widehat Y)\),
\((F^y_k\widehat P)^\top(F^y_k\widehat P)
\preceq(F^x_k\widehat X)^\top(F^x_k\widehat X)\), and
\((F^x_k\widehat Q)^\top(F^x_k\widehat Q)
\preceq(F^y_k\widehat Y)^\top(F^y_k\widehat Y)\) in place of the first
three constraint lines.  Passing to
\(G^x=(F^x_k)^\top F^x_k\) and \(G^y=(F^y_k)^\top F^y_k\) and dropping
the implicit rank restrictions gives the homogeneous Gram relaxation
\[
\mathsf{p}(M_{x},M_{y}):=\left(\begin{array}{l}
\underset{\substack{G^{x}\in\mathbb{S}^{N_{x}}_{+}\\
G^{y}\in\mathbb{S}^{N_{y}}_{+}
}
}{}{\mbox{maximize}}\;\mathbf{tr}(M_{x}G^{x})+\mathbf{tr}(M_{y}G^{y})\\
\textup{subject to}\\
\mathbf{tr}\!\left(\boxdot(\widehat{x}^{i},\widehat{q}^{j})G^{x}\right)-\mathbf{tr}\!\left(\boxdot(\widehat{p}^{i},\widehat{y}^{j})G^{y}\right)=0,\quad i\in I^{\star}_{h+2}(k),\quad j\in I^{\star}_{h+1}(k),\\
\widehat{P}^{\top}G^{y}\widehat{P}\preceq\widehat{X}^{\top}G^{x}\widehat{X},\\
\widehat{Q}^{\top}G^{x}\widehat{Q}\preceq\widehat{Y}^{\top}G^{y}\widehat{Y},\\
\mathbf{tr}\!\left(\boxdot(\widehat{\bar{\delta}}^{j},\widehat{x}^{i}-\widehat{x}^{j})G^{x}\right)\leq0,\quad i,j\in I^{\star}_{h+2}(k),\quad j\notin\{i,\star\},\\
\mathbf{tr}\!\left(\boxdot(\widehat{\underline{\delta}}^{j},\widehat{y}^{i}-\widehat{y}^{j})G^{y}\right)\leq0,\quad i,j\in I^{\star}_{h+2}(k),\quad j\notin\{i,\star\},
\end{array}\right)
\]
where the first three constraints use the Lemma \ref{lem:matrix-interpolation} last two constraints use Lemma \ref{lem:pep-set-interp}. The feasible set of this Gram relaxation contains every Gram pair
generated by the preceding vector formulations, so nonpositivity of
\(\mathsf p(M_x,M_y)\) is a sufficient certificate for the generic
trace condition.

\paragraph{Compact dual certificate for a generic condition.}
The maximization problem above is homogeneous, so a zero-objective
dual feasible point certifies \(\mathsf p(M_x,M_y)\leq0\).
Concretely, take PSD blocks \(U\in\mathbb S^{h+4}_+\),
\(V\in\mathbb S^{h+3}_+\), \(Z^x\in\mathbb S^{N_x}_+\), and
\(Z^y\in\mathbb S^{N_y}_+\); free multipliers \(\mu_{i,j}\) for
\(i\in I^\star_{h+2}(k)\) and \(j\in I^\star_{h+1}(k)\); and
nonnegative multipliers \(\lambda^x_{i,j},\lambda^y_{i,j}\geq0\) for
\(i,j\in I^\star_{h+2}(k)\) with \(j\notin\{i,\star\}\).  It is
enough to find such data satisfying
\begin{align*}
 & R^\lambda_x=\sum_{\substack{i,j\in I^\star_{h+2}(k)\\
   j\notin\{i,\star\}}}
   \lambda^x_{i,j}\boxdot\!\left(\widehat{\bar\delta}^j,
   \widehat x^i-\widehat x^j\right),
 \qquad
   R^\lambda_y=\sum_{\substack{i,j\in I^\star_{h+2}(k)\\
   j\notin\{i,\star\}}}
   \lambda^y_{i,j}\boxdot\!\left(\widehat{\underline\delta}^j,
   \widehat y^i-\widehat y^j\right),\\
 & R^\mu_x=\sum_{\substack{i\in I^\star_{h+2}(k)\\
   j\in I^\star_{h+1}(k)}}
   \mu_{i,j}\,\boxdot(\widehat x^i,\widehat q^j),
 \qquad
   R^\mu_y=\sum_{\substack{i\in I^\star_{h+2}(k)\\
   j\in I^\star_{h+1}(k)}}
   \mu_{i,j}\,\boxdot(\widehat p^i,\widehat y^j),\\
 & -M_x+R^\lambda_x+R^\mu_x-\widehat XU\widehat X^\top
   +\widehat QV\widehat Q^\top=Z^x,\\
 & -M_y+R^\lambda_y-R^\mu_y+\widehat PU\widehat P^\top
   -\widehat YV\widehat Y^\top=Z^y.
\end{align*}
We now take trace inner products of the last two identities with any feasible
\(G^x,G^y\).  Note that the operator-equality constraints cancel the \(\mu\)
terms, the normal-cone inequalities make the \(\lambda\) terms
nonpositive, the two matrix inequalities do the same for the \(U,V\)
terms, and, finally, the slack terms satisfy
\(\mathbf{tr}(Z^xG^x)+\mathbf{tr}(Z^yG^y)\geq0\). As a result, we have
\(\mathbf{tr}(M_xG^x)+\mathbf{tr}(M_yG^y)\leq0\) for every feasible
$G^x, G^y$.

\paragraph{Instantiating the three conditions.}
The full \(C_1\)-\(C_2\)-\(C_3\) certificate searches over symmetric
auxiliary matrices \(S^x\in\mathbb S^{N_x}\) and
\(S^y\in\mathbb S^{N_y}\).  Let us now define
\(\mathcal R^S_k=\mathbf{tr}(S^xG^x)+\mathbf{tr}(S^yG^y)\) and
\begin{align*}
 & \mathcal Q^x_0=\widehat H^x_kQ^x(\widehat H^x_k)^\top,\qquad
   \mathcal Q^x_1=\widehat H^x_{k+1}Q^x(\widehat H^x_{k+1})^\top,\\
 & \mathcal Q^y_0=\widehat H^y_kQ^y(\widehat H^y_k)^\top,\qquad
   \mathcal Q^y_1=\widehat H^y_{k+1}Q^y(\widehat H^y_{k+1})^\top.
\end{align*}

The three target pairs can be written as:
\begin{align*}
 & M^{C_1}_x=\mathcal Q^x_1-\mathcal Q^x_0+S^x,\qquad
   M^{C_1}_y=\mathcal Q^y_1-\mathcal Q^y_0+S^y,\\
 & M^{C_2}_x=-\mathcal Q^x_0,\qquad
   M^{C_2}_y=-\mathcal Q^y_0,\\
 & M^{C_3}_x=\boxdot(\widehat q^\star,\widehat x^k)-S^x,\qquad
   M^{C_3}_y=-\boxdot(\widehat p^\star,\widehat y^k)-S^y.
\end{align*}
For each condition \(r\in\{C_1,C_2,C_3\}\), take an independent copy
of the generic dual variables, writing \(R^\lambda_{x,r}\),
\(R^\lambda_{y,r}\), \(R^\mu_{x,r}\), \(R^\mu_{y,r}\) for the generic
sums with \(\lambda^x,\lambda^y,\mu\) replaced by
\(\lambda^{r,x},\lambda^{r,y},\mu^r\).  The \emph{Lyapunov LMI} is
the collection of the six affine identities
\begin{align}
-M^r_x+R^\lambda_{x,r}+R^\mu_{x,r}-\widehat XU^r\widehat X^\top
+\widehat QV^r\widehat Q^\top &= Z^{r,x},
\label{eq:c123-lmi-x}\\
-M^r_y+R^\lambda_{y,r}-R^\mu_{y,r}+\widehat PU^r\widehat P^\top
-\widehat YV^r\widehat Y^\top &= Z^{r,y},
\label{eq:c123-lmi-y}
\end{align}
for \(r\in\{C_1,C_2,C_3\}\), in the variables
\begin{align*}
 & Q^x\in\mathbb S^{2h+6},\ Q^y\in\mathbb S^{2h+7},\
   S^x\in\mathbb S^{N_x},\ S^y\in\mathbb S^{N_y},\\
 & U^r\in\mathbb S^{h+4}_+,\ V^r\in\mathbb S^{h+3}_+,\
   Z^{r,x}\in\mathbb S^{N_x}_+,\ Z^{r,y}\in\mathbb S^{N_y}_+,
   \quad r\in\{C_1,C_2,C_3\},\\
 & \mu^r_{i,j}\in\R,\quad i\in I^\star_{h+2}(k),\
   j\in I^\star_{h+1}(k),\ r\in\{C_1,C_2,C_3\},\\
 & \lambda^{r,x}_{i,j}\geq0,\ \lambda^{r,y}_{i,j}\geq0,\quad
   i,j\in I^\star_{h+2}(k),\ j\notin\{i,\star\},\
   r\in\{C_1,C_2,C_3\}.
\end{align*}
This is the combined feasibility problem solved in the numerical
diagnostics of \cref{sec:lmi}.

\paragraph{Use in the proof.}
The Gram matrices of an actual AltGDA window are feasible for the
relaxation above as the normal-cone and operator-consistency
constraints hold by construction and the interpolation constraints
hold by the necessity direction of \cref{lem:matrix-interpolation}. Thus we can apply the
preceding trace argument here to each
condition \(r\) and its identities
\eqref{eq:c123-lmi-x}--\eqref{eq:c123-lmi-y}, leading to
\(\mathbf{tr}(M^r_xG^x)+\mathbf{tr}(M^r_yG^y)\leq0\) on every actual
window.  Note that, while we state the identities here for fixed index \(k\), 
the selector pattern depends only on offsets from \(k\), so a single
feasible certificate yields these inequalities for every base window.
Finally, the target matrices for the three conditions encode the inequalities
\begin{align*}
 & \mathcal V_{k+1}-\mathcal V_k+\mathcal R^S_k\leq0,\\
 & \mathcal V_k\geq0,\\
 & (q^\star)^\top x^k-(p^\star)^\top y^k\leq\mathcal R^S_k,
\end{align*}
which the proof below combines by telescoping.

\subsection{Proof of the conditional theorem}

\begin{proof}[Proof of \cref{thm:c123-lmi-conditional-rate}]
Let \((x^\star,y^\star)\in\mathcal X\times\mathcal Y\) be an arbitrary
comparator, and set \(p^\star=Ax^\star\) and \(q^\star=A^\top y^\star\).
Let us now fix a base window \(k\). Due to the trace argument of the compact
dual-certificate construction applied to the condition-\(r\)
identities \eqref{eq:c123-lmi-x}--\eqref{eq:c123-lmi-y} on the actual
Gram matrices, we have
\[
\mathbf{tr}(M^r_xG^x)+\mathbf{tr}(M^r_yG^y)\leq0,
\qquad r\in\{C_1,C_2,C_3\}.
\]

For \(r=C_1\), the definition of \(M^{C_1}\) gives
\(\mathcal V_{k+1}(x^\star,y^\star)-\mathcal V_k(x^\star,y^\star)
+\mathcal R^S_k\leq0\).
For \(r=C_3\), the definition of \(M^{C_3}\) gives
\((q^\star)^\top x^k-(p^\star)^\top y^k-\mathcal R^S_k\leq0\).
Combining the two yields the one-step bound
\[
(q^\star)^\top x^k-(p^\star)^\top y^k
\leq
\mathcal V_k(x^\star,y^\star)-\mathcal V_{k+1}(x^\star,y^\star).
\]
Summing from \(k=0\) to \(T-1\) telescopes the potential.  For
\(\bar x_T=\frac1T\sum_{k=0}^{T-1}x^k\) and
\(\bar y_T=\frac1T\sum_{k=0}^{T-1}y^k\), the comparator terms satisfy
\[
\frac1T\sum_{k=0}^{T-1}
\left((q^\star)^\top x^k-(p^\star)^\top y^k\right)
=(y^\star)^\top A\bar x_T-\bar y_T^\top Ax^\star,
\]
and therefore
\[
T\left[(y^\star)^\top A\bar x_T-\bar y_T^\top Ax^\star\right]
\leq
\mathcal V_0(x^\star,y^\star)-\mathcal V_T(x^\star,y^\star).
\]
The \(C_2\) condition applied to the terminal window gives
\(-\mathcal V_T(x^\star,y^\star)\leq0\), so
\[
T\left[(y^\star)^\top A\bar x_T-\bar y_T^\top Ax^\star\right]
\leq
\mathcal V_0(x^\star,y^\star).
\]

For any symmetric matrix \(Q\) and any matrix \(H\),
\(\mathbf{tr}(QH^\top H)\leq\max\{\lambda_{\max}(Q),0\}
\lVert H\rVert_F^2\).  Thus
\[
\mathcal V_0(x^\star,y^\star)
\leq
\varsigma_x^+\lVert H^x_0\rVert_F^2
+\varsigma_y^+\lVert H^y_0\rVert_F^2.
\]
The radius assumptions and \(\lVert A\rVert_2\leq1\) give
\(\lVert p^i\rVert\leq D\) and \(\lVert q^i\rVert\leq D\).  Moreover,
each nonzero projection residual in the initial window has norm at
most \(\eta D\), because
\[
\lVert\bar\delta^{i+1}\rVert
\leq
\mathbf{dist}(x^i-\eta q^i,\mathcal X)
\leq
\eta\lVert q^i\rVert
\leq
\eta D,
\]
and similarly \(\lVert\underline\delta^{i+1}\rVert\leq\eta D\).  The
residual-padding convention sets
\(\bar\delta^0=\underline\delta^0=0\), so only \(h+1\) residual
columns contribute on each side.  Hence
\[
\lVert H^x_0\rVert_F^2\leq(h+4)D^2+(h+1)\eta^2D^2,
\qquad
\lVert H^y_0\rVert_F^2\leq(h+5)D^2+(h+1)\eta^2D^2.
\]
Substituting these bounds gives
\[
\mathcal V_0(x^\star,y^\star)
\leq
D^2\left((h+4)\varsigma_x^++(h+5)\varsigma_y^+
+(h+1)\eta^2(\varsigma_x^++\varsigma_y^+)\right),
\]
and the supremum over comparators completes the proof.
\end{proof}

\subsection{Computational setup}
\label{subsec:c123-lmi-diagnostics}

For each tested memory length \(h\in\{2,3,4,5\}\) and stepsize
\(\eta\), we modeled the final \(C_1, C_2, C_3\) feasibility problem
of \cref{subsec:c123-lmi-specification} using the JuMP package \citep{Lubin2023} in Julia and then solved it using MOSEK at its default tolerances. After each LMI numerical solve in Julia, we also independently computed three diagnostic terms representing quality of the solution: the maximum affine-identity residual over
\eqref{eq:c123-lmi-x}--\eqref{eq:c123-lmi-y}, the minimum eigenvalue
over all condition-specific PSD blocks
\(U^r,V^r,Z^{r,x},Z^{r,y}\), and the minimum of the multipliers
constrained to be nonnegative. A numerical solution is accepted if acceptance conditions described in \cref{sec:lmi} are satisfied.

\section{Computing the Last-Iterate Duality Gap for AltGDA via PEP}\label{sec:appendix-pep}

The finite-horizon PEP formulation for AltGDA presented in this section follows a framework similar to that of
\citet{nan2026convergence} but with the last-iterate gap as its objective. For completeness, we provide the full formulation here, prove the exactness of the upper bound using the counterexample described in \cref{prop:pep-exact-plateau} over compact convex sets, and then present simplex counterexample described in
\cref{ex:simplex-last-iterate}.

\paragraph{Notation.}
The spaces \(\mathbb S^d\) and
\(\mathbb S^d_+\) contain symmetric and symmetric positive
semidefinite \(d\times d\) matrices, respectively; \(M\succeq0\)
means \(M\in\mathbb S^d_+\).  For \(u,v\in\R^d\), set
\(\boxdot(u,v)=\tfrac12(uv^\top+vu^\top)\), so that
\(\mathbf{tr}(G\boxdot(u,v))=u^\top Gv\) for symmetric \(G\).  For a
closed convex set \(C\), \(\iota_C\) is its indicator and
\(\partial\iota_C(z)\) its normal cone at \(z\in C\).

The feasible sets \(\mathcal X\subseteq\R^n\) and
\(\mathcal Y\subseteq\R^m\) are compact convex sets contained in the
Euclidean balls of radii \(R_x\) and \(R_y\) about the origin, and
\(\lVert A\rVert_2\leq L\).  In our numerical experiments, we set \(L=R_x=R_y=1\).

As in Appendix~\ref{sec:appendix-lmi}, \(p^i=Ax^i\) and
\(q^i=A^\top y^i\) are operator images.  The pair \((x^\star,y^\star)\) is an arbitrary comparator
in \(\mathcal X\times\mathcal Y\), not the Goldman--Tucker saddle
point of \cref{thm:main-text}.  The full-horizon Gram blocks
\(G^x,G^y\) are distinct from both the window blocks of
Appendix~\ref{sec:appendix-lmi} and the reflected metrics \(G_x,G_y\)
of Appendix~\ref{sec:appendix-proof}.

The sampled-data index sets are
\[
\mathcal J_T=\{-3,-2,-1,0,1,\dots,T\},
\qquad
\mathcal I_T=\mathcal J_T\setminus\{-1\},
\]
and they differ from the supports \(I,J\) of \cref{sec:setting}.
Indices \(1,\dots,T\) label the iterates, \(0\) the initialization,
\(-2\) the comparator \((x^\star,y^\star)\), and \(-3\) an
index-weighted average.  Index \(-1\) is reserved for the all-ones
direction;
\(e_j\in\R^{2T+11}\) denotes a standard basis vector.

The code labels the player-side columns \((x,v)\) and \((u,y)\).  In
the paper's notation, the code variable \(u\) is the maximizer iterate
\(y\), the code variable \(y=Ax\) is the image \(p\), and
\(v=A^\top u\) is \(q\).  

\subsection{The finite-horizon worst-case problem}
\label{subsec:pep-inner}

The modeled class allows arbitrary \(m,n\), compact convex
\(\mathcal X,\mathcal Y\) with the stated radii, and
\(\lVert A\rVert_2\leq L\).  The AltGDA iterates satisfy \eqref{eq:altgda-main}.
With \(L=R_x=R_y=1\), this class contains every matrix-game
specialization of \eqref{eq:game-main} with
\(\lVert A\rVert_2\leq1\), because each simplex is a compact convex
subset of the unit ball.
We provide the exact compact-convex worst case and the independent simplex example
 in \cref{subsec:pep-exact-witnesses}. The horizon-\(T\) last-iterate gap is
\begin{equation}
\mathcal P_T(\eta)
=\left(\begin{array}{l}
\underset{\substack{\{(x^t,y^t)\}_{0\leq t\leq T},\ (x^\star,y^\star),\\
\mathcal X\subseteq\R^n,\ \mathcal Y\subseteq\R^m,\\
A\in\R^{m\times n},\ m,n\in\mathbb N}}{\mbox{maximize}}\;
(y^\star)^\top Ax^{T}-(y^{T})^\top Ax^\star\\[0.4em]
\textup{subject to}\\
\mathcal X\ \text{compact convex with radius }R_x,\quad
\mathcal Y\ \text{compact convex with radius }R_y,\\
\lVert A\rVert_2\leq L,\\
\{(x^t,y^t)\}_{1\leq t\leq T}\ \text{generated by AltGDA with
stepsize }\eta\text{ from }(x^0,y^0),\\
(x^0,y^0)\in\mathcal X\times\mathcal Y,\quad
(x^\star,y^\star)\in\mathcal X\times\mathcal Y.
\end{array}\right)
\label{eq:pep-inner}
\end{equation}
Note that the objective is the duality gap of the last-iterates \((x^T,y^T)\), as in
\eqref{eq:gap-main}.  The problem is infinite-dimensional at this stage due to the constraints associated with the sets, matrix, and dimensions.

\subsection{Span form of AltGDA}
\label{subsec:pep-span}

The trajectory constraint of \eqref{eq:pep-inner} has an equivalent
description in which the projections become normal-cone elements.

\begin{lemma}[Span form of AltGDA]
\label{lem:pep-span}
Fix \(\eta>0\), closed convex sets \(\mathcal X\subseteq\R^n\) and
\(\mathcal Y\subseteq\R^m\), a matrix \(A\in\R^{m\times n}\), and
\((x^0,y^0)\in\mathcal X\times\mathcal Y\).  A sequence
\(\{(x^t,y^t)\}_{1\leq t\leq T}\) satisfies
\(x^{t}=\Pi_{\mathcal X}(x^{t-1}-\eta A^\top y^{t-1})\) and
\(y^{t}=\Pi_{\mathcal Y}(y^{t-1}+\eta Ax^{t})\) for
\(t\in\{1,\dots,T\}\) if and only if there exist
\(f^t\in\partial\iota_{\mathcal X}(x^t)\) and
\(h^t\in\partial\iota_{\mathcal Y}(y^t)\) for \(t\in\{1,\dots,T\}\)
such that, with \(p^t=Ax^t\) and \(q^t=A^\top y^t\),
\begin{equation}
x^t=x^0-\eta\sum_{j=1}^{t}f^j-\eta\sum_{j=0}^{t-1}q^j,
\qquad
y^t=y^0-\eta\sum_{j=1}^{t}h^j+\eta\sum_{j=1}^{t}p^j,
\qquad t\in\{1,\dots,T\}.
\label{eq:pep-span}
\end{equation}
\end{lemma}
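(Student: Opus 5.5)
The plan is to prove the two directions separately. Both rest on the standard characterization of Euclidean projection onto a closed convex set $C$: $z^+=\Pi_C(w)$ if and only if $z^+\in C$ and $w-z^+\in\partial\iota_C(z^+)$. Because $\partial\iota_C(z^+)$ is a cone, this is the same as saying $(w-z^+)/\eta\in\partial\iota_C(z^+)$ for any fixed $\eta>0$. The span form \eqref{eq:pep-span} is then just the telescoped version of the one-step relations $x^t=x^{t-1}-\eta q^{t-1}-\eta f^t$ and $y^t=y^{t-1}+\eta p^t-\eta h^t$.

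For the forward direction, given an AltGDA trajectory, we define
\[
f^t=\frac{x^{t-1}-\eta q^{t-1}-x^t}{\eta},\qquad h^t=\frac{y^{t-1}+\eta p^t-y^t}{\eta}.
\]
By the projection characterization, with $w=x^{t-1}-\eta A^\top y^{t-1}$ and $w=y^{t-1}+\eta Ax^t$ respectively, we have $f^t\in\partial\iota_{\mathcal X}(x^t)$ and $h^t\in\partial\iota_{\mathcal Y}(y^t)$. This also implicitly uses $x^t\in\mathcal X$ and $y^t\in\mathcal Y$, since the normal cone is only defined (nonempty) at points of the set. The one-step relations then hold by construction. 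Summing them from $1$ to $t$ telescopes to \eqref{eq:pep-span}, with the $q$-sum running over $j=0,\dots,t-1$ and the $p$-sum over $j=1,\dots,t$, reflecting the alternation.

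For the reverse direction, suppose the normal-cone elements exist and \eqref{eq:pep-span} holds. We first recover the one-step relations by differencing consecutive instances of \eqref{eq:pep-span}. For $t=1$ the relation is read off directly, using the empty-sum convention at $t=0$, i.e. the given $x^0,y^0$. The one-step relation gives $x^{t-1}-\eta A^\top y^{t-1}-x^t=\eta f^t\in\partial\iota_{\mathcal X}(x^t)$, and nonemptiness of the normal cone forces $x^t\in\mathcal X$. The projection characterization then yields $x^t=\Pi_{\mathcal X}(x^{t-1}-\eta A^\top y^{t-1})$. The same argument gives $y^t=\Pi_{\mathcal Y}(y^{t-1}+\eta Ax^t)$, since $p^t=Ax^t$ uses the already-established $x^t$.

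The main point of care is the convention that $\partial\iota_C(z)=\varnothing$ for $z\notin C$. This is what makes membership $f^t\in\partial\iota_{\mathcal X}(x^t)$ encode feasibility, so it should be stated explicitly. The rest is routine bookkeeping of the index ranges in the two sums.
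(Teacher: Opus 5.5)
Your proof is correct and follows essentially the same route as the paper: you use the projection characterization $w-z\in\partial\iota_C(z)$, rescale by $\eta$ via the cone property, and then telescope (forward) or difference (reverse) the one-step relations. Your explicit remark that $\partial\iota_C(z)=\varnothing$ for $z\notin C$, so that the normal-cone membership enforces feasibility in the reverse direction, makes precise a point the paper leaves implicit.
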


\begin{proof}
For a closed convex set \(C\), a point \(z\in C\), and any
\(w\in\R^d\), \(z=\Pi_C(w)\) if and only if
\(w-z\in\partial\iota_C(z)\)
\citep[Proposition~6.47]{bauschke2017convex}.  The \(x\)-update is
therefore equivalent to
\(x^{t-1}-\eta q^{t-1}-x^t\in\partial\iota_{\mathcal X}(x^t)\).
Because the normal cone is a cone and \(\eta>0\), this membership
holds exactly when the residual equals \(\eta f^t\) for some
\(f^t\in\partial\iota_{\mathcal X}(x^t)\), or equivalently,
\(x^t=x^{t-1}-\eta q^{t-1}-\eta f^t\).  Likewise, the \(y\)-update,
which uses \(p^t=Ax^t\), is equivalent to
\(y^t=y^{t-1}+\eta p^t-\eta h^t\) for some
\(h^t\in\partial\iota_{\mathcal Y}(y^t)\).  By summing these one-step
identities, we have \eqref{eq:pep-span}.  On the other hand, by differencing
\eqref{eq:pep-span} we recover the one-step identities, and using the same
characterization we can convert them back into the two projection updates.
\end{proof}

\subsection{Interpolation argument}
\label{subsec:pep-interp}

The formulation accesses \(\mathcal X,\mathcal Y\), and \(A\) only
through finitely many projections and matrix--vector products.
Using \cref{lem:pep-span}, \cref{lem:pep-set-interp}, and \cref{lem:matrix-interpolation}, we can characterize
exactly which finite collections of first-order data are realizable.

At the indices \(\mathcal I_T\), the formulation samples the iterates,
initialization, comparator
\((x^{-2},y^{-2})=(x^\star,y^\star)\), and index-weighted averages
\begin{equation}
x^{-3}=\frac{\sum_{j=1}^{T}j\,x^j}{\sum_{j=1}^{T}j},
\qquad
y^{-3}=\frac{\sum_{j=1}^{T}j\,y^j}{\sum_{j=1}^{T}j},
\label{eq:pep-wavg}
\end{equation}
which lie in \(\mathcal X\times\mathcal Y\) by convexity.  Each sample
has normal-cone data \(f^i,h^i\) and images \(p^i=Ax^i\),
\(q^i=A^\top y^i\).  Choosing zero normal-cone elements makes the
samples at \(i\in\{-3,-2,0\}\) valid.  The reserved index \(-1\)
adds one formal sample per side only for the operator families.

\subsection{Finite-dimensional reformulation}
\label{subsec:pep-finite}

Collect the sampled points and images over the full index set, with
columns ordered \(-3,-2,-1,0,1,\dots,T\):
\begin{align*}
X&=[x^i]_{i\in\mathcal J_T}\in\R^{n\times(T+4)},
&P&=[p^i]_{i\in\mathcal J_T}\in\R^{m\times(T+4)},\\
Y&=[y^i]_{i\in\mathcal J_T}\in\R^{m\times(T+4)},
&Q&=[q^i]_{i\in\mathcal J_T}\in\R^{n\times(T+4)}.
\end{align*}
In the sampled data the objective reads
\((q^{-2})^\top x^T-(y^T)^\top p^{-2}\).
Replacing the trajectory and class constraints of
\eqref{eq:pep-inner} by the sampled conditions produces a
finite-dimensional problem with the same optimal value:
\begin{equation}
\mathcal P_{T}(\eta)=\left(\begin{array}{l}
\underset{\substack{\{(x^i,f^i,q^i)\}_{i\in\mathcal J_T}\subseteq\R^n,\\
\{(y^i,h^i,p^i)\}_{i\in\mathcal J_T}\subseteq\R^m,\\
m,n\in\mathbb N}}{\mbox{maximize}}\;
(q^{-2})^\top x^{T}-(y^{T})^\top p^{-2}\\[0.4em]
\textup{subject to}\\
(f^j)^\top(x^i-x^j)\leq0,\quad
(h^j)^\top(y^i-y^j)\leq0,\quad i,j\in\mathcal I_T,\ i\neq j,\\
\lVert x^i\rVert_2^2\leq R_x^2,\quad
\lVert y^i\rVert_2^2\leq R_y^2,\quad i\in\mathcal I_T,\\
x^i=x^0-\eta\sum_{j=1}^{i}f^j-\eta\sum_{j=0}^{i-1}q^j,\quad
i\in\{1,\dots,T\},\\
y^i=y^0-\eta\sum_{j=1}^{i}h^j+\eta\sum_{j=1}^{i}p^j,\quad
i\in\{1,\dots,T\},\\
x^{-3}\ \text{and}\ y^{-3}\ \text{satisfy \eqref{eq:pep-wavg}},\\
(x^i)^\top q^j=(p^i)^\top y^j,\quad i,j\in\mathcal J_T,\\
L^2X^\top X-P^\top P\succeq0,\qquad
L^2Y^\top Y-Q^\top Q\succeq0.
\end{array}\right)
\label{eq:pep-nl}
\end{equation}
Every feasible point of \eqref{eq:pep-inner} corresponds to a feasible
point of \eqref{eq:pep-nl} with the same objective value.  The
constraints associated with AltGDA updates hold with the normal-cone elements supplied by
\cref{lem:pep-span}, the constraints modeling convex compact sets hold by
\cref{lem:pep-set-interp}, and the  operator constraint associated with modeling the matrix $A$ with $\|A\|_2 \leq L$ holds by
\cref{lem:matrix-interpolation}.  The samples at the reserved index
\(-1\) may be set to zero without violating any displayed constraint.

Conversely, consider any feasible point of \eqref{eq:pep-nl}.
By \cref{lem:pep-set-interp}, there are compact convex sets
\(\mathcal X\) and \(\mathcal Y\) of radii \(R_x\) and \(R_y\) that
contain the sampled points with the sampled normal-cone elements.
By \cref{lem:matrix-interpolation}, there is a matrix \(A\) with
\(\lVert A\rVert_2\leq L\) that matches every image pair, including
the samples at indices \(-1\) and \(-3\).
Finally, \cref{lem:pep-span} identifies the recursions with the AltGDA
projections while preserving the objective value.

Applying \(A\) and \(A^\top\) to the affine
combinations \eqref{eq:pep-wavg} shows that the realized data
satisfy \(p^{-3}=\sum_{j=1}^{T}j\,p^j/\sum_{j=1}^{T}j\) and
\(q^{-3}=\sum_{j=1}^{T}j\,q^j/\sum_{j=1}^{T}j\), so the averaged
sample is the weighted average of the corresponding trajectory.
Thus, the two optimal values are the same, and the equality in
\eqref{eq:pep-nl} is exact.

\subsection{Gram lift and the solved semidefinite program}
\label{subsec:pep-gram}

Stack the variables of \eqref{eq:pep-nl} into one data matrix per
player side,
\begin{equation}
\begin{aligned}
F^x&=\left[x^{-2}\mid x^{-1}\mid x^{0}\mid
f^{-3}\mid\cdots\mid f^{T}\mid
q^{-3}\mid\cdots\mid q^{T}\right]\in\R^{n\times(2T+11)},\\
F^y&=\left[y^{-2}\mid y^{-1}\mid y^{0}\mid
h^{-3}\mid\cdots\mid h^{T}\mid
p^{-3}\mid\cdots\mid p^{T}\right]\in\R^{m\times(2T+11)},
\end{aligned}
\label{eq:pep-data}
\end{equation}
in which the normal-cone block and the image block each run over
\(\mathcal J_T\) in the order \(-3,-2,-1,0,1,\dots,T\): three point
columns, \(T+4\) normal-cone columns, and \(T+4\) image columns per
side.  The Gram blocks are \(G^x=(F^x)^\top F^x\) and
\(G^y=(F^y)^\top F^y\), both in \(\mathbb S^{2T+11}_+\).  The Gram
matrices \(G^x\) and \(G^y\) arising this way have ranks at most \(n\) and
\(m\), respectively.
However, because in our worst-case optimization problem, we are also optimizing over $m,n$, we can drop the rank constraint without changing the optimal objective value.  Hence, the Gram description and optimizing over $m,n$ allows us to remove the
ambient dimension $m,n$ without changing the optimal value.

By the hatted symbols we denote coordinate selectors, where a selector
\(\widehat z\in\R^{2T+11}\) represents \(F^x\widehat z\) on the
\(x\)-side and \(F^y\widehat z\) on the \(y\)-side.  The columns of
\eqref{eq:pep-data} use basis selectors, while the remaining points use
affine selectors that encode the recursion and averaging:
\begin{equation}
\begin{aligned}
&\widehat x^{-2}=e_1,\quad
\widehat x^{-1}=e_2,\quad
\widehat x^{0}=e_3,\quad
\widehat f^{i}=e_{7+i},\quad
\widehat q^{i}=e_{T+11+i},\quad i\in\mathcal J_T,\\
&\widehat x^{i}=\widehat x^{0}
-\eta\sum_{j=1}^{i}\widehat f^{j}
-\eta\sum_{j=0}^{i-1}\widehat q^{j},
\quad i\in\{1,\dots,T\},
\qquad
\widehat x^{-3}
=\frac{\sum_{j=1}^{T}j\,\widehat x^{j}}{\sum_{j=1}^{T}j},
\end{aligned}
\label{eq:pep-selectors-x}
\end{equation}
\begin{equation}
\begin{aligned}
&\widehat y^{-2}=e_1,\quad
\widehat y^{-1}=e_2,\quad
\widehat y^{0}=e_3,\quad
\widehat h^{i}=e_{7+i},\quad
\widehat p^{i}=e_{T+11+i},\quad i\in\mathcal J_T,\\
&\widehat y^{i}=\widehat y^{0}
-\eta\sum_{j=1}^{i}\widehat h^{j}
+\eta\sum_{j=1}^{i}\widehat p^{j},
\quad i\in\{1,\dots,T\},
\qquad
\widehat y^{-3}
=\frac{\sum_{j=1}^{T}j\,\widehat y^{j}}{\sum_{j=1}^{T}j}.
\end{aligned}
\label{eq:pep-selectors-y}
\end{equation}
When the recursion and averaging constraints of \eqref{eq:pep-nl}
hold, \(x^i=F^x\widehat x^{i}\), \(f^i=F^x\widehat f^{i}\), and
\(q^i=F^x\widehat q^{i}\) for every \(i\in\mathcal J_T\), and
likewise \(y^i=F^y\widehat y^{i}\), \(h^i=F^y\widehat h^{i}\), and
\(p^i=F^y\widehat p^{i}\); every constraint and objective of
\eqref{eq:pep-nl} then becomes a trace or semidefinite condition on
\((G^x,G^y)\).  Define the selector matrices with columns ordered
\(-3,-2,-1,0,1,\dots,T\):
\[
\widehat X=[\widehat x^{i}]_{i\in\mathcal J_T},\quad
\widehat P=[\widehat p^{i}]_{i\in\mathcal J_T},\quad
\widehat Y=[\widehat y^{i}]_{i\in\mathcal J_T},\quad
\widehat Q=[\widehat q^{i}]_{i\in\mathcal J_T},
\]
all in \(\R^{(2T+11)\times(T+4)}\), so that \(X=F^x\widehat X\),
\(Q=F^x\widehat Q\), \(Y=F^y\widehat Y\), and \(P=F^y\widehat P\).
The resulting semidefinite program is the one solved in the study,
with the objective \(\mathcal M_G^{\mathrm{lst}}\) written out in
\cref{subsec:pep-objectives}:
\begin{equation}
\mathcal P_{T}(\eta)=\left(\begin{array}{l}
\underset{G^x,\ G^y\in\mathbb S^{2T+11}}{\mbox{maximize}}\;
\mathcal M_G^{\mathrm{lst}}\\[0.3em]
\textup{subject to}\\
\mathbf{tr}\!\left(G^x\boxdot(\widehat f^{j},
\widehat x^{i}-\widehat x^{j})\right)\leq0,\quad
i,j\in\mathcal I_T,\ i\neq j,\\
\mathbf{tr}\!\left(G^y\boxdot(\widehat h^{j},
\widehat y^{i}-\widehat y^{j})\right)\leq0,\quad
i,j\in\mathcal I_T,\ i\neq j,\\
\mathbf{tr}\!\left(G^x\boxdot(\widehat x^{i},\widehat q^{j})\right)
=\mathbf{tr}\!\left(G^y\boxdot(\widehat p^{i},\widehat y^{j})\right),
\quad i,j\in\mathcal J_T,\\
L^2\widehat X^\top G^x\widehat X
-\widehat P^\top G^y\widehat P\succeq0,\\
L^2\widehat Y^\top G^y\widehat Y
-\widehat Q^\top G^x\widehat Q\succeq0,\\
\mathbf{tr}\!\left(G^x\boxdot(\widehat x^{i},\widehat x^{i})\right)
\leq R_x^2,\quad i\in\mathcal I_T,\\
\mathbf{tr}\!\left(G^y\boxdot(\widehat y^{i},\widehat y^{i})\right)
\leq R_y^2,\quad i\in\mathcal I_T,\\
G^x\succeq0,\qquad G^y\succeq0
\end{array}\right).
\label{eq:pep-sdp}
\end{equation}

\paragraph{Equivalence between \eqref{eq:pep-inner}  and \eqref{eq:pep-sdp}}

Now we argue why \eqref{eq:pep-inner} and \eqref{eq:pep-sdp} are equivalent to each other with same optimal objective value. First note that every feasible point of \eqref{eq:pep-nl} maps to the pair
\(G^x=(F^x)^\top F^x\), \(G^y=(F^y)^\top F^y\), which is feasible for
\eqref{eq:pep-sdp} with the same objective value; the two objectives
correspond through the operator equality at \((i,j)=(T,-2)\).

On the other hand, factorizing a feasible pair of \eqref{eq:pep-sdp}
produces interpolation data that satisfy every constraint of \eqref{eq:pep-nl}. To see that, first note that
the algorithm update equations and averaging constraints hold because the point selectors are the stated affine combinations, and the remaining
families are the displayed trace and semidefinite conditions. As a result,  the
optimal value of \eqref{eq:pep-sdp} is equal to the modeled
worst case at every horizon. Hence,  \eqref{eq:pep-sdp} is equivalent to \eqref{eq:pep-inner} and our reformulation is tight.

Finally, we make some comments regarding the structure and patterns of  \eqref{eq:pep-sdp}. Note that neither \(m\) nor \(n\) appears in \eqref{eq:pep-sdp}. Also, the
normal-cone and radius families run over \(\mathcal I_T\), and the
three operator families run over all of \(\mathcal J_T\), including
the reserved index.  On each player side, the optimization problem
has \((T+3)(T+2)\) normal-cone inequalities and \(T+3\) radius
bounds; the sides share \((T+4)^2\) equalities, two semidefinite
constraints of order \(T+4\), and the two Gram blocks of order
\(2T+11\), which is \(71\) at the largest horizon \(T=30\). The averaging of the iterates is enforced directly through the selectors:
\(\widehat x^{-3}\) and \(\widehat y^{-3}\) are affine combinations of iterate
selectors, while \(\widehat p^{-3}\) and \(\widehat q^{-3}\) are
basis columns.  Finally, we note that
\(p^{-3}=\sum_{j=1}^{T}j\,p^j/\sum_{j=1}^{T}j\) and
\(q^{-3}=\sum_{j=1}^{T}j\,q^j/\sum_{j=1}^{T}j\) need no constraint
of their own as the matrix supplied by \cref{lem:matrix-interpolation}
enforces those conditions directly.

\subsection{The performance measure}
\label{subsec:pep-objectives}

The measure enters \eqref{eq:pep-sdp} through \(G^y\):
\begin{equation}
\mathcal M_G^{\mathrm{lst}}
=\mathbf{tr}\bigl(G^y\boxdot(\widehat p^{T},
\widehat y^{-2})\bigr)
-\mathbf{tr}\bigl(G^y\boxdot(\widehat p^{-2},
\widehat y^{T})\bigr).
\label{eq:pep-objective}
\end{equation}
At Gram pairs arising from data as in \eqref{eq:pep-data}, this
evaluates to \((y^\star)^\top Ax^T-(y^T)^\top Ax^\star\), the
objective of \eqref{eq:pep-inner}.  The exact optimal value under this
objective is \(\mathcal P_T(\eta)\), which equals \(2\) when
\(L=R_x=R_y=1\) by \cref{prop:pep-exact-plateau}.
The numerical values in \cref{fig:pep-lastiterate} confirm this value
under the solver tolerances in \cref{subsec:pep-protocol}.

\subsection{Numerical PEP Setup}
\label{subsec:pep-protocol}

With \(L=R_x=R_y=1\), the PEP search evaluates each horizon
\(T=5,\ldots,30\) at \(25\) logarithmically spaced 
stepsizes \(\eta_c=1/(\eta L)\) in \([0.5,64]\), corresponding to
\(\eta\in[1/64,2]\). 

For every numerical experiment for this section, we have used MOSEK \(11.0.29\) through
JuMP \citet{Lubin2023} with
\texttt{MSK\_DPAR\_INTPNT\_CO\_TOL\_PFEAS}
\(=\)~\texttt{MSK\_DPAR\_INTPNT\_CO\_TOL\_DFEAS} \(=10^{-4}\) and
automatic thread selection.  A solve is accepted only with
termination status \texttt{OPTIMAL} and primal and dual solution statuses
\texttt{FEASIBLE\_POINT}; no solve in our numerical study was rejected.

\subsection{Worst-case examples for last-iterate duality gap}
\label{subsec:pep-exact-witnesses}

\begin{proof}[Proof of \cref{prop:pep-exact-plateau}]
For any feasible instance of \eqref{eq:pep-inner}, any comparator point
\((x^\star,y^\star)\), and any \(T\geq0\), the norm and radius bounds give us:
\[
(y^\star)^\top Ax^T-(y^T)^\top Ax^\star
\leq \lVert y^\star\rVert\lVert A\rVert_2\lVert x^T\rVert
   +\lVert y^T\rVert\lVert A\rVert_2\lVert x^\star\rVert
\leq 2.
\]
Taking the maximum over comparators and instances proves the upper
bound.  To attain this upper bound, let \(A=I_2\) and let both feasible sets be the
closed Euclidean unit ball in \(\R^2\).  Then their duality gap satisfies
\[
\Gap(x,y)=\max_{\lVert v\rVert\leq1}v^\top x
          -\min_{\lVert u\rVert\leq1}y^\top u
        =\lVert x\rVert+\lVert y\rVert.
\]
Now consider  \(0<\eta\leq2\), and initialize
\[
y^0=(1,0), \text{ and }
x^0=\left(\frac\eta2,\sqrt{1-\frac{\eta^2}{4}}\right).
\]
These points satisfy \(\lVert x^0\rVert=\lVert y^0\rVert=1\) and
\(\langle x^0,y^0\rangle=\eta/2\).  Suppose now the same identities hold
at an iterate \((x,y)\), and form the unprojected candidates
\(x'=x-\eta y\) and \(y'=y+\eta x'\).  Direct calculation gives us:
\[
\begin{aligned}
\lVert x'\rVert^2
 &=1-2\eta(\eta/2)+\eta^2=1,
 &\langle x',y\rangle&=-\eta/2,\\
\lVert y'\rVert^2
 &=1+2\eta(-\eta/2)+\eta^2=1,
 &\langle x',y'\rangle&=-\eta/2+\eta=\eta/2.
\end{aligned}
\]
Both candidates lie in their unit balls, so the projections leave them
unchanged.  Using induction we can easily prove the identities and hence gap \(2\) at
every iterate, including the endpoint \(\eta=2\).

Now consider the stepsize range \(\eta>2\), and use the same matrix and sets with
\(x^0=y^0=(1,0)\).  If \(x^t=y^t=e\), where
\(e\in\{(1,0),(-1,0)\}\), the first unprojected candidate is
\((1-\eta)e\), whose projection is \(-e\).  The second candidate is
then also \((1-\eta)e\), with projection \(-e\).  Thus both players are
essentially alternating between \((1,0)\) and \((-1,0)\), retaining gap \(2\).
These attain the upper bound at initialization and every
subsequent horizon.  The first initialization depends on \(\eta\),
as allowed in \eqref{eq:pep-inner}, while both constructions are using fixed
two-dimensional sets and a fixed matrix.  Therefore
\(\mathcal P_T(\eta)=2\) for every \(\eta>0\) and \(T\geq1\), and
\(\inf_{\eta>0}\mathcal P_T(\eta)=2\).
\end{proof}

\begin{proof}[Proof of \cref{ex:simplex-last-iterate}]
Write the simplex iterates as
\[
x^t=(1/2+a_t,1/2-a_t),\qquad
y^t=(1/2+b_t,1/2-b_t),
\]
where \(a_t,b_t\in[-1/2,1/2]\).  The prescribed initialization gives
\(a_0=1/4\), \(b_0=0\) for every stepsize.  Since
\(A^\top y^t=(2b_t,-2b_t)\) and
\(Ax^{t+1}=(2a_{t+1},-2a_{t+1})\), the updates reduce to
\[
a_{t+1}=\operatorname{clip}(a_t-2\eta b_t),\qquad
b_{t+1}=\operatorname{clip}(b_t+2\eta a_{t+1}),
\]
where \(\operatorname{clip}(u)=\min\{1/2,\max\{-1/2,u\}\}\).
Note that each unprojected vector has coordinate sum one. As a result the simplex projection clips the corresponding centered coordinate to
\([-1/2,1/2]\).  Also, maximizing and minimizing the payoff coordinates gives
\(\Gap(x^t,y^t)=2(|a_t|+|b_t|)\).

First, let us consider \(0<\eta<1\), and define
\(E_t=a_t^2+b_t^2-2\eta a_tb_t\).  If neither update is clipped,
substitution in the unprojected recurrence gives \(E_{t+1}=E_t\).
If at least one update is clipped, the new point has
\(|a_{t+1}|=1/2\) or \(|b_{t+1}|=1/2\).  Completing the square gives
\[
E_{t+1}=(b_{t+1}-\eta a_{t+1})^2
       +(1-\eta^2)a_{t+1}^2
\geq\frac{1-\eta^2}{4}
\]
when \(|a_{t+1}|=1/2\); interchanging the two coordinates gives the
same bound when \(|b_{t+1}|=1/2\).  Starting from \(E_0=1/16\),
induction therefore yields
\[
E_t\geq\min\left\{\frac1{16},\frac{1-\eta^2}{4}\right\}
\qquad(t\geq0).
\]
Since \(\eta<1\), we also have
\(E_t\leq(|a_t|+|b_t|)^2\).  Thus
\[
\Gap(x^t,y^t)\geq2\sqrt{E_t}
\geq\min\left\{\frac12,\sqrt{1-\eta^2}\right\}>0
\qquad(t\geq0).
\]

For \(\eta\geq1\), the same initialization produces
\[
(a_0,b_0)=(1/4,0),\qquad
(a_1,b_1)=(1/4,1/2),\qquad
(a_2,b_2)=(-1/2,-1/2).
\]
The clipped recurrence then alternates between
\((-1/2,-1/2)\) and \((1/2,1/2)\).  The gaps are consequently
\(1/2\) at \(t=0\), \(3/2\) at \(t=1\), and \(2\) for every
\(t\geq2\).  These identities include the endpoint \(\eta=1\).
\end{proof}

\begin{remark}[Averaged convergence on the same trajectory]
\label{rem:simplex-uniform-smallsteps}
Note that for every fixed \(0<\eta\leq1/2\), the original initialization
\(x^0=(3/4,1/4)\), \(y^0=(1/2,1/2)\) yields last-iterate gaps at
least \(1/2\) implying non-convergence, while its uniform averages converge.
To see the latter directly, we first define the unprojected recurrence
\(a_{t+1}=a_t-2\eta b_t\),
\(b_{t+1}=b_t+2\eta a_{t+1}\) from \(a_0=1/4\), \(b_0=0\).
It preserves \(E_t=1/16\), and
\(E_t\geq(1-\eta)(a_t^2+b_t^2)\) gives
\[
|a_t|,|b_t|\leq\frac{1}{4\sqrt{1-\eta}}
\leq\frac{1}{2\sqrt2}<\frac12.
\]
Using induction we can thus show that every projection leaves its candidate
unchanged thus corresponding to an admissible AltGDA trajectory.  Telescoping the
two scalar updates over the averaging indices gives us:
\[
\frac1T\sum_{t=1}^T a_t=\frac{b_T-b_0}{2\eta T},\qquad
\frac1T\sum_{t=1}^T b_t=\frac{a_1-a_{T+1}}{2\eta T}.
\]
Using \(b_0=0\), \(a_1=1/4\), and the coordinate bounds gives us:
\[
\Gap(\overline x^T,\overline y^T)
=\frac{|b_T-b_0|+|a_1-a_{T+1}|}{\eta T}
\leq\frac{5}{4\eta T}.
\]
The last-iterate lower bound \(1/2\) is uniform in both the horizon
and \(\eta\in(0,1/2]\), with the matrix and initialization fixed.
It therefore also applies when a different constant stepsize in this
range is selected for each horizon.
The matrix has eigenvalues \(0\) and \(2\), so \(L=2\), and its
unique saddle point \(x^\star=y^\star=(1/2,1/2)\) has full supports,
giving \(\delta=1/2\) in \eqref{eq:delta-main}.
The general ergodic theorem, \cref{thm:main-text}, also applies for
\(0<\eta\leq1/(8\sqrt2)\) and the larger range \((0,1/2]\) proved
here is specific to this example.
\end{remark}

\end{document}